\newcommand{\hta}[1]{\hypertarget{#1}{\textrm (#1)}}
\newcommand{\hl}[1]{\hyperlink{#1}{\textrm (#1)}}
\newcommand{\hli}[1]{\hyperlink{#1}{\emph{\textrm (#1)}}}
\newcommand{\Ref}[1]{\ref{#1}}
\newcommand{\Refi}[1]{\emph{\ref{#1}}}
\newtheorem{theorem}{Theorem}[section]
\newtheorem{lemma}[theorem]{Lemma}
\newtheorem{proposition}[theorem]{Proposition}
\newtheorem{corollary}[theorem]{Corollary}
\newtheorem{definition}[theorem]{Definition}
\newcommand{\Fs}{\ensuremath{\mathscr{F}}}
\newcommand{\Ys}{\ensuremath{\mathscr{Y}}}
\newcommand{\Ds}{\ensuremath{\mathscr{D}}}
\newcommand{\Ps}{\ensuremath{\mathscr{P}}}
\newcommand{\Cs}{\ensuremath{\mathscr{C}}}
\newcommand{\Xs}{\ensuremath{\mathscr{X}}}
\newcommand{\Gs}{\ensuremath{\mathscr{G}}}
\renewcommand{\Ps}{\ensuremath{\mathscr{P}}}
\newcommand{\Ts}{\ensuremath{\mathscr{T}}}
\newcommand{\Gbb}{\ensuremath{\mathbb{G}}}
\newcommand{\Tbb}{\ensuremath{\mathbb{T}}}
\newcommand{\Nbb}{\ensuremath{\mathbb{N}}}
\newcommand{\Zbb}{\ensuremath{\mathbb{Z}}}
\newcommand{\ab}{\ensuremath{\mathrm{ab}}}
\newcommand{\Ab}{\ensuremath{\mathrm{Ab}}}
\newcommand{\Gp}{\ensuremath{\mathrm{Gp}}}
\newcommand{\Ec}{\ensuremath{\mathcal{E}}}
\newcommand{\Fc}{\ensuremath{\mathcal{F}}}
\newcommand{\Pc}{\ensuremath{\mathcal{P}}}
\newcommand{\Gc}{\ensuremath{\mathcal{G}}}
\newcommand{\Mr}{\ensuremath{\it \mathcal{M}}}
\newcommand{\Qc}{\ensuremath{\mathcal{Q}}}
\newcommand{\Tc}{\ensuremath{\mathcal{T}}}
\newcommand{\Arr}{\ensuremath{\mathrm{Arr}}}
\newcommand{\Dom}{\ensuremath{\mathrm{Dom}}}
\newcommand{\Cod}{\ensuremath{\mathrm{Cod}}}
\newcommand{\br}{\ensuremath{{\bf \mathrm{r}}}}
\newcommand{\Eq}{\ensuremath{\mathrm{Eq}}}
\newcommand{\Ker}{\ensuremath{\mathrm{Ker}}}
\renewcommand{\ker}{\ensuremath{\mathrm{ker}}}
\newcommand{\Coker}{\ensuremath{\mathrm{Coker}}}
\newcommand{\coker}{\ensuremath{\mathrm{coker}}}
\newcommand{\Haus}{\ensuremath{\mathrm{Haus}}}
\newcommand{\Prof}{\ensuremath{\mathrm{Prof}}}
\newcommand{\Mono}{\ensuremath{\mathrm{Mono}}}
\newcommand{\RegEpi}{\ensuremath{\mathrm{RegEpi}}}
\newcommand{\rf}{\ensuremath{\mathrm{r}}}
\newcommand{\ConnComp}{\ensuremath{\mathrm{ConnComp}}}
\newcommand{\Set}{\ensuremath{\mathrm{Set}}}
\newcommand{\Top}{\ensuremath{\mathrm{Top}}}
\newcommand{\HComp}{\ensuremath{\mathrm{HComp}}}
\newcommand{\Ext}{\ensuremath{\mathrm{Ext}}}
\newcommand{\NExt}{\ensuremath{\mathrm{NExt}}}
\newcommand{\Triv}{\ensuremath{\mathrm{TExt}}}
\newcommand{\Split}[1]{\ensuremath{{\mathrm{Split}(#1)}}}
\newcommand{\Sp}{\ensuremath{\mathrm{Split}}}
\newcommand{\bt}{\ensuremath{\mathrm{t}}}
\newcommand{\Gal}{\ensuremath{\mathrm{Gal}}}
\newcommand{\Pres}{\ensuremath{\mathrm{Pres}}}
\newcommand{\ad}[4]{\xymatrix{
#1 \ar@/^2ex/[r]^-{#3}  \ar@{}[r]|-\bot &#2 \ar@/^2ex/[l]^-{#4}
}
}
\newcommand{\gal}[3]{\xymatrix{
#1 \ar@^{->}[rr]^-{#2}&& #3
}
}
\newcommand{\catequivalence}[4]{\xymatrix{
#1 \ar@/^2ex/[r]^-{#3}  \ar@{}[r]|-\cong &#2 \ar@/^2ex/[l]^-{#4}
}
}
\newcommand{\Pb}{\overline{\; \rule{0ex}{1,6ex} \cdot \; }}
\newbox\pullbackbox
\def\pullback{\copy\pullbackbox}
\newbox\pushoutbox
\begin{document}

\author{M.~Duckerts-Antoine}
\address{Centre for Mathematics, University of Coimbra, Departamento de Matem\'atica, Apartado 3008, 3001-501 Coimbra, Portugal}
\email{mathieud@mat.uc.pt}

\thanks{This work was partially supported by the Universit\'e catholique de Louvain, by the FCT - Funda\c{c}\~ao para a Ci\^encia e a Tecnologia - under the grant number SFRH/BPD/98155/2013, and by the Centre for Mathematics of the University of Coimbra -- UID/MAT/00324/2013, funded by the Portuguese
Government through FCT/MEC and co-funded by the European Regional Development Fund through the Partnership Agreement PT2020. This article is partly based on our thesis \cite{Mathieu} whose defense was held at the Universit\'e catholique de Louvain.}

\title[Fundamental group functors in descent-exact homological cat.]{Fundamental group functors in\\descent-exact homological categories}

\begin{abstract} We study the notion of fundamental group in the framework of descent-exact homological categories. This setting is sufficiently wide to include several categories of ``algebraic'' nature such as the almost abelian categories, the semi-abelian categories, and the categories of topological semi-abelian algebras. For many adjunctions in this context, the fundamental groups  are described by generalised Brown-Ellis-Hopf formulae for the integral homology of groups. \\
Keywords: Galois group, Hopf formula, Kan extension, homological category.
\end{abstract}

\maketitle

\section{Introduction}

In \cite{DuckertsEveraertGran}, we pursued the study of the categorical notion of fundamental group introduced in \cite{Janelidze:Hopf} and provided a generalised version of the Hopf formula for the description of the fundamental group within the semi-abelian context \cite{Janelidze-Marki-Tholen}. Examples of semi-abelian categories are the categories of groups, Lie algebras, compact Hausdorff groups, crossed modules, and similar non-abelian structures. In the present work, we define and study higher fundamental groups within the wider context of descent-exact homological categories \cite{BB}. This allows us to cover a lot of other categories, let us just mention here the categories of topological groups, locally compact abelian groups, and Banach spaces (and bounded linear maps).

In order to understand what is a descent-exact homological category, let us first recall the well-known Tierney's description of abelian categories:
\[
\text{abelian =  additive + Barr-exact} 
\]
Let us also recall that a category is Barr-exact \cite{Barr} when it is regular (i.e. is finitely complete and has coequalizers of every kernel pair) and every internal equivalence is the kernel pair of some morphism. It turns out that in a Barr-exact category $\Cs$, every regular epimorphism $f \colon E \to B$ is effective for descent, which means that the pullback functor $p^* \colon (\Cs \downarrow B)\to (\Cs \downarrow E)$ is monadic. This can be viewed as a form of exactness condition on a category, that we call here descent-exactness  (see \cite{Garner-Lack} for a general notion of exactness). Thus, the kind of categories we consider are
\[
\text{$\underbrace{\text{pointed +  protoadditive + regular}}_{\text{homological}}$ + descent-exact}
\] 
where, in presence of the other axioms, the protomodularity \cite{Bourn1991} condition can be equivalently expressed by saying that the split short five lemma holds. These axioms have numerous consequences. For instance homological lemmas such as the snake lemma or the 3 by 3 lemma still hold in this general context (see the monograph \cite{BB} for a general introduction to homological and semi-abelian categories). Note that the descent-exact homological category of topological groups is neither additive nor exact.

The notion of fundamental group is related to the concept of normal extension coming from the categorical Galois theory \cite{Janelidze:Pure}. In order to give an idea of this relation, let us consider what happens in a simple case. With respect to the reflection of the category $\Gp$ of groups into the category $\Ab$ of abelian groups given by the abelianisation functor $\ab \colon \Gp \to \Ab$ (we write $\eta \colon 1 \Rightarrow \ab$ for the unit of this reflection), one says that a surjective homomorphism $p \colon E \to B$ (an extension)  is a normal extension if the first projection $\pi_1$ of its kernel pair
\[
\xymatrix{
\Eq(p)\times_E \Eq(p) \ar@<1ex>[rr]^-{p_1}  \ar@<-1ex>[rr]_-{p_2} \ar@{.>}[rr]|-{\tau} && \Eq(p) \ar@(ul,ur)[]^-{\sigma} \ar@<1ex>[rr]^-{\pi_1} \ar@<-1ex>[rr]_-{\pi_2}& &E  \ar@{{ >}.>}[ll]|-{\delta}
}
\]
is such that the naturality square 
\[
\xymatrix{
E  \ar[d]_-p \ar[r]^-{\eta_E} &\ab(E) \ar[d]^-{\ab(p)}\\
B \ar[r]_-{\eta_B} & \ab(B)
}
\]
is a pullback. As shown in \cite{Janelidze:Pure}, this property is also equivalent to the fact that $p$ is a central extension of groups: the kernel $\Ker(p)$ of $p$ is a subgroup of the center of $E$.  Now it is always possible to turn an extension $p\colon E\to B$ into a normal extension: taking the quotient of $E$ by $[\Ker(p),E]$, the induced factorisation $I_1(p)$ of $p$
\[
\xymatrix{
E \ar[r]^-p \ar[d]& B\\
\frac{E}{[\Ker(p),E]} \ar@{.>}[ru]_-{ I_1(p)}&
}
\]
is a normal extension. This procedure gives a reflection $I_1 \colon \Ext(\Gp) \to \NExt(\Gp)$ of the category $\Ext(\Gp)$ of extensions of groups  into the category $\NExt(\Gp)$ of normal extensions of groups (we shall write $\eta^1$ for the unit of this reflection). Now let us assume that $p\colon E\to B$ is the normalisation of some free presentation $f\colon F \to B$. In that case, it turns out that the Galois group $\Gal(p,0)$ of $p$, defined as the group of automorphisms of $0$ in the groupoid
\[
\xymatrix{
\ab(\Eq(p)\times_E \Eq(p))\ar[rr]^-{\ab(\tau)} && \ab(\Eq(p)) \ar@(ul,ur)[]^-{\ab(\sigma)} \ar@<1ex>[rr]^-{\ab(\pi_1)}  \ar@<-1ex>[rr]_-{\ab(\pi_2)}  & & \ab(E)  \ar@{.>}[ll]|-{\ab(\delta)},
}
\]  
is an invariant of $B$: the first fundamental group $\pi_1(B)$ of $B$. This extends to a functor $\pi_1 \colon \Gp \to \Ab$. It was shown in \cite{EGoeVdL2} that this functor can be viewed as a left (pointwise) Kan extension in two different ways: as a Kan extension of the composite functor $\Gal(I_1(-),0)$ along the codomain functor $\Cod$:
\[
\xymatrix{
&\Ext(\Gp) \ar[dl]_-{\Cod} \ar[dr]^-{\Gal(I_1(-),0)}& \\
\Gp \ar@{.>}[rr]_-{\pi_1} \ar@{}[urr]|(0.4){\Rightarrow}&& \Ab
}
\]
or as a Kan extension of the composite functor $\Ker \circ I_1$ along $\Cod$ where $\Ker$ is the kernel functor:
\[
\xymatrix{
\Ext(\Gp) \ar[r]^-{I_1} \ar[d]_-{\Cod}& \NExt(\Gp) \ar[d]^-{\Ker}\\
\Gp \ar[r]_-{\pi_1} \ar@{}[ru]|-{\Rightarrow}& \Ab,
}
\]
that is as a satellite of the normalisation functor $I_1$.
Moreover, it was known since \cite{Janelidze:Hopf} that the first fundamental group of $B$ can be described, directly from the presentation $f\colon F \to B$, by the famous Hopf formula \cite{Hopf} for the second integral homology group of $B$:
\[
\pi_1(B)\cong \Gal(I_1(f),0) \cong  \frac{[F,F] \cap \Ker(f)}{[\Ker(f) ,F]} \cong H_2(B , \Zbb)
\] 
Let us go a bit further and let us remark that the data
 \[
\Gamma = (\Cs,\Xs,I,\eta, \Ec),
\]
with $\Cs=\Gp$, $\Xs=\Ab$, $I=\ab$, and $\Ec$ the class $\RegEpi(\Gp)$ of regular epimorphisms in $\Gp$, is actually what is called a closed Galois structure (see section 2.3 for a precise definition). Similarly, the data
\[
\Gamma_1 = (\Ext_\Ec(\Cs),\NExt_\Gamma(\Cs),I_1,\eta^1,\Ec^1)
\]
given by $\Ext_\Ec(\Cs)=\Ext(\Gp)$, $\NExt_\Gamma(\Cs)=\NExt(\Gp)$,  and $\Ec^1$ the class of double extensions \cite{Janelidze:Double} (a class of arrows in $\Ext_\Ec(\Cs)$ defined relatively to $\Ec$), is also a closed Galois structure that satisfies some of the conditions that $\Gamma$ satisfies. Inductively this leads to a tower of closed Galois structures
\[
\Gamma_n = (\Ext_\Ec^n(\Cs),\NExt_\Gamma^n(\Cs),I_n,\eta^n,\Ec^n)
\]
satisfying a suitable set of axioms. It is in that case possible to define higher fundamental groups functors in the following way: for $n\geq 1$, the $n^{\text{th}}$-fundamental group functor $\pi_n^{\Gamma}$, with respect to the Galois structure $\Gamma$ can be defined as the pointwise right Kan extension of $\Gs_{n}^\Gamma=\Dom^{n-1}\left( \Gal_{\Gamma_{n-1}}(I_n(-),0)\right)$ along the functor $\Cod^n \colon  \Ext^n_\Ec (\Cs) \to \Cs$: 
\[
\xymatrix{
&\Ext^n_\Ec(\Cs) \ar[dl]_-{\Cod^n} \ar[dr]^-{\Gs_{n}^\Gamma}& \\
\Cs \ar@{.>}[rr]_-{\pi_n^\Gamma} \ar@{}[urr]|(0.4){\Rightarrow}&& \Ab(\Xs)
}
\]
where $\Ab(\Xs)$ $(=\Ab)$ is the category of internal abelian groups in $\Xs$, and $\Dom^n$ and $\Cod^n$ are iterated versions of the  domain and codomain functors. The other definition in term of satellites and the higher Hopf formulae \cite{Brown-Ellis} for the description of the higher fundamental groups are also available. We have the isomorphisms 
\[
\pi_n(B) \cong \Gs_n^\Gamma(F) \cong H_{n+1}(B,\Zbb)
\]
where $F$ is any $n$-fold projective presentation of $B$ (see the end of the section 2.6 for a definition). 

Of course (with the exception of some minor details), nothing above is special to the category of groups and the fundamental group functors can be defined and studied in any descent-exact homological category with enough projective objects provided that the basic reflector $I$ is sufficiently good. By this, we mean that $I$ preserves pullbacks of type
\[
\xymatrix{
A \ar[r]  \ar[d]& B\ar[d]^-g\\
D\ar[r]_-h&C
}
\]
where $g$ is a split epimorphism and $h$ a regular epimorphism. 

It should be understood here that what we seek is a good definition of homology in our general setting. Some previous works in the field tend to show that the notion of fundamental group effectively provides the right notion. In particular we want to mention \cite{EGVdL} in which generalized Hopf formulae were given for the Barr-Beck cotriple homology when the coefficient functor is the reflection of a monadic semi-abelian category  into one of its Birkhoff subcategories (a subcategory closed under quotients and subobjects). This led the author of \cite{EverHopf} to take the Hopf formulae as the definition of homology objects in any semi-abelian category with enough regular projectives (with respect to a Birkhoff reflection once again). Still in the same situation, homology functors were already proved to be satellites in \cite{GVdL2}. Note that the latter approach to homology has the advantage that it doesn't require  at all projective objects.

The last important aspect of the present work concerns a simplification of the formulae for fundamental groups functors that occurs when the reflector $I$ factors as a reflector of the same kind followed by a protoadditive reflector. The notion of protoadditive functor was proposed in \cite{EG-honfg} as the suitable replacement of the notion of additive functor in the context of homological categories. In particular several connections with non-abelian homology were already studied  in the series of papers \cite{EG-honfg,Everaert-Gran-TT,DuckertsEveraertGran}. A protoadditive functor between homological categories is a functor which preserves split short exact sequences, i.e. short exact sequences
\[
\xymatrix{
0 \ar[r]& \Ker(f) \ar[r]^-{\ker(f)}  &A \ar@<0.5ex>[r]^-f & B \ar@<0.5ex>@{.>}[l]^-s   \ar[r]& 0
}
\]
where $f$ admits a section $s$. Such functors allow the introduction  in the Hopf formulae of some homological closure operator \cite{BG:Torsion} associated with the reflection. Particularly nice formulae are obtained when the reflector is additive or comes from a torsion theory (see the last section). The refined Hopf formula in the special case of the first fundamental group functor was the essential content of \cite{DuckertsEveraertGran}.

\tableofcontents

\section{Galois structures and extensions}

\subsection{Descent-exact homological categories}

A \emph{descent-exact homological category} is a regular pointed protomodular category in which every regular epimorphism is an effective descent morphism. We are going to recall what these terms mean and provide three main classes of such categories.

\paragraph{Pointed categories} A category is \emph{pointed} when it has an object which is both initial and terminal. This \emph{zero object} is denoted by $0$. For two objects $X$ and $Y$, we also write $0$ for the unique morphism from $X$ to $Y$ which factors through the zero object. 

\paragraph{Regular categories} A category $\Cs$ is \emph{regular}  \cite{Barr-Grillet-vanOsdol}  if it is finitely complete, every morphism $f$ can be factorised as a regular epimorphism followed by a monomorphism and regular epimorphisms are stable under pullbacks. 

\paragraph{Protomodular categories} A pointed category $\Cs$ is \emph{protomodular} \cite{Bourn1991} if 
\begin{itemize}
\item it has pullbacks along split epimorphisms;
\item it satisfies the \emph{split short five lemma}, i.e.\  given any commutative diagram
\[
\xymatrix{
\Ker(f) \ar[r]^-{\ker(f)}  \ar[d]_-u & A \ar@<0.5ex>[r]^-f \ar[d]^-v & B\ar[d]^-w \ar@<0.5ex>[l]^-s \\
\Ker(f')\ar[r]_-{\ker(f')}  & A' \ar@<0.5ex>[r]^-{f'} & B'\ar@<0.5ex>[l]^-{s'} 
}
\]
with $f\circ s=1_B$, $f'\circ s' = 1_{B'}$, if $u$, $v$ are isomorphisms then $v$ is also an isomorphism.
\end{itemize}
Note that the notion of protomodularity is generally defined more conceptually by a property of the so-called fibration of points (with no need of a zero object). A pointed protomodular category is \emph{unital} \cite{B0}, i.e.\ for any $X$ and $Y$, the pair of morphisms
\[
\xymatrix{
X  \ar[rr]^-{\iota_X =(1_X,0)} && X\times Y && Y \ar[ll]_-{\iota_Y=(0,1_Y)}. 
}
\]
is \emph{jointly extremal-epimorphic} (if $\iota_X$ and $\iota_Y$ factor through the same monomorphism, this monomorphism is  an isomorphism). Let us mention here that the product of two objects $X$ and $Y$ may be computed in a pointed category with pullbacks along split epimorphisms as the pullback of $X \to 0$ along $Y\to 0$. If the category has finite limits (equalisers actually), the pair of morphisms $(\iota_X,\iota_Y)$ is jointly epimorphic ($f \circ \iota_X = f' \circ \iota_X$ and $f \circ \iota_Y = f' \circ \iota_Y$ implies $f=f'$) and an object can only have one internal group structure: to be an internal group is a property. There is a useful criterion which can be used to recognise if a category is protomodular: if $\Xs \to \Cs$ is a conservative functor which preserves pullbacks and $\Cs$ is protomodular, then $\Xs$ is also protomodular. In a pointed protomodular category, we write
\[
\xymatrix{
0 \ar[r]   & K \ar[r]^-k  & A  \ar[r]^-f & B  \ar[r] & 0 
}
\]
when $f$ is a regular epimorphism and $k$ its kernel. Since in this context a regular epimorphism is always the cokernel of its kernel, we may call a sequence as above a \emph{short exact sequence}.

\paragraph{Morphisms of effective descent} A morphism $f\colon A\to B$ in a category $\Cs$ with pullbacks is an \emph{effective descent morphism} if the pullback functor 
\[
f^* \colon (\Cs \downarrow B) \to (\Cs \downarrow A)
\]
 is monadic. In a regular category, every effective descent morphism is a regular epimorphism. Note that the converse holds, in particular, in any \emph{Barr-exact} category, i.e.\ a regular category in which every internal equivalence relation is \emph{effective} (the kernel pair of some morphism). Thus the descent-exactness condition, that says that every regular epimorphism is effective for descent, can be viewed as a weakened version of the the Barr-exactness condition. The reader is invited to consult  \cite{Janelidze-Sobral-Tholen} for a nice introduction to descent theory.

\paragraph{Almost abelian categories} An  \emph{almost abelian} category \cite{Rump0} can be described in our terms as a category which is homological and \emph{cohomological} (its dual category is homological). Every almost abelian category is descent-exact \cite[Section 4.4]{Gran-Ngaha}. The original definition is the following: a category is almost abelian if it is additive, has kernels and cokernels and moreover  normal epimorphisms are pullback-stable and normal monomorphisms are pushout-stable. The categories of locally compact groups, normed vector spaces, Banach spaces (and bounded linear maps), Fr\'echet spaces are almost abelian. Of course, every abelian category is almost abelian.
 
\paragraph{Semi-abelian categories} A category is \emph{semi-abelian} \cite{Janelidze-Marki-Tholen} when it is Barr-exact, pointed protomodular and has binary coproducts. Among the basic examples of semi-abelian categories are the categories of groups, Lie algebras, rings, crossed modules, compact Hausdorff groups. Semi-abelian varieties (in the sense of universal algebra) have been completely characterised:

 \begin{theorem}\label{characterization-varieties-semi-abelian} \cite{Bourn-Janelidze} A finitary algebraic theory $\Tbb$ is semi-abelian, i.e.\ has a semi-abelian category $\Tbb(\Set)$ of models, precisely when, for some natural number $n$, the theory $\Tbb$ contains
  \begin{itemize}
  \item a unique constant $0$;
  \item $n$ binary operations $\alpha_1(X,Y)$, $\hdots$,  $\alpha_n(X,Y)$ satisfying $\alpha_i(X,X)=0$;
  \item an $(n+1)$-ary operation $\theta(X_1,\hdots,X_{n+1})$ satisfying
  \[
  \theta(\alpha_1(X,Y),\hdots,\alpha_n(X,Y),Y)=X.
  \]
  \end{itemize}
  \end{theorem}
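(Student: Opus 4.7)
The plan is to reduce both implications to protomodularity of the variety $\Tbb(\Set)$, since that is the only non-trivial semi-abelian axiom once a few standard facts about varieties are recalled: any finitary variety is automatically Barr-exact and cocomplete (in particular it has binary coproducts), while the presence of a \emph{unique} constant in $\Tbb$ is equivalent to the one-element algebra being both initial and terminal, i.e.\ to pointedness. So the entire content of the theorem is the matching of the operations $\alpha_i,\theta$ with the split short five lemma.

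For the direction ``operations $\Rightarrow$ semi-abelian'' I would verify the split short five lemma by direct term manipulation. Specialising $X=Y$ in the identity $\theta(\alpha_1(X,Y),\hdots,\alpha_n(X,Y),Y)=X$ yields the auxiliary fact $\theta(0,\hdots,0,Y)=Y$. Given a diagram as in the split short five lemma with $u$ and $w$ isomorphisms, injectivity of the middle map $v$ follows from observing that if $v(a_1)=v(a_2)$ then $f(a_1)=f(a_2)$, so each $\alpha_i(a_1,a_2)$ lies in $\Ker f$ and is killed by $u$, hence vanishes; applying $\theta(-,a_2)$ then gives $a_1=a_2$. Surjectivity is parallel: for $a'\in A'$ the elements $\alpha_i(a',s'f'(a'))$ lie in $\Ker f'$, lift to $\Ker f$ along $u^{-1}$, and together with $s(w^{-1}f'(a'))$ assemble via $\theta$ into a preimage of $a'$ under $v$.

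For the converse ``semi-abelian $\Rightarrow$ operations'' the idea is to probe the variety on the most universal split extension, namely the one internal to the free algebra $F_2:=F(\{X,Y\})$ on two generators. Consider the split epimorphism $q\colon F_2 \to F(\{Y\})$ sending $X,Y \mapsto Y$, with section sending $Y \mapsto Y$. Its kernel $K$ is the set of classes of binary terms $k(X,Y)$ such that $k(X,X)=0$. The key fact I would invoke is that in a semi-abelian category the canonical morphism $K+F(\{Y\}) \to F_2$ induced by the kernel inclusion and the section is a regular epimorphism. The generator $X\in F_2$ must then lie in the image of this map, which, translated into the syntax of the variety, says precisely that $X=\theta(\alpha_1(X,Y),\hdots,\alpha_n(X,Y),Y)$ is a theorem of $\Tbb$ for some $(n+1)$-ary term $\theta$ and some binary terms $\alpha_i$ with $\alpha_i(X,X)=0$.

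The main obstacle is the regular-epimorphism property of $K+B\to A$ used in the converse. This factors through two classical observations: protomodularity is equivalent to asking that, for any split short exact sequence $0\to K\to A \to B \to 0$, the pair formed by the kernel inclusion and the section be jointly extremal epic (a sharpening of the unitality condition recalled just before the statement); and Barr-exactness then upgrades this, via the regular image factorisation, to the statement that the induced morphism from the coproduct $K+B$ is regular epic. Modulo this structural fact, the rest is a direct dictionary between the algebraic identities and the categorical hypotheses.
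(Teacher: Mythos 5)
The paper does not prove this theorem: it is imported verbatim from \cite{Bourn-Janelidze} as a known characterisation, so there is no internal proof to compare against. Judged on its own, your argument is correct and is essentially the original Bourn--Janelidze proof. The reduction to protomodularity is legitimate (varieties are Barr-exact and cocomplete, and the unique constant corresponds to pointedness -- though you should note that a theory with \emph{no} constants has empty initial algebra, so ``unique'' really means ``exactly one''), and in a pointed finitely complete category protomodularity is equivalent to the split short five lemma, which the paper itself recalls. Your term-manipulation for the sufficiency direction is the standard one and checks out, provided the split short five lemma diagram is read as commuting with the sections (i.e.\ $v\circ s = s'\circ w$), which is how the paper draws it. For the converse, the key fact you isolate -- that for a split short exact sequence the pair $(\ker(f),s)$ is jointly strongly epimorphic, hence $[\ker(f),s]\colon K+B\to A$ is an extremal and therefore regular epi in a regular category with coproducts -- is the right lemma and is a standard consequence of the split short five lemma. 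One small step worth spelling out: surjectivity of $K+F(\{Y\})\to F_2$ says that $X$ lies in the subalgebra generated by $K\cup s(F(\{Y\}))$, so a priori $X=t(\alpha_1(X,Y),\dots,\alpha_n(X,Y),w_1(Y),\dots,w_m(Y))$ for several unary terms $w_j$; one must absorb these into the last variable to obtain the $(n+1)$-ary $\theta$ of the statement. That is routine, but it is the point where the precise shape of the identity in the theorem comes from.
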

  Using this theorem, it is easy to show that the category of groups, for instance, is semi-abelian. Indeed it is easy to check that the above equations are satisfied with $1$ as the unique constant in the theory, $n=1$,  $\alpha_1(X,Y)=X.Y^{-1}$ and $\theta(X,Y)=X.Y$. More recently, it was shown that the category of cocommutative Hopf algebras over a field of characteristic zero is also semi-abelian \cite{GKV}. It should be noted that a category $\Cs$ is in fact abelian if and only if both $\Cs$ and $\Cs^{op}$ are semi-abelian.

\paragraph{Topological semi-abelian varieties} A \emph{topological semi-abelian variety} \cite{Borceux-Clementino} is the category of models in $\Top$ of some semi-abelian theory $\Tbb$. An example is the category of topological groups $\Gp(\mathsf{Top})$ which is known to be not exact and not additive. Therefore $\Gp(\Top)$ is neither a semi-abelian category nor an almost abelian category. Nevertheless, it is known that every topological semi-abelian variety is (cocomplete and) homological \cite[Theorem 50]{Borceux-Clementino} and descent-exact \cite[Section 4.5]{Gran-Ngaha}. Note that the categories of models of semi-abelian theories in the category of compact Hausdorff spaces are themselves semi-abelian categories \cite[Theorem 50]{Borceux-Clementino}.

\paragraph{Torsion theories} A \emph{torsion theory} \cite{Dickson,Janelidze-Tholen-TT} in a pointed category $\Cs$ is a couple $(\Ts,\Fs)$ of full replete subcategories of $\Cs$ such that:
 
 \begin{itemize}
 \item every morphism $f\colon T \to F$ with $T \in \Ts$ and $F\in \Fs$ is zero;
 \item for any object $A$ in $\Cs$ there exists a short exact sequence:
 \[
 \xymatrix{
 0 \ar[r]  & T\ar[r]^k & A \ar[r]^f& F\ar[r] &0 
 }
 \]
 with $T \in \Ts$, $F\in \Fs$.
 \end{itemize}
 For a torsion theory $(\Ts,\Fs)$ in $\Cs$, the subcategories $\Ts$ and $\Fs$ are called, respectively, the \emph{torsion part} and the \emph{torsion-free part} of the torsion theory. As one can easily check from the definition, $\Ts$ is in fact a full replete normal mono-coreflective subcategory of $\Cs$ and $\Fs$ a full replete normal epi-reflective subcategory of $\Cs$. For $\Mr$ a class of monomorphisms and $(\Ts,\Fs)$ a torsion theory in a pointed category $\Cs$, one says that the torsion theory is \emph{$\Mr$-hereditary} if $\Ts$ is closed under $\Mr$-subobjects. In \cite{Gran-Lack}, torsion-free subcategories of semi-abelian categories were characterized as being precisely the (descent-exact) homological categories with binary coproducts and stable coequalisers. It turns out that all the three classes of examples of descent-exact homological categories we have given are torsion free subcategories (of their exact completions). But not every descent-exact homological category is a torsion-free subcategory of some semi-abelian category.

\subsection{Categories with a class of extensions}

As (implicitly) stated in the introduction, we shall work with pairs $(\Cs,\Ec)$ that satisfy some of the conditions that are satisfied by the pair $(\Cs,\RegEpi(\Cs))$ for $\Cs$ a descent-exact homological category and $\RegEpi(\Cs)$ the class of regular epimorphisms of $\Cs$. 

\paragraph{Axioms on extensions} Let $\Cs$ be a pointed protomodular category and $\Ec$ a subclass of $\RegEpi(\Cs)$. We shall denote the class of morphisms in $\Ec$ which are also split epimorphisms by $\Split{\Ec}$ and the full subcategory of the category $\Arr(\Cs)$ of arrows  whose objects are morphisms in $\Ec$ by $\Ext_\Ec(\Cs)$.  We shall write $(\Cs \downarrow_{\Ec} B)$ for the full subcategory of the comma category $(\Cs \downarrow B)$ whose objects are the arrows in $\Ec$ with codomain $B$. The pair $(\Cs,\Ec)$ satisfies
\begin{description}
\item[\hta{E1}] if $\Ec$ contains the isomorphisms in $\Cs$;
\item[\hta{E2}] if pullbacks of morphisms in $\Ec$ exists and are in $\Ec$;
\item[\hta{E3}] if $\Ec$ is closed under composition;
\item[\hta{E4}] if $g \circ f$ in $\Ec$ implies that $g$ is also in $\Ec$;
\item[\hta{E5}] if given a commutative diagram in $\Cs$
\[
\xymatrix{
 \Ker(a)  \ar[r]^-{\ker(a)}   \ar[d]_-k  & A_1 \ar[r]^-a   \ar[d]_-{f}  &A_0  \ar@{=}[d] \\
  \Ker(b) \ar[r]_-{\ker(b)}    & B  \ar[r]_-b  & A_0 
}
\]
with $a$ and  $k$ in $\Ec$, then necessarily $f$ is also in $\Ec$. 
\item[\hta{M}] if every morphism in $\Ec$ is \emph{monadic}, i.e.\ for all $f\colon A\to B$ in $\Ec$, the change-of-base functor $f^*\colon (\Cs \downarrow_\Ec B) \rightarrow (\Cs\downarrow_\Ec A)$ is monadic. 
\end{description}
Let us recall here that for every $f\colon A \to B$ in $\Ec$ the functor $f^*$ has a left adjoint $f_!$ (composition with $f$). This determines a category of Eilenberg-Moore algebras $(\Cs\downarrow_{\Ec} A)^{T^f}$ for the corresponding monad $T^f=f^*f_!$ and $f$ is monadic precisely when the comparison functor
\[
K^{T^f}\colon (\Cs\downarrow_{\Ec} B)\to (\Cs\downarrow_{\Ec} A)^{T^f}
\]
is an equivalence of categories. 

\paragraph{Double extensions} Given a pair  $(\Cs,\Ec)$ which satisfies \hl{E2} as above, it is possible to define a good class of morphisms $\Ec^1$ in $\Ext_\Ec(\Cs)$ as follows: a morphism $(f_1,f_0)\colon a \to b$ in $\Ext_\Ec(\Cs)$ is in $\Ec^1$ (and is called a \emph{double $\Ec$-extension} \cite{Janelidze:Double}) if all morphisms in the commutative diagram
\[
\xymatrix{
A_1 \ar[rd]|-{\langle a , f_1\rangle} \ar@/^/[rrd]^-{f_1} \ar@/_/[rdd]_-{a} \\
& A_0\times_{B_0} B_1 \ar[r]^{p_2} \ar[d]_-{p_1}&B_1 \ar[d]^-b \\
&A_0 \ar[r]_-{f_0}& B_0
}
\]
are in $\Ec$. 

\begin{theorem} Going from $(\Cs,\Ec)$ to $(\Ext_\Ec(\Cs),\Ec^1)$ preserves the following sets of axioms: 
\begin{enumerate}
\item $\{\hli{E1}-\hli{E3}\}$;
\item  $\{\text{pointed, protomodular, $\hli{E1}-\hli{E5}$, $\Ec\subseteq \RegEpi(\Cs)$}\}$;
\item $\{\text{pointed, protomodular, $\hli{E1}-\hli{E5}$, \hli{M}}\}$.
\end{enumerate}
\end{theorem}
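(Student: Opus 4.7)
The plan is to verify each axiom in turn for $(\Ext_\Ec(\Cs),\Ec^1)$, exploiting the fact that limits in $\Ext_\Ec(\Cs)$ are essentially computed componentwise via the canonical functors $\Dom,\Cod\colon \Ext_\Ec(\Cs)\to \Cs$. For the axioms of group~(1): \hl{E1} is immediate since an identity morphism of extensions has both components equal to identities, and the induced comparison map to the pullback is also an identity, so all three legs of the defining square of a double extension lie in $\Ec$ by \hl{E1} for $\Cs$. For \hl{E2}, a pullback in $\Ext_\Ec(\Cs)$ over a double extension is formed componentwise in $\Cs$, and \hl{E2} of $\Cs$ ensures that each of the three relevant arrows in the definition of a double extension is stable. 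For \hl{E3}, given two composable double extensions, the comparison diagram of their composite decomposes into the two individual comparison diagrams together with a connecting pullback; closure follows from \hl{E3} in $\Cs$ applied to each leg.

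For group~(2), pointedness is automatic since $1_0\colon 0\to 0$ is in $\Ec$ by \hl{E1}, giving a zero object in $\Ext_\Ec(\Cs)$. Protomodularity is obtained by applying the criterion recalled in the introduction to the conservative, pullback-preserving functor $(\Dom,\Cod)\colon \Ext_\Ec(\Cs) \to \Cs\times\Cs$, using that $\Cs\times\Cs$ is protomodular whenever $\Cs$ is. Axioms \hl{E4} and \hl{E5} are reduced componentwise: if $(g_1,g_0)\circ(f_1,f_0)\in \Ec^1$, then $g_1\circ f_1,g_0\circ f_0\in \Ec$, so $g_1,g_0\in\Ec$ by \hl{E4}, and a diagram chase on the induced map to the comparison pullback finishes the job; \hl{E5} is handled analogously by splitting the defining commutative rectangle of the axiom into its domain and codomain components and applying \hl{E5} of $\Cs$ twice, plus once to the induced map of comparisons. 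Finally, the inclusion $\Ec^1\subseteq \RegEpi(\Ext_\Ec(\Cs))$ follows because a double extension can be exhibited as the coequaliser of its kernel pair in $\Ext_\Ec(\Cs)$: kernel pairs are computed componentwise, and $(\Dom,\Cod)$ jointly reflects regular epimorphisms since $\Cs\times \Cs$ does and both components of a double extension belong to $\Ec\subseteq\RegEpi(\Cs)$.

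For group~(3), everything above except the regular epi containment is preserved by the same arguments, and the remaining task is axiom \hl{M}: given a double extension $(f_1,f_0)\colon a\to b$ in $\Ext_\Ec(\Cs)$, we must show that the pullback functor $(f_1,f_0)^*\colon (\Ext_\Ec(\Cs)\downarrow_{\Ec^1} b)\to (\Ext_\Ec(\Cs)\downarrow_{\Ec^1} a)$ is monadic. The plan is to relate this to the two componentwise monadicity statements in $\Cs$: the functor $\Cod$ induces a fibration from $(\Ext_\Ec(\Cs)\downarrow_{\Ec^1} b)$ to $(\Cs\downarrow_\Ec B_0)$ whose fibres can be described in terms of slices over the domain object, and pullback along $(f_1,f_0)$ is compatible with this fibration in the sense that it produces a pullback square of adjunctions. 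Axiom \hl{M} for $\Cs$ gives monadicity of $f_0^*$ and, in each fibre, of $f_1^*$; a standard descent-theoretic argument (Beck's theorem applied to the combined diagram, or equivalently a reflection-of-monadicity lemma for pullback squares of adjoint pairs) then assembles these into monadicity of $(f_1,f_0)^*$.

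The main obstacle is precisely this last step: the componentwise monadicities must be glued along the third comparison map appearing in the definition of a double extension, which demands a careful verification that the Eilenberg--Moore categories fit together into a genuine $2$-dimensional descent datum and that the comparison functor is an equivalence. The other steps are either formal or straightforward diagram chases using the axioms of group~(1) together with protomodularity.
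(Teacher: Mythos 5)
The decisive step of assertion~3 --- preservation of axiom \hli{M} --- is not actually proved in your write-up. You correctly isolate the difficulty (the monadicity of $f_0^*$ and the fibrewise monadicity of $f_1^*$ must be glued along the comparison morphism $A_1\to A_0\times_{B_0}B_1$), but you then appeal to ``a standard descent-theoretic argument'' or a ``reflection-of-monadicity lemma for pullback squares of adjoint pairs'' and explicitly flag this assembly as ``the main obstacle''. There is no such off-the-shelf lemma: showing that a double $\Ec$-extension induces a monadic change-of-base functor between the \emph{restricted} slices $(\Ext_\Ec(\Cs)\downarrow_{\Ec^1}b)\to(\Ext_\Ec(\Cs)\downarrow_{\Ec^1}a)$ is exactly the content of the result the paper cites for this point (Lemma~9 of Everaert's paper on higher central extensions), and its proof is a genuine argument, not a formal gluing of the two componentwise statements. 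As written, assertion~3 is asserted rather than established. Note also that the paper's own proof handles all three assertions by citation, and reduces assertion~3 to assertion~2 only after observing that \hli{M} together with the other axioms forces $\Ec\subseteq\RegEpi(\Cs)$; you need this same observation (and do not make it) before you may ``reuse the arguments of group~2'' in group~3, since several of those arguments use the regular-epimorphism hypothesis.

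A secondary gap sits in your protomodularity argument. To apply the conservative--pullback-preserving criterion to $(\Dom,\Cod)\colon\Ext_\Ec(\Cs)\to\Cs\times\Cs$ you must first know that $\Ext_\Ec(\Cs)$ \emph{has} pullbacks along its split epimorphisms and that these are computed componentwise; this is not automatic, because $\Ext_\Ec(\Cs)$ is not closed under arbitrary componentwise pullbacks in $\Arr(\Cs)$. The missing lemma is that every split epimorphism of $\Ec$-extensions is a double $\Ec$-extension: its components and its restriction to kernels are split epimorphisms, hence lie in $\Ec$ by \hli{E1} and \hli{E4}, and then \hli{E5}, applied to the two extensions $a$ and $A_0\times_{B_0}B_1\to A_0$ over $A_0$, places the comparison map in $\Ec$; only after this can one invoke \hli{E2} for $\Ec^1$ to get the required pullbacks, and the same observation is needed so that kernels of split epimorphisms in $\Ext_\Ec(\Cs)$ are computed componentwise in the split short five lemma. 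This is fixable, but it is precisely where \hli{E4} and \hli{E5} enter, and your sketch passes over it.
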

\begin{proof}
For the assertion 1 see \cite[Proposition 3.5]{EGVdL} or \cite[Proposition 1.6]{EGoeVdL} and for the assertion 2 see \cite[Proposition 1.8]{EverHopf}. Let us now consider the last assertion. Let us first remark that the axioms imply in particular that every morphism in $\Ec$ is a regular epimorphism (or equivalently here) a  normal epimorphism (see for instance \cite[Remark 1]{EverMaltsev}). Then one concludes by  \cite[Lemma 9]{EverMaltsev} and assertion~2. Indeed our axiom \hl{E5} is easily seen to imply the axiom (E5) used in this last reference (see the discussion in \cite[page 153]{EGoeVdL}).
\end{proof}

\paragraph{Isomorphisms of pairs} Given two pairs $(\Cs,\Ec)$ and $(\Xs,\Fc)$, we shall say that an isomorphism $F\colon \Cs \to \Ds$ is an \emph{isomorphism of pairs}
\[
F\colon (\Cs,\Ec) \to (\Xs,\Fc)
\]
if $F(\Ec)=\Fc$.

\subsection{Galois structures}

A structure $\Gamma=(\Cs,\Xs, I,H ,\eta,\epsilon, \Ec,\Fc)$ is a \emph{closed Galois structure} \cite{Janelidze:Recent} if
\begin{itemize}
\item 
\[
\ad{\Cs}{\Xs}{I}{H}
\]
is an adjunction with unit and counit 
\[
\eta\colon 1_\Cs \Rightarrow H\circ I, \qquad \epsilon \colon I\circ H \Rightarrow 1_\Xs,
\]
\item the pairs $(\Cs,\Ec),(\Xs,\Fc)$ satisfy \hl{E1}, \hl{E2} and \hl{E3};
\item $I(\Ec)\subseteq \Fc$ and $H(\Fc) \subseteq \Ec$;
\item  $\eta_C\colon C \to HI(C)$ is in $\Ec$ for all $C$ in $\Cs$;
\item $\epsilon_X \colon IH(X) \to X$ is an isomorphism for all $X$ in $\Xs$.
\end{itemize}

We shall denote a closed Galois structure $\Gamma$ by
\[
 \gal{(\Cs,\Ec)}{\Gamma =(I,H,\eta,\epsilon)}{(\Xs,\Fc)}
\]
and we shall assume that $H$ is an inclusion and $\epsilon_X$ an identity for all $X$.
Of course it is possible to compose closed Galois structures like we can compose adjunctions.

\paragraph{Types of extensions} With respect to a closed Galois structure $\Gamma$ as above, an extension $f\colon A\to B$ is said to be 
\begin{enumerate}
\item
 \emph{trivial} if the naturality square
\[
\xymatrix{
A \ar[r]^-{\eta_A}\ar[d]_-f&HI(A)\ar[d]^-{HI(f)}\\
B\ar[r]_-{\eta_B} &HI(B)
}
\]
is a pullback;
\item 
\emph{normal} if it is a monadic extension and if $f^*(f)$ is trivial.
\end{enumerate}
When the context is not clear, we speak of $\Gamma$-trivial and of $\Gamma$-normal extensions. We shall also write $\Triv_\Gamma(\Cs)$ and $\NExt_\Gamma(\Cs)$ for the categories of $\Gamma$-trivial extensions and $\Gamma$-normal extensions respectively (considered as full subcategories of $\Ext_\Ec(\Cs)$).

In the following we shall be particularly interested  in reflectors $I$ which preserve pullbacks of the type \Ref{special-pullback}, i.e.\ pullbacks
\begin{equation}
\label{special-pullback}
\vcenter{
\xymatrix{
A \ar[r]  \ar[d]& B\ar[d]^-g\\
D\ar[r]_-h&C
}}
\end{equation}
where $h$ is in $\Ec$ and $g$ in $\Split{\Ec}$. The main examples of such functors are the so-called Birkhoff reflectors and the protoadditive reflectors.

\paragraph{Birkhoff reflector} The reflector $I$ is a \emph{strongly $\Ec$-Birkhoff reflector}  \cite{EverHopf,EGVdL} if, for every morphism $f\colon A \to B$ in $\Ec$, all arrows in the diagram
\[
\xymatrix{
A \ar@/^/[rdr]^-{\eta_A} \ar@/_/[ddr]_-f \ar@{.>}[rd]|-{\langle f, \eta_A\rangle} \\
&B \times_{HI(B)} HI(A) \ar[d] \ar[r]&HI(A)\ar[d]^-{HI(f)}\\
&B\ar[r]_-{\eta_B}&HI(B)
}
\]
are in $\Ec$. If $\Cs$ is a semi-abelian variety and $\Xs$ a subvariety, the reflector $I$ is necessarily a strongly $\RegEpi(\Cs)$-Birkhoff reflector. It suffices to remember that a subvariety is precisely a class of algebras which is closed under quotients, products and subobjects. This is the type of functors which was considered in the works \cite{EGVdL,EverHopf,EverMaltsev}.  The proof that these functors preserve pullbacks of type \ref{special-pullback} can be found in \cite[Lemma 4.4]{EGVdL}.

\paragraph{Protoadditive functors} A functor $F \colon \Cs\to \Xs$ between pointed protomodular categories is \emph{protoadditive} \cite{EG-honfg}  if it preserves \emph{split short exact sequences}:
if one has a split short exact sequence in $\mathcal{C}$
\[
\xymatrix{
0 \ar[r]& \Ker(f) \ar[rr]^-{\ker(f)}  &&A \ar@<0.5ex>[rr]^-f && B \ar@<0.5ex>@{.>}[ll]^-s   \ar[r]& 0
}
\]
($s$ a section of $f$), then its image by $F$ is also a split short exact sequence in $\Xs$:
\[
\xymatrix{
0 \ar[r]& F(\Ker(f)) \ar[rr]^-{F(\ker(f))}  &&F(A) \ar@<0.5ex>[rr]^-{F(f)} && F(B) \ar@<0.5ex>@{.>}[ll]^-{F (s)} \ar[r]& 0
}
\]
This notion was introduced to extend the notion of additive functor to a non-abelian context. They also have been used in relation to homology in \cite{Everaert-Gran-TT,DuckertsEveraertGran}.
Let us recall from \cite{BJK2} that a \emph{protosplit monomorphism} in a pointed protomodular category $\Cs$ is a normal monomorphism $k\colon K\to A$ that is the kernel of a split epimorphism. Some examples of protoadditive functors are provided by the following
\begin{theorem}\cite{Everaert-Gran-TT} For $(\Ts,\Fs)$ a torsion theory in a homological category $\Cs$, the following conditions are equivalent:
\begin{enumerate}
\item the torsion subcategory $\Ts$ is $\Mr$-hereditary, for $\Mr$ the class of protosplit monomorphisms;
\item the reflector $\Cs \to \Fs$ is protoadditive.
\end{enumerate} 
\end{theorem}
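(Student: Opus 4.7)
The plan is to prove each implication separately. A recurring fact we shall use is that an object $A$ of $\Cs$ lies in $\Ts$ if and only if $F(A)=0$, where $F\colon \Cs \to \Fs$ denotes the reflector: from the torsion sequence $0 \to T(A) \to A \to F(A) \to 0$, the unit component $\eta_A \colon A \to F(A)$ is a regular epimorphism, and being a morphism from an object of $\Ts$ to an object of $\Fs$ it must be zero when $A \in \Ts$, forcing $F(A)=0$; the converse is immediate from the torsion sequence.

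For the implication (2)$\Rightarrow$(1), let $k \colon K \to A$ be a protosplit monomorphism with $A \in \Ts$. By definition, $k = \ker(f)$ for some split epimorphism $f \colon A \to B$ with section $s$, yielding a split short exact sequence $0 \to K \to A \to B \to 0$. Applying the protoadditive reflector $F$ produces again a split short exact sequence $0 \to F(K) \to F(A) \to F(B) \to 0$. Since $A \in \Ts$ gives $F(A)=0$, we deduce $F(K)=0$, hence $K \in \Ts$, as required.

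The substantive direction is (1)$\Rightarrow$(2). Starting from an arbitrary split short exact sequence $0 \to K \xrightarrow{k} A \xrightarrow{f} B \to 0$ with section $s$, we form the $3 \times 3$ commutative diagram whose three rows are the torsion sequences of $K$, $A$, $B$ and whose middle column is the given split short exact sequence; the left and right columns consist of the induced morphisms $T(k),T(f)$ and $F(k),F(f)$ respectively. The rows are short exact by definition of the torsion theory and the middle column by assumption. The key step is to show that the left column is a short exact sequence: since $T$ is right adjoint to the inclusion $\Ts \hookrightarrow \Cs$ it preserves kernels, while $T(s)$ is a section of $T(f)$, so $T(f)$ is a split epi in $\Cs$. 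Its $\Cs$-kernel is a protosplit monomorphism into $T(A) \in \Ts$, hence lies in $\Ts$ by hypothesis (1), and therefore coincides with $T(K)$. Thus the left column is a (split) short exact sequence. The $3 \times 3$ lemma, valid in the homological category $\Cs$, then implies that the right column is also a short exact sequence, and the identity $F(f)\circ F(s)=1_{F(B)}$ exhibits a splitting, establishing the protoadditivity of $F$.

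The main obstacle is precisely the identification of the $\Cs$-kernel of $T(f)$ with $T(K)$: this is where the $\Mr$-hereditary hypothesis on the torsion subcategory is essential, ensuring that the natural subobject of $T(A)$ appearing as this kernel actually belongs to $\Ts$. All other verifications, including the appeal to the $3 \times 3$ lemma, are then routine in the homological setting.
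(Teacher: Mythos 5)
The paper does not prove this statement: it is recalled verbatim from the cited reference \cite{Everaert-Gran-TT}, so there is no internal proof to compare against. Judged on its own, your argument is correct and complete in outline. The preliminary observation ($A\in\Ts$ iff $F(A)=0$) and the direction (2)$\Rightarrow$(1) are routine and accurately handled. For (1)$\Rightarrow$(2), your identification of the crux is exactly right: since the coreflector $T$ is a right adjoint, $T(K)$ is the kernel of $T(f)$ \emph{computed in} $\Ts$, i.e.\ $T$ applied to the $\Cs$-kernel of $T(f)$; the hereditary hypothesis forces that $\Cs$-kernel (a protosplit subobject of $T(A)\in\Ts$) to lie in $\Ts$ already, whence the two kernels agree and the left column $0\to T(K)\to T(A)\to T(B)\to 0$ is split short exact. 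You could make this two-step identification slightly more explicit, and you should cite the precise form of the $3\times 3$ lemma you invoke --- the version valid in homological categories has all three rows and the middle column exact and concludes that the left column is exact if and only if the right one is, which is exactly the configuration you set up. An equivalent packaging of the same idea, closer in spirit to Lemma~\ref{radref} of the present paper, is to say that you first prove the radical $T$ is protoadditive and then transfer protoadditivity to the reflector $F$; your $3\times 3$ argument is one clean way to effect that transfer.
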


\paragraph{Trivialisation functor} For a closed Galois structure $\Gamma$ such that the reflector $I$ preserves pullbacks of the kind \Ref{special-pullback}, one always gets an induced Galois structure
\[
\Gamma_\Sp\colon \gal{(\Cs,\Split \Ec)}{(I,H,\eta,\epsilon)}{(\Xs,\Split \Fc)}
\]
for which the inclusion functor 
\[
\xymatrix{
\Triv_{\Gamma_\Sp}(\Cs) \ar[r]^-{\tilde H_1} &  \Ext_{\Split{\Ec}}(\Cs)
}
\]
admits a left adjoint $T_1$. We shall write $\tilde \eta^1$ for the unit of the adjunction $T_1 \dashv \tilde H_1$. The trivialization $T_1(f)$ of an extension $f\colon A\to B$ in $\Split{\Ec}$ is given by $\eta_B^*(HI(f))$ as in the diagram
\[
\xymatrix{
A \ar@/^/[rdr]^-{\eta_A} \ar@/_/[ddr]_-f \ar@{.>}[rd]|-{\langle f, \eta_A\rangle} \\
&B \times_{HI(B)} HI(A) \ar[d]^-{T_1(f)} \ar[r]&HI(A)\ar[d]^-{HI(f)}\\
&B\ar[r]_-{\eta_B}&HI(B).
}
\]
Indeed $\tilde \eta^1_f =(\langle f, \eta_A\rangle , 1_B) \colon f \to T_1(f)$ and we shall write $\overline{\tilde \eta}^1_f =\langle f, \eta_A\rangle$. 

\paragraph{Normalisation functor} When every extension is monadic, the inclusion functor 
\[
\xymatrix{
\NExt_\Gamma(\Cs)  \ar[r]^-{ H_1} &  \Ext_\Ec(\Cs)
}
\]
also has a left adjoint $I_1$ and we write $\eta^1$ for the unit of the adjunction $I_1 \dashv H_1$. The normalization $I_1(f)$ of an extension $f\colon A \to B$ is given by the commutative diagram
\begin{equation}\label{magicdiagram}
\vcenter{
\xymatrix{
\Eq(f)  \ar[rr]^-{\pi_2}  \ar[dr]^(0.6){\overline{\tilde \eta}^1_{\pi_1}}   \ar[ddd]_(0.6){\pi_1}   && A \ar[rd]^(0.6){\overline{\eta}^1_f}   \ar@{.>}[ddd]_(0.6)f  &\\
& T_1[f] \ar[rr]   \ar[ldd]^-{T_1(\pi_1)}  &&I_1[f] \ar[ldd]^-{I_1(f)}  \\\\
A  \ar[rr]_-f  &&B
}
}
\end{equation}
in which the three squares are pullbacks and pushouts. In fact, the morphism $\overline{\tilde \eta}^1_{\pi_1} \colon \pi_1 \to T_1(\pi_1)$ is a morphism in  $(\Cs\downarrow_{\Ec} A)^{T^f}$ and $\overline{\eta}^1_f\colon f \to I_1(f)$ the corresponding morphism in $\Cs \downarrow_\Ec B$. Using the above diagram, it easy to show that $I(I_1(f))=I(f)$. Note also that $I_1$ restricted to $\Ext_{\Split{\Ec}}(\Cs)$ is $T_1$.  See \cite{ED-NormalExt} for a detailed explanation.

\paragraph{Radicals} The reflector $I \colon \Cs \to \Xs$ induces a radical, i.e. a normal subfunctor $\mu \colon [-]_{\Gamma,0} \to 1_\Cs$ such that $[X/[X]_{\Gamma,0}]_{\Gamma,0}=0$ for all $X$ in $\Cs$,    given by 
\[
\xymatrix{
0 \ar[r] & [-]_{\Gamma,0} \ar[r]^-{\mu} & 1_{\Cs}  \ar[r]^-{\eta} & HI \ar[r] & 0
}
\]
There are relations between some properties of this radical and properties of the reflector:
\begin{lemma}\label{radref} Let 
\[
\Gamma \colon \gal{(\Cs,\Ec)}{(I,H,\eta,\epsilon)}{(\Xs,\Fc)}
\]
be such that
 \begin{itemize}
 \item $\Cs$ is pointed protomodular;
 \item $(\Cs,\Ec)$ satisfies \hli{E4}, \hli{$E5$} and $\Ec\subseteq \RegEpi(\Cs)$.
 \end{itemize}
 Then
\begin{itemize}
\item $I$ preserves pullbacks of the form \Refi{special-pullback} if and only if $[-]_{\Gamma,0}$ preserves them;
\item $I$ is protoadditive if and only if $[-]_{\Gamma,0}$ is protoadditive.\qed
\end{itemize}
\end{lemma}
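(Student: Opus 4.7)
The plan is to deduce both equivalences from the natural short exact sequence of functors
\begin{equation*}
0 \to [-]_{\Gamma,0} \xrightarrow{\mu} 1_\Cs \xrightarrow{\eta} HI \to 0.
\end{equation*}
I would apply this pointwise to the diagrams in question and then invoke the snake lemma for part (1) and the $3\times 3$ lemma for part (2), both of which are available in any descent-exact homological category. Since $H$ is a fully faithful inclusion, it suffices to work with $HI$ rather than $I$.

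For the first assertion, let $P := D\times_C B$ be a pullback of the form \Refi{special-pullback} with $g\in\Split{\Ec}$ and $h\in\Ec$. Applying the radical and the reflector to the whole square produces the two canonical comparison maps
\begin{equation*}
\phi_{r}\colon [P]_{\Gamma,0}\to [D]_{\Gamma,0}\times_{[C]_{\Gamma,0}} [B]_{\Gamma,0}, \quad \phi_{HI}\colon HI(P)\to HI(D)\times_{HI(C)} HI(B),
\end{equation*}
the two preservation conditions asserting precisely that these are isomorphisms. I would first establish that
\begin{equation*}
0 \to [D]_{\Gamma,0}\times_{[C]_{\Gamma,0}} [B]_{\Gamma,0} \to D\times_C B \to HI(D)\times_{HI(C)} HI(B) \to 0
\end{equation*}
is a short exact sequence. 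Placed beneath the SES $0\to [P]_{\Gamma,0}\to P\to HI(P)\to 0$ with identity middle vertical, this yields a ladder of short exact sequences. The snake lemma then produces a six-term exact sequence whose kernel and cokernel of the middle vertical are zero, forcing simultaneously that $\phi_{r}$ is a monomorphism, $\phi_{HI}$ is a regular epimorphism, and $\Ker\phi_{HI}\cong \Coker\phi_{r}$. Hence $\phi_{r}$ is an isomorphism iff its cokernel is zero, iff $\phi_{HI}$ has trivial kernel, iff $\phi_{HI}$ is an isomorphism.

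The main technical obstacle is the exactness of the displayed lower sequence. That the left map is the kernel of the right is a direct inspection on generalised elements: a pair $(d,b)\in D\times_C B$ vanishes in $HI(D)\times_{HI(C)}HI(B)$ exactly when $\eta_D(d)=0$ and $\eta_B(b)=0$. For the right map to be a regular epimorphism, I would argue as follows: since $g\in\Split{\Ec}$, so is $HI(g)$, and by \hl{E2} the projection $HI(D)\times_{HI(C)}HI(B)\to HI(D)$ is then in $\Ec$; together with $\eta_D,\eta_B\in\Ec$ and the composition and pullback closure of $\Ec$, axiom \hl{E5} is used to force the induced map $D\times_C B \to HI(D)\times_{HI(C)} HI(B)$ into $\Ec\subseteq \RegEpi(\Cs)$.

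For the second assertion, let $0\to K\to A\to B\to 0$ be a split short exact sequence with section $s$. I would build the $3\times 3$ diagram whose three columns are the radical SESs at $K$, $A$, $B$ and whose middle row is the given SES; the top and bottom rows are induced by the functoriality of $[-]_{\Gamma,0}$ and $HI$, and both carry sections inherited from $s$. The columns and the middle row being short exact, the $3\times 3$ lemma in descent-exact homological categories forces the top row to be short exact iff the bottom row is; since the presence of a section is automatic, this is exactly the sought equivalence between the protoadditivity of $[-]_{\Gamma,0}$ and that of $HI$. The hard part will be the exactness verification in part (1); once it is handled, both equivalences reduce to formal applications of the two homological lemmas.
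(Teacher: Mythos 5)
The paper states this lemma without proof (it ends in a \qed), so there is no official argument to compare against; judged on its own terms, your two-step strategy --- realise the two comparison morphisms $\phi_r$ and $\phi_{HI}$ as the kernel part and the quotient part of a morphism of short exact sequences whose middle component is an identity --- is the natural one, and the reduction to the exactness of the lower row is correct. The genuine gap sits exactly where you locate it, and the fix you propose does not work. Axiom \hl{E5} compares two extensions with the \emph{same} codomain $A_0$ (the right-hand vertical of its defining diagram is an identity), whereas your two projections land in $D$ and in $HI(D)$ respectively, so \hl{E5} cannot be applied to them as they stand. Even after pulling the second projection back along $\eta_D$ so that both extensions sit over $D$, \hl{E5} would require the induced morphism on kernels, namely the restriction $\Ker(g)\to\Ker(HI(g))$ of $\eta_B$, to lie in $\Ec$ --- and nothing in the hypotheses gives this (only $\Ec\subseteq\RegEpi(\Cs)$ is assumed, not the converse). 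Note also that you only need $\theta\colon D\times_C B\to HI(D)\times_{HI(C)}HI(B)$ to be a normal epimorphism, not to be in $\Ec$, so \hl{E5} is the wrong tool altogether.

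What actually makes $\theta$ a regular epimorphism is the splitness of $g$, which your sketch never exploits beyond transporting it to $HI(g)$. Since $g$ has a section $t$, functoriality of the radical gives $[g]_{\Gamma,0}\circ[t]_{\Gamma,0}=1$, so $[g]_{\Gamma,0}\colon[B]_{\Gamma,0}\to[C]_{\Gamma,0}$ is a split, hence regular, epimorphism; by the kernel criterion for regular pushouts in a regular Mal'tsev (in particular homological) category, the naturality square of $\eta$ at $g$ is then a regular pushout, i.e.\ $\langle g,\eta_B\rangle\colon B\to C\times_{HI(C)}HI(B)$ is a regular epimorphism. Pulling this back along $h$ and composing with the pullback of $\eta_D$ along the projection to $HI(D)$ exhibits $\theta$ as a composite of regular epimorphisms. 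Without an input of this kind the exactness of your lower row is simply false in general: for a non-split $g\in\Ec$, surjectivity of $\langle g,\eta_B\rangle$ is a Birkhoff-type condition on the reflection, which is not assumed here. Two smaller points: the snake and $3\times 3$ lemmas require $\Cs$ to be homological (in particular regular and finitely complete), not merely pointed protomodular as the lemma literally states, so you are tacitly strengthening the hypotheses; and in part (2) only the middle row and the three columns are known exact, so you are invoking the upper and the lower $3\times 3$ lemmas in opposite directions --- legitimate here because every row carries a compatible splitting (so each candidate row is automatically a complex whose second arrow is a split, hence normal, epimorphism), but this is precisely the point that needs to be said in a non-exact homological category.
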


The radical $\mu^1 \colon [-]_{\Gamma_1,0} \to 1_{\Ext_\Ec(\Cs)}$ corresponding  to $I_1$:
\[
\xymatrix{
0 \ar[r] & [-]_{\Gamma_1,0}  \ar[r]^-{\mu^1} & 1_{\Ext_\Ec(\Cs)}  \ar[r]^-{\eta^1} & H_1 I_1 \ar[r] & 0
}
\]
admits a nice description. First note that, for $f$ in  $\Ext_\Ec(\Cs)$, $\mu^1_f=(\overline{\mu}^1_f,0)$ where $\overline{\mu}^1_f$ is the kernel of $\overline{\eta}^1_f$. 
Then, since $\overline{\mu}^1_f=\pi_2 \circ  \overline{\mu}^1_{\pi_1}$ (see diagram \Ref{magicdiagram}),  it is easy to check that $  \overline{\mu}^1_f = \pi_2\circ \ker(\pi_1)\circ k$ where $k$ is the monomorphism in the commutative diagram
\[
\xymatrix{
[\Ker(f)]_{\Gamma,0} \ar[dd] \ar@/^2ex/[ddrr]^-{[\Ker(\pi_1)]_{\Gamma,0}} \ar@/_9ex/[dddd]_-{\mu_{\Ker(f)}}&&&&\\\\
[f]_{\Gamma,1} =[\pi_1]_{\Gamma,1}  \ar[rrdd]_-{\overline{\mu}^1_{\pi_1}}   \ar[dd]_-k \ar[rr]^-{\ker ([\pi_1]_{\Gamma,0})} \ar@{}[rrdd] && [\Eq(f)]_{\Gamma,0}  \ar[dd]^-{\mu_{\Eq(f)}}  \ar[rr]^-{[\pi_1]_{\Gamma,0}}  \ar[dd] && [A]_{\Gamma,0} \ar[dd]^-{\mu_{\eta_A}} \\\\
 \Ker(f) \ar[rr]_-{\ker(\pi_1)} && \Eq(f) \ar[rr]^-{\pi_1} \ar[dd]_-{\eta_{\Eq(f)}} \ar[rd]|-{\overline{\eta}^1_{\pi_1}} && A \ar[dd]^-{\eta_A} \\
 &&&T_1[\pi_1] \ar[ru]|-{T_1(\pi_1)}\ar[dl]
 \\
 &&HI(\Eq(f))\ar[rr]_-{HI(\pi_1)}   &&I(A)
}
\]
where the left hand square is a pullback. Note that if $f$ is normal, then $\Ker(f)$ is in $\Xs$ and the converse is true when $I$ is protoadditive. One also sees here that, for $f\colon A \to B$ in $\Split{\Ec}$, one has
\[
[f]_{\Gamma,1} = [A]_{\Gamma,0} \cap \Ker(f)
\]

\paragraph{Closure operators} Let $\mathcal{M}(\Ec)$ be the full subcategory of $\Arr(\Cs)$ determined by the kernels of morphisms in $\Ec$. For an object $k\colon K\to A  $ in $\mathcal{M}(\Ec)$, the closure of $k$ with respect to $\Xs$ is determined by the following rule:
\[
\overline{k}^\Xs_A= \ker(\eta_{\Coker(k)} \circ \coker(k)) = \coker(k)^{-1}(\mu_{\Coker(k)})
\]
This defines an endofunctor $\Pb^\Xs \colon\mathcal{M}(\Ec)\to \mathcal{M}(\Ec) $ which has the following properties:
\begin{itemize}
\item $\Cod =\Cod \circ \Pb^\Xs$;
\item $\forall k \in \mathcal{M}(\Ec) \colon k \leq \overline{k}^\Xs$;
\item $\forall k , l\in \mathcal{M}(\Ec) \colon  k \leq l \Rightarrow \overline{k}^\Xs \leq \overline{l}^\Xs$;
\item $\forall k \in \mathcal{M}(\Ec) \colon \overline{k}^\Xs =\overline{\overline{k}^\Xs}^\Xs$;
\end{itemize}
that is an \emph{idempotent closure operator} \cite{Tholen1}. This closure operator also has the following additional property:
$\forall f\colon A \to B \in \Ec$ and $k\colon K \to B \in \mathcal{M}(\Ec)$
\[
\overline{f^{-1}(k)}^\Xs = f^{-1}(\overline{k}^\Xs),
\]
that is every morphism in $\Ec$ is ``open''. In the context of a homological category $\Cs$ with $\Ec=\RegEpi(\Cs)$, such closure operators were called homological (see \cite{BG:Torsion}) and were already used to describe the first fundamental group functor in \cite{DuckertsEveraertGran}.

\subsection{Derived Galois structures (simple case)}

\begin{theorem}\label{firstderived} Let us assume that 
\begin{equation}\label{GStype1} 
\Gamma \colon \gal{(\Cs,\Ec)}{(I,H,\eta,\epsilon)}{(\Xs,\Fc)}
 \end{equation}
 is a closed Galois structure such that
 \begin{itemize}
 \item $\Cs$ is pointed protomodular;
 \item $(\Cs,\Ec)$ satisfies \hli{E4}, \hli{$E5$} and \hli{M};
 \item $I$ preserves pullbacks of type \Refi{special-pullback}.
 \end{itemize}
Then one has a closed Galois  structure
\[
\xymatrix{
\Gamma_1 \colon (\Ext_\Ec(\Cs),\Ec^1) \ar[rr]^-{(I_1,H_1,\eta^1,\epsilon^1)} &&   (\NExt_\Gamma(\Cs),\Fc^1)
}
\]
such that 
 \begin{itemize}
 \item $\Ext_\Ec(\Cs)$ is pointed protomodular;
 \item $(\Ext_\Ec(\Cs),\Ec^1)$ satisfies \hli{E4}, \hli{$E5$} and \hli{M};
 \item $I_1$ preserves pullbacks
 \[
\xymatrix{
a  \ar[r] \ar[d] & b \ar[d]^-g \\
d \ar[r]_-h & c
}
\]
with $h$ in $\Ec^1$ and $g$ in $\Split{\Ec^1}$.
 \end{itemize}
\end{theorem}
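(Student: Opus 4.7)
The plan is to verify the three bulleted conclusions in turn, leveraging as much as possible the material already in the paper.

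\medskip

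\emph{Ambient pair.} That $\Ext_\Ec(\Cs)$ is pointed protomodular and that $(\Ext_\Ec(\Cs),\Ec^1)$ satisfies \hli{E4}, \hli{E5} and \hli{M} is exactly assertion (3) of the theorem on the stability of axioms under passing from $(\Cs,\Ec)$ to $(\Ext_\Ec(\Cs),\Ec^1)$. No new work is required here.

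\medskip

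\emph{Closed Galois structure $\Gamma_1$.} The candidate data are already in place: the reflector $I_1$, its fully faithful right adjoint $H_1$ (the inclusion), the unit $\eta^1$, and the identity counit $\epsilon^1$ are described in the ``Normalisation functor'' paragraph via diagram \Ref{magicdiagram}. I would define $\Fc^1$ to be the class of arrows of $\NExt_\Gamma(\Cs)$ which lie in $\Ec^1$, so that $H_1(\Fc^1)\subseteq \Ec^1$ is tautological. Axioms \hli{E1}-\hli{E3} for $(\NExt_\Gamma(\Cs),\Fc^1)$ follow from those already known for $(\Ext_\Ec(\Cs),\Ec^1)$, once one has checked that $\NExt_\Gamma(\Cs)$ is closed in $\Ext_\Ec(\Cs)$ under composition and under pullbacks along split epimorphisms between normal extensions; this is a classical verification that uses only \hli{M} and the preservation by $I$ of pullbacks of type \Ref{special-pullback}. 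The inclusion $I_1(\Ec^1)\subseteq\Fc^1$ amounts to showing that, for a double $\Ec$-extension $(f_1,f_0)\colon a\to b$, the induced square $I_1(f_1,f_0)$ is still a double $\Ec$-extension; this is obtained by a diagram chase on the two copies of \Ref{magicdiagram} attached to $a$ and $b$, connected by the morphism induced by $(f_1,f_0)$ at the level of kernel pairs. Finally, $\eta^1_a\in\Ec^1$ reduces to $\overline{\eta}^1_a\in\Ec$, which follows from the fact that $\overline{\eta}^1_a$ arises from the pushout--pullback construction in \Ref{magicdiagram} out of $\overline{\tilde\eta}^1_{\pi_1}=\langle \pi_1,\eta_{\Eq(a)}\rangle$, the latter lying in $\Ec$ by the preservation hypothesis on $I$.

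\medskip

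\emph{Preservation of special pullbacks by $I_1$.} A pullback in $\Ext_\Ec(\Cs)$ is computed componentwise in $\Arr(\Cs)$, and the hypothesis that $h\in\Ec^1$ and $g\in\Split{\Ec^1}$ produces, at the levels of domain and of codomain, two pullbacks in $\Cs$ of type \Ref{special-pullback}. Both are preserved by $I$ by assumption, and hence by the radical $[-]_{\Gamma,0}$ by Lemma~\Ref{radref}. Since $I_1$ leaves the codomain essentially unchanged, it suffices to check preservation at the level of domains. I would use the explicit description of $[a]_{\Gamma,1}$ given just before the ``Closure operators'' paragraph --- in particular the simplified formula $[a]_{\Gamma,1}=[\Dom(a)]_{\Gamma,0}\cap \Ker(a)$ available when $a$ is split, which is the relevant situation because $g$ is split at each level --- to reduce the preservation of the pullback by $I_1$ to that of kernels, of intersections of $\Ec$-kernels, and of quotients by such, combined with the preservation of the corresponding special pullbacks by $[-]_{\Gamma,0}$. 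This last step, which requires carefully managing the interplay between the componentwise pullback in $\Arr(\Cs)$, the kernel pair of the domain arrow, and the radical $[-]_{\Gamma,0}$, is the main technical obstacle of the proof.
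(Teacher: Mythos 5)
Your first two bullets track the paper: the statements about $\Ext_\Ec(\Cs)$ being pointed protomodular and $(\Ext_\Ec(\Cs),\Ec^1)$ satisfying \hli{E4}, \hli{E5}, \hli{M} are taken from assertion (3) of the stability theorem of Section 2.2, and the adjunction $I_1\dashv H_1$ is already set up in the ``Normalisation functor'' paragraph; the paper likewise reduces the whole theorem to the preservation of special pullbacks by $I_1$. The gap is in your treatment of that last point. You propose to compute the radical at each vertex of the pullback square by the simplified formula $[a]_{\Gamma,1}=[\Dom(a)]_{\Gamma,0}\cap\Ker(a)$, ``available when $a$ is split, which is the relevant situation because $g$ is split at each level.'' This conflates two different things: the hypothesis is that the \emph{morphism} $g\colon b\to c$ lies in $\Split{\Ec^1}$, i.e.\ is a split epimorphism \emph{between} extensions; it says nothing about the \emph{objects} $a,b,c,d$, which are arbitrary one-fold extensions and in general not split. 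The formula $[f]_{\Gamma,1}=[\Dom(f)]_{\Gamma,0}\cap\Ker(f)$ is stated in the paper only for $f\in\Split{\Ec}$, so it is not available at any of the four vertices, and the reduction built on it collapses. You also explicitly leave the remaining step as ``the main technical obstacle,'' so the core of the argument is open.

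The paper resolves exactly this obstacle by first taking kernel pairs. By commutation of limits one obtains a cube whose front and back faces (on $A_1,B_1,D_1,C_1$ and on $\Eq(a),\Eq(b),\Eq(d),\Eq(c)$) are pullbacks of type \Ref{special-pullback}. The point is that $\pi_1\colon\Eq(a)\to A_1$ \emph{is} split, and by the description of the radical preceding the ``Closure operators'' paragraph one has $[a]_{\Gamma,1}=[\pi_1]_{\Gamma,1}=\Ker\bigl([\Eq(a)]_{\Gamma,0}\to[A_1]_{\Gamma,0}\bigr)$. Applying $[-]_{\Gamma,0}$ to the cube (legitimate by Lemma \ref{radref}, since that functor preserves special pullbacks) and then taking kernels of the resulting comparison of pullback squares shows that the square of the $[a]_{\Gamma,1}$'s is a pullback; Lemma \ref{radref} applied to $\Gamma_1$ then concludes. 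If you want to keep your componentwise setup, the fix is precisely this detour through $\Eq(-)$, which is where the splitness you are trying to exploit actually lives.
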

\begin{proof} 
We just have to check here that any pullback of the form above, i.e.\ any pullback
\[
\xymatrix@=15pt{
A_1 \ar@<-0.5ex>[dd] \ar[dr]_(0.6)a \ar[rr]  \ar@{}[drdr]|<<{\pullback} &&B_1 \ar[dr]^-b  \ar@<0.5ex>[dd]^(0.7){g_1}\\
&A_0 \ar@{}[dr]|<<{\pullback} \ar@<-0.5ex>[dd]  \ar[rr] &&B_0\ar@<0.5ex>[dd]^-{g_0} \\
D_1  \ar[rd]_-d \ar[rr]^(0.3){h_1} \ar@<-0.5ex>@{.>}[uu]&&C_1\ar[dr]^-c \ar@<0.5ex>@{.>}[uu]\\
&D_0\ar[rr]_-{h_0} \ar@<-0.5ex>@{.>}[uu] &&C_0\ar@<0.5ex>@{.>}[uu] .
}
\] 
with $h=(h_1,h_0)$ in $\Ec^1$ and $g=(g_0,g_1)$ in $\Split {\Ec^1}$, is preserved by $I_1$. If one takes kernel pairs, then by commutation of limits, one finds a diagram
\[
\xymatrix@=15pt{
\Eq(a) \ar@<-0.5ex>[dd]  \ar[dr]_-{\pi_1} \ar[rr] &&\Eq(b)  \ar[dr]^-{\pi_1}  \ar@<0.5ex>[dd]^(0.7){\hat g_1}\\
&A_1 \ar@{}[dr]|<<{\pullback} \ar@<-0.5ex>[dd]  \ar[rr] &&B_1\ar@<0.5ex>[dd]^-{g_1} \\
\Eq(d)  \ar[dr]_-{\pi_1}  \ar[rr]^(0.3){\hat h_1} \ar@<-0.5ex>@{.>}[uu]&&\Eq(c) \ar[dr]^-{\pi_1} \ar@<0.5ex>@{.>}[uu]\\
&D_1\ar[rr]_-{h_1} \ar@<-0.5ex>@{.>}[uu] &&C_1\ar@<0.5ex>@{.>}[uu] .
}
\]
where the front and back faces are pullbacks of the form (\ref{special-pullback}). Then applying the radical $\rf_\Gamma$ to this diagram one obtains a cube 
\[
\xymatrix@=15pt{
[ \Eq(a)]_{\Gamma,0} \ar@<-0.5ex>[dd] \ar[dr] \ar[rr] \ar@{}[dr]|<<{\pullback} &&[\Eq(b)]_{\Gamma,0}  \ar[dr] \ar@<0.5ex>[dd]^(0.7){[\hat g_1]_{\Gamma,0} }\\
&[A_1]_{\Gamma,0}  \ar@{}[dr]|<<{\pullback} \ar@<-0.5ex>[dd]  \ar[rr] &&[B_1]_{\Gamma,0} \ar@<0.5ex>[dd]^-{[g_1]_{\Gamma,0} } \\
[ \Eq(d)]_{\Gamma,0}   \ar[dr]  \ar[rr]^(0.3){[\hat h_1]_{\Gamma,0} } \ar@<-0.5ex>@{.>}[uu]&&[\Eq(c)]_{\Gamma,0}  \ar[dr] \ar@<0.5ex>@{.>}[uu]\\
&[ D_1]_{\Gamma,0} \ar[rr]_-{[h_1]_{\Gamma,0} } \ar@<-0.5ex>@{.>}[uu] &&[C_1]_{\Gamma,0} \ar@<0.5ex>@{.>}[uu].
}
\]
Consequently, one finds a pullback
\[
\xymatrix{
[a]_{\Gamma,1}  \ar[r]  \ar@<-0.5ex>[d]  \ar@{}[rd]|<<{\pullback} &[b]_{\Gamma,1}   \ar@<0.5ex>[d]\\
[d]_{\Gamma,1} \ar[r] \ar@<-0.5ex>@{.>}[u]& [c]_{\Gamma,1}  \ar@<0.5ex>@{.>}[u] 
}
\]
and one concludes using Lemma \ref{radref}.
\end{proof}

\subsection{Derived Galois structures (composite case)}

Let us consider a composite of closed Galois structures
\begin{equation}\label{GStypeC}
\xymatrix{
\Gamma'' \colon (\Cs,\Ec) \ar@^{->}[rr]^-{\Gamma=(I,H,\eta,\epsilon)}&&(\Xs,\Fc) \ar@^{->}[rr]^-{\Gamma'=(F,U,\theta,\zeta)}&&(\Ys,\Gc)
}
\end{equation}
where 
\begin{itemize}
\item $\Cs$ is pointed protomodular;
\item $(\Cs,\Ec)$ satisfies \hl{E4}, \hl{E5} and \hl{M};
\item $I$ preserves pullbacks of type \Ref{special-pullback};
\item $F$ is protoadditive.
\end{itemize}

\begin{theorem}\label{Caracterisation} Let $f: A \rightarrow B$ be in $\Ec$. The following conditions are equivalent:
\begin{enumerate}
 \item $f$ is an $\Gamma''$-normal extension;
 \item $f$ is $\Gamma$-normal and $\Ker (f) \in \Ys$.
\end{enumerate}
\end{theorem}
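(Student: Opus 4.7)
The plan is to exploit the factorisation $\eta''_X=H\theta_{I(X)}\circ\eta_X$ of the unit of the composite reflection $FI\dashv HU$, and to analyse the naturality squares it induces on the kernel-pair projection $\pi_1\colon\Eq(f)\to A$. Since $f\in\Ec$ is automatically monadic by axiom \hli{M}, normality of $f$ with respect to $\Gamma$ (resp.\ $\Gamma''$) is equivalent to $\Gamma$-triviality (resp.\ $\Gamma''$-triviality) of $\pi_1$.

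For $(1)\Rightarrow(2)$, the fact that $\Ker(f)\in\Ys$ follows at once from the general observation recalled above that the kernel of a normal extension lies in the corresponding reflective subcategory, applied here to $\Gamma''$. That $f$ is in addition $\Gamma$-normal is obtained through the description of the first radical: the diagram characterising $\overline{\mu}^1_f$ exhibits $[f]_{\Gamma,1}$ as the intersection $[\Eq(f)]_{\Gamma,0}\cap\Ker(\pi_1)$ taken inside $\Eq(f)$, and the factorisation of $\eta''$ through $\eta$ yields the subobject inclusion $[\Eq(f)]_{\Gamma,0}\leq[\Eq(f)]_{\Gamma'',0}$; intersecting with $\Ker(\pi_1)$ then gives $[f]_{\Gamma,1}\leq[f]_{\Gamma'',1}=0$.

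For $(2)\Rightarrow(1)$, I would work with the horizontal pasting of naturality squares
\[
\xymatrix@C=16pt{
\Eq(f) \ar[r]^-{\eta_{\Eq(f)}} \ar[d]_-{\pi_1} & HI(\Eq(f)) \ar[r]^-{H\theta_{I(\Eq(f))}} \ar[d]|-{HI(\pi_1)} & HUFI(\Eq(f)) \ar[d]^-{HUFI(\pi_1)} \\
A \ar[r]_-{\eta_A} & HI(A) \ar[r]_-{H\theta_{I(A)}} & HUFI(A).
}
\]
By $\Gamma$-normality of $f$ the left square is a pullback, and the outer rectangle encodes $\Gamma''$-normality of $f$; so by pullback pasting it suffices to prove the right square is a pullback. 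Since $H$ is fully faithful and preserves limits, this reduces to showing that $I(\pi_1)\colon I(\Eq(f))\to I(A)$, which is a split epimorphism in $\Xs$ with section $I(\delta)$, is $\Gamma'$-trivial. Taking vertical kernels in the pullback left square and using fully faithfulness of $H$ gives $\Ker(I(\pi_1))\cong\Ker(f)$, which belongs to $\Ys$ by hypothesis.

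The main obstacle is the remaining claim: a split epimorphism in $\Xs$ whose kernel lies in $\Ys$ is $\Gamma'$-trivial. This is where the protoadditivity of $F$ is essential: applying $F$ to the split short exact sequence determined by $I(\pi_1)$ produces another split short exact sequence, and the $\Gamma'$-naturality square on $I(\pi_1)$ becomes a morphism of split short exact sequences in $\Xs$. Because $\theta_{\Ker(I(\pi_1))}$ is an isomorphism (its domain lying in $\Ys$), the split short five lemma in the homological category $\Xs$ then forces this square to be a pullback, completing the proof.
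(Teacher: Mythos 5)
Your proof is correct and, for the substantive direction $(2)\Rightarrow(1)$, follows essentially the same route as the paper: factor the unit of the composite reflection through $\eta$, use $\Gamma$-normality to get the first naturality square on $\pi_1$ as a pullback, identify $\Ker(I(\pi_1))$ with $\Ker(f)\in\Ys$ by taking kernels in that pullback, and then combine protoadditivity of $F$ with the split short five lemma to force the $\theta$-square to be a pullback; the paper writes this as a three-row diagram rather than a horizontal pasting, but the content is identical. The one genuine divergence is in $(1)\Rightarrow(2)$: to see that a $\Gamma''$-normal $f$ is $\Gamma$-normal, the paper cancels pullbacks against the middle naturality square using the fact that it is a double $\Ec$-extension (a lemma of Gran on central extensions), whereas you use monotonicity of radicals, $[f]_{\Gamma,1}=[\Eq(f)]_{\Gamma,0}\cap\Ker(\pi_1)\leq[\Eq(f)]_{\Gamma'',0}\cap\Ker(\pi_1)=[f]_{\Gamma'',1}=0$, together with the characterisation of normal extensions as those with vanishing radical $[-]_{\Gamma,1}$. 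Both arguments are valid in this setting; yours trades the external cancellation lemma for the normalisation-functor machinery (needed to pass from $[f]_{\Gamma,1}=0$ back to normality of $f$), which is indeed available here thanks to axiom \hl{M} and Theorem \Ref{firstderived}.
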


\begin{proof}
Let us assume that $f$ is a $\Gamma''$-normal extension. Then in the following commutative diagram the composite of the left pointing squares is a pullback: 
\[
\xymatrix{
FI(A\times_BA) \ar@<-0.5ex>[d]_{FI(\pi_1)} & I(A\times_BA) \ar@<-0.5ex>[d]_{I(\pi_1)} \ar[l] & E\times_B A \ar@{}[rd]|<<{\pullback}\ar@<-0.5ex>[d]_{\pi_1} \ar[l] \ar[r] & A \ar[d]^f\\
FI(A) \ar@{.>}@<-0.5ex>[u] & I(A) \ar[l]  \ar@{.>}@<-0.5ex>[u] & A \ar[l] \ar[r]_f  \ar@{.>}@<-0.5ex>[u] & B}
\]
Since the middle square is in $\Ec^1$, this implies that this square is, in fact, a pullback (see \cite[Lemma 1.1]{Gran:Central-Extensions}), and we find that $f$ is a $\Gamma$-normal extension. 
The fact that $\Ker (f)$ lies in $\Ys$ for any $\Gamma''$-normal extension $f$ has already been remarked above.

Let us assume now that $f$ is $\Gamma$-normal and $\Ker (f) \in \Ys$. Then one sees that in the commutative diagram 
\begin{center}
\begin{equation*}
\vcenter{
\xymatrix{
\Ker (f) \ar@{->}[rr]^-{\ker(\pi_1)}  \ar[d]_-{\eta_{\Ker(f)}} &&\Eq(f) \ar@<0.5ex>[r]^-{\pi_1}   \ar[d]_-{\eta_{\Eq(f)}}  \ar@{}[rd]|-{(3)} &A  \ar[d]^-{\eta_A}  \ar@<0.5ex>@{.>}[l]\\
I(\Ker(f)) \ar[rr]^-{I(\ker(\pi_1))}  \ar[d]_{\theta_{I(\Ker(f))}} &&I(\Eq(f)) \ar@<0.5ex>[r]^-{I(\pi_1)} \ar[d]_-{\theta_{I(\Eq(f))}}  \ar@{}[rd]|-{(4)} &I(A) \ar[d]^-{\theta_{I(A)}}  \ar@<0.5ex>@{.>}[l] \\
FI(\Ker (f)) \ar[rr]_-{FI(\ker(\pi_1))} &&FI(\Eq(f)) \ar@<0.5ex>[r]^-{FI(\pi_1)}   &FI(A) \ar@<0.5ex>@{.>}[l]} 
}
\end{equation*}
\end{center}
the square $(3)$ is a pullback (since $f$ is $\Gamma$-normal) and $\eta_{\Ker(f)}$ an isomorphism. It follows that the second row is a split short exact sequence. By protoadditivity of $F$, the third row is  also a split short exact sequence, and we obtain by protoadditivity of $\Cs$ that $(4)$ is a pullback because $\theta_{I(\Ker(f))}$ is an isomorphism, by assumption. Thus $(3)$+$(4)$ is a pullback and $f$ is a $\Gamma''$-normal extension.
\end{proof}

\begin{theorem}\label{secondderived}
One has a composite of closed Galois structures
\[
\xymatrix{
\Gamma''_1 \colon (\Ext_\Ec(\Cs),\Ec^1) \ar@^{->}[rr]^-{(I_1,H_1,\eta^1,\epsilon^1)}&&(\NExt_\Gamma(\Cs),\Fc^1) \ar@^{->}[rr]^-{(F_1,U_1,\theta^1,\zeta^1)}&&(\NExt_{\Gamma''}{}(\Cs),\Gc^1)
}
\]
of type \Refi{GStypeC} where $F_1= (F\circ I)_1\circ H_1$ is protoadditive.
\end{theorem}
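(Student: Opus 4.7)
The strategy is to apply Theorem \ref{firstderived} directly to the composite Galois structure $\Gamma''$ and then factor the resulting derived structure through the $\Gamma_1$ obtained from Theorem \ref{firstderived} applied to $\Gamma$. For the first step, it remains only to check that $F\circ I$ preserves pullbacks of type \Ref{special-pullback}. Since $I$ preserves them by assumption, I would verify that $F$ does: given such a pullback in $\Xs$ with $g$ split by $s$, the kernel $\Ker(g)$ fits into split short exact sequences as the kernel of both vertical arrows; protoadditivity of $F$ preserves both, and the split short five lemma in $\Ys$ forces the induced comparison morphism $F(A)\to F(D)\times_{F(C)}F(B)$ to be an isomorphism. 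Theorem \ref{firstderived} then yields a closed Galois structure $(\Ext_\Ec(\Cs),\Ec^1)\to (\NExt_{\Gamma''}(\Cs),\Gc^1)$ with reflector $(F\circ I)_1$ and the required iterated properties, which will be identified with the claimed $\Gamma''_1$.

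By Theorem \ref{Caracterisation}, the full inclusion $H''_1$ of $\NExt_{\Gamma''}(\Cs)$ into $\Ext_\Ec(\Cs)$ factors as $H_1\circ U_1$, with $U_1:\NExt_{\Gamma''}(\Cs)\hookrightarrow \NExt_\Gamma(\Cs)$ the full inclusion. Since $H_1$ is fully faithful, the adjunction $(F\circ I)_1\dashv H''_1$ shows that $F_1 := (F\circ I)_1\circ H_1$ is left adjoint to $U_1$, and the composite adjunction $(F_1\dashv U_1)\circ(I_1\dashv H_1)$ recovers $(F\circ I)_1\dashv H''_1$. With $\theta^1$ the unit induced from that of $(F\circ I)_1\dashv H''_1$ via this factorization and $\zeta^1$ the identity, the closed Galois structure axioms for $\Gamma'_1 = (F_1,U_1,\theta^1,\zeta^1)$ from $(\NExt_\Gamma(\Cs),\Fc^1)$ to $(\NExt_{\Gamma''}(\Cs),\Gc^1)$ all follow from those of $\Gamma_1$ and $\Gamma''_1$, using $\Gc^1\subseteq \Fc^1$.

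The main task is proving $F_1$ protoadditive. Applying Lemma \ref{radref} to $\Gamma'_1$ on $\NExt_\Gamma(\Cs)$ (pointed protomodular as a full reflective subcategory of $\Ext_\Ec(\Cs)$), this reduces to showing the radical $[-]_{\Gamma'_1,0}$ is protoadditive. By compositionality of reflections, $[f]_{\Gamma'_1,0}$ coincides via $H_1$ with $[f]_{\Gamma''_1,0}$ computed in $\Ext_\Ec(\Cs)$. For $f\in \NExt_\Gamma(\Cs)$ with $K=\Ker(f)\in \Xs$, the formula from the ``Radicals'' paragraph applied to the split extension $\pi_1:\Eq(f)\to A$ gives $[f]_{\Gamma'',1} = \Ker(f)\cap[\Eq(f)]_{\Gamma'',0}$. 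The key identification to establish is $[\Eq(f)]_{\Gamma'',0}\cap K = [K]_{\Gamma',0}$, which I would derive from the protomodular semi-direct decomposition of $\Eq(f)$ induced by the section $\delta:A\to\Eq(f)$, combined with protoadditivity of $F$ applied to the split sequence $0\to K\to\Eq(f)\to A\to 0$ after using $I$'s preservation of special pullbacks to reduce to the $\Xs$-setting. Consequently $[f]_{\Gamma'_1,0}$ is the sub-extension $([K]_{\Gamma',0}\hookrightarrow A\to 0)$ of $f$.

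With this identification, protoadditivity follows: a split short exact sequence $0\to a\to b\to c\to 0$ in $\NExt_\Gamma(\Cs)$ descends level-wise, via the split short five lemma in $\Cs$, to a split short exact sequence $0\to K_a\to K_b\to K_c\to 0$ in $\Xs$; protoadditivity of $[-]_{\Gamma',0}$ (equivalent to that of $F$ by Lemma \ref{radref}) preserves this, and the identification lifts the result to a split short exact sequence of $[-]_{\Gamma'_1,0}$-values. The main obstacle is the identification $[\Eq(f)]_{\Gamma'',0}\cap K=[K]_{\Gamma',0}$, which hinges on tracking the semi-direct product structure of $\Eq(f)$ through both $I$ and $F$.
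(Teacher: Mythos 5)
Your proposal is correct and follows essentially the same route as the paper: the whole weight rests on identifying, for a $\Gamma$-normal extension $f$, the radical $[f]_{\Gamma'',1}$ with $[\Ker(f)]_{\Gamma',0}$ (the paper's Lemma~\ref{Shape-protoadditive-radical}, obtained there by the same combination of $\Gamma$-normality of $f$, i.e.\ triviality of $\pi_1$, and protoadditivity of $F$ on the resulting split exact sequence in $\Xs$), and then deducing protoadditivity of $F_1$ from that of $[-]_{\Gamma',0}$ via commutation of limits on levelwise kernels. Your additional bookkeeping (that a protoadditive $F$ preserves pullbacks of type~\Ref{special-pullback} via the split short five lemma, and the factorisation $H''_1=H_1\circ U_1$ through Theorem~\ref{Caracterisation} giving $F_1\dashv U_1$) is sound and merely makes explicit what the paper leaves implicit.
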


\begin{proof} it suffices to check that $F_1$ is protoadditive. This follows from the next lemma.
\end{proof}

 \begin{lemma}\label{Shape-protoadditive-radical} Let $f \colon A \rightarrow B$ be a $\Gamma$-normal extension. Then 
 \[
[f]_{\Gamma'',1} =\overline{0}_{\Ker (f)}^{\Fc}=[\Ker(f)]_{\Gamma'',0} .
 \]
 This implies that $[-]_{\Gamma_1'',1}$ restricted to $\NExt_\Gamma(\Cs)$ is protoadditive and therefore, that the restriction $F_1$ of $(F\circ I)_1$ to $\NExt_\Gamma(\Cs)$ is also protoadditive.
  \end{lemma}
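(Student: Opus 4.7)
The plan is to explicitly compute the radical $[f]_{\Gamma'',1}$ using the general description recalled earlier in the section, and then match it to $[\Ker(f)]_{\Gamma'',0}$ by exploiting the $\Gamma$-normality of $f$ together with the protoadditivity of $F$.

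First I would recall that, for any extension $f\colon A\to B$, the radical $[f]_{\Gamma'',1}$ is described (in diagram \Ref{magicdiagram} style) as the intersection
\[
[f]_{\Gamma'',1}\;=\;\Ker(\pi_1)\,\cap\,[\Eq(f)]_{\Gamma'',0}
\]
taken inside $\Eq(f)$, where $\pi_1\colon\Eq(f)\to A$ is the first projection of the kernel pair of $f$. Using Theorem \ref{Caracterisation}, $f$ being $\Gamma$-normal means exactly that the naturality square associated to $\pi_1$ is a pullback and that $\Ker(f)\in\Xs$. The pullback property realises $\Eq(f)$ as $A\times_{HI(A)}HI(\Eq(f))$ so that, along the split epi $\pi_1$, the kernel $\Ker(\pi_1)=\Ker(f)$ coincides with $\Ker(HI(\pi_1))$. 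In particular, via the isomorphism $\eta_{\Ker(f)}$, the inclusion $\iota\colon\Ker(f)\hookrightarrow\Eq(f)$ identifies with the kernel of the split epimorphism $I(\pi_1)$ in $\Xs$, so we obtain a split short exact sequence
\[
\xymatrix{0 \ar[r] & \Ker(f) \ar[r] & I(\Eq(f)) \ar@<0.5ex>[r]^-{I(\pi_1)} & I(A) \ar@<0.5ex>@{.>}[l]^-{I(\Delta)} \ar[r] & 0}
\]
in $\Xs$.

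Next I would apply the protoadditive functor $F$ to this split short exact sequence: by protoadditivity, $F(I(\iota))$ is the kernel of the split epi $FI(\pi_1)$, hence a monomorphism. Consequently $UFI(\iota)$ is a monomorphism, so the naturality of the unit $\eta''=\theta_{I(-)}\circ \eta$ of the composite adjunction gives
\[
\Ker(\eta''_{\Eq(f)}\circ\iota)\;=\;\Ker(UFI(\iota)\circ\eta''_{\Ker(f)})\;=\;\Ker(\eta''_{\Ker(f)})\;=\;[\Ker(f)]_{\Gamma'',0}.
\]
But the left-hand side is exactly the intersection $\Ker(f)\cap[\Eq(f)]_{\Gamma'',0}=[f]_{\Gamma'',1}$. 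The identification with $\overline{0}^{\Fc}_{\Ker(f)}$ is then immediate from the definition of the closure operator applied to the zero subobject of $\Ker(f)$. The main obstacle in this part is not the calculation itself but making sure that the pullback-identification of $\Ker(\pi_1)$ with $\Ker(HI(\pi_1))$ produces exactly the kernel of $I(\pi_1)$ in $\Xs$ on which protoadditivity of $F$ can be applied; this is where $\Gamma$-normality of $f$ is essential.

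For the second assertion, I would note that on $\NExt_\Gamma(\Cs)$ the formula just obtained gives $[-]_{\Gamma_1'',1}=[\Ker(-)]_{\Gamma'',0}=[\Ker(-)]_{\Gamma',0}$, since $\Ker(f)\in\Xs$ forces $[\Ker(f)]_{\Gamma,0}=0$. The kernel functor $\Ker\colon\NExt_\Gamma(\Cs)\to\Xs$ sends split short exact sequences of $\Gamma$-normal extensions to split short exact sequences in $\Xs$ (a standard consequence of protomodularity applied rowwise), and the radical $[-]_{\Gamma',0}$ is protoadditive by Lemma \ref{radref} together with protoadditivity of $F$. The composition of two protoadditive functors is protoadditive, so $[-]_{\Gamma_1'',1}|_{\NExt_\Gamma(\Cs)}$ is protoadditive. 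Finally, applying Lemma \ref{radref} once more at the level of $\Ext_\Ec(\Cs)$ (equivalently, at the level of $\NExt_\Gamma(\Cs)$, which is the relevant domain of $F_1$), the protoadditivity of this radical transfers to the reflector $F_1=(F\circ I)_1\circ H_1$, as required.
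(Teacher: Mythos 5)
Your proof is correct and follows essentially the same route as the paper: both arguments use $\Gamma$-normality to produce the split short exact sequence $0\to\Ker(f)\to I(\Eq(f))\to I(A)\to 0$ in $\Xs$, apply protoadditivity of $F$ to it, and then deduce the second assertion from the fact that $\Ker$ preserves split short exact sequences while $[-]_{\Gamma'',0}$ restricted to $\Xs$ is protoadditive. The only cosmetic difference is that you extract $[f]_{\Gamma'',1}=[\Ker(f)]_{\Gamma'',0}$ via the intersection formula for the radical and a monomorphism argument, where the paper reads it off as exactness of the left-hand column of a $3\times 3$-style diagram.
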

 \begin{proof}
First note that $\Ker(f)$ lies in $\Xs$ since $f$ is a $\Gamma$-normal extension. Then, as in the proof of Theorem \ref{Caracterisation}, we see that the last row in the following diagram is exact:
\[
\xymatrix{
[f]_{\Gamma'',1} \ar[rr]^-{\ker( [\pi_1]_{\Gamma'',0})} \ar@{->}[d]_-{k} && [\Eq(f)]_{\Gamma'',0} \ar@<0.5ex>[rr]^-{ [\pi_1]_{\Gamma'',0}} \ar[d] &&[A]_{\Gamma'',0}\ar[d]^-{\mu_A} \ar@<0.5ex>@{.>}[ll] \\
\Ker (f)  \ar[d]_-{\eta_{\Ker (f)}} \ar[rr]_-{\ker(\pi_1)}&& \Eq(f) \ar@<0.5ex>[rr]^-{\pi_1}   \ar[d] && A \ar@<0.5ex>@{.>}[ll] \ar[d] \\
FI(\Ker (f)) \ar[rr]_-{FI(\ker(\pi_1))} && FI(\Eq(f)) \ar@<0.5ex>[rr]^-{FI(\pi_1)} && FI(A)  \ar@<0.5ex>@{.>}[ll].
}
\]
It follows that the left column in this diagram is also exact (note that the right solid bottom square is a double $\Ec$-extension) and the first part of the result follows. Now, let us start with a split short exact sequence 
\[
\xymatrix{
0 \ar[r]&K_1 \ar[r]^-{\ker(p_1)}  \ar[d]_-k &A_1 \ar@<0.5ex>[r]^-{p_1} \ar[d]_-a&B_1 \ar[d]^-b  \ar@<0.5ex>@{.>}[l]^{s_1} \ar[r]&0\\
0 \ar[r]&K_0 \ar[r]_-{\ker(p_0)}&A_0 \ar@<0.5ex>[r]^{p_0}&B_0 \ar@<0.5ex>@{.>}[l]^-{s_0}  \ar[r]&0
}
\]
in $\NExt_\Gamma(\Cs)$. Then, by commutation of limits, one finds that the first row in the commutative diagram
\[
\xymatrix{
\Ker(k) \ar[d] \ar[r]&\Ker(a)\ar[d] \ar@<0.5ex>[r]&\Ker(b)\ar[d] \ar@<0.5ex>@{.>}[l]\\
K_1 \ar[r]^-{\ker(p_1)}  \ar[d]_-k &A_1 \ar@<0.5ex>[r]^-{p_1} \ar[d]_-a&B_1 \ar[d]^-b  \ar@<0.5ex>@{.>}[l]^{s_1}\\
K_0 \ar[r]_-{\ker(p_0)} &A_0 \ar@<0.5ex>[r]^{p_0} &B_0 \ar@<0.5ex>@{.>}[l]^-{s_0}
}
\]
is a split short exact sequence and, since $[-]_{\Gamma'',0}$ restricted to $\Xs$ is protoadditive, that one has a split short exact sequence
\[
\xymatrix{
0 \ar[r]& [k]_{\Gamma'',1} \ar[r] & [a]_{\Gamma'',1} \ar@<0.5ex>[r] & [b]_{\Gamma'',1}  \ar@<0.5ex>@{.>}[l]  \ar[r]&0.
}
\]
\end{proof} 

\subsection{Higher extensions and presentations}

In the sequel, we will use the notation for the finite ordinals: $0=\emptyset$ and $n=\{0,\hdots, n-1\}$ for $n\geq 1$. We now adopt many notations from \cite{EGVdL}. We write $\mathscr{P}(n)$ for the poset of subsets of $n$ viewed as a category. Let $(\Cs,\Ec)$ be a pair that satisfies the axioms \hl{E1} to \hl{E3}. The category $\Ext^n_\Ec(\Cs)$ is the full subcategory of $\Cs^{(\mathscr{P}(n)^{\mathsf{op}})}$ determined by the $n$-fold $\Ec$-extensions.  An \emph{$n$-fold $\Ec$-extension $A$} is a functor $A \colon \mathscr{P}(n)^{\mathsf{op}}\to \Cs$ such that for all $0\neq I \subseteq n$, the limit $\lim_{J\subsetneq I} A(J)$ exists and the induced morphism $A(I) \to \lim_{J\subsetneq I} A(J)$ is in $\Ec$. We shall use the notation $A_S=A(S)$, $a_{S}^{T}=A(S,T)\colon A(T) \to A(S)$ (for $S\subseteq T\subseteq n$),  $a_i=a_{n\setminus \{i\}}^n=A(n\setminus \{i\},n)$ and $A=(A_S)_{S\subseteq n}$. Let us define, for $i\in \Nbb$, $s_i \colon \mathbb{N} \to \mathbb{N}$ by
\[
s_i(k)=\left\lbrace \begin{array}{ll}\text{$k$}&\text{if $k<i$}\\
\text{$k+1$} &\text{if $k\geq i$}
 \end{array}\right.
 \]
 and $-^i\colon \mathscr{P}(\mathbb{N}) \to \mathscr{P}(\mathbb{N})$ by $S^i=\{ s_i(k)\, | \,k\in S \}$. 
For every $n\geq 0$, one has a pair $(\Ext_\Ec^n(\Cs),\Ec^n )$ which satisfies the axioms  \hl{E1} to \hl{E3}.  For any $0\leq i < n$, there is also an isomorphism
\[
\delta_i \colon  (\Ext_\Ec^n(\Cs),\Ec^n) \to (\Ext_{\Ec^{n-1}} (\Ext^{n-1}_\Ec(\Cs)) , (\Ec^{n-1})^1)
\]
which maps a $n$-extension $A$ to the natural transformation
\[
\delta_i(A)=(a_S^{S^i\cup\{i\}})_{S\subseteq n-1} \colon (A_{S^i\cup \{i\}})_{S\subseteq n-1} \to (A_{S^i})_{S\subseteq n-1}
\] 
and an arrow $f\colon A\to B$ between $n$-extensions to:
\[
\xymatrix{
(A_{S^i\cup \{i\}})_{S\subseteq n-1} \ar[rr]^-{(f_{S^i\cup\{i\}})_{S\subseteq n-1}} \ar[d] &&(B_{S^i\cup \{i\}})_{S\subseteq n-1} \ar[d]\\
(A_{S^i})_{S\subseteq n-1} \ar[rr]_-{(f_{S^i})_{S\subseteq n-1}} &&(B_{S^i})_{S\subseteq n-1}}
\]
Actually, the class $\Ec^n$ is defined inductively by transport along the isomorphism $\delta_{n-1}$ and it is only proved afterwards that, for $i=0,\hdots,n-2$, $\delta_i$ also preserves extensions (see \cite[Proposition 1.16]{EGoeVdL} for more details). Here we have made the identifications  $\Ext^0_\Ec(\Cs) = \Cs$, $\Ext^1_\Ec(\Cs)=\Ext_\Ec(\Cs)$ and the corresponding identifications of classes of extensions.
We define also $\delta_i^+= \Cod \circ \delta_i$ and $\delta_i^-=\Dom \circ \delta_i$. Here the functors $\Cod$ and $\Dom$  are the codomain and domain functors (they send an arrow to its codomain and domain, respectively).
\begin{lemma}\label{carredeltaij} For $A$ in $\Ext^n_\Ec(\Cs)$ and $0\leq i<j < n$, the diagram in $\Ext^{n-2}_\Ec(\Cs)$
\[
 \xymatrix{
. \ar[r]^-{\delta_{j-1}(\delta^-_i(A))} \ar[d]_-{\delta_{i}(\delta^-_j(A))}&. \ar[d]^-{\delta_{i}(\delta^+_j(A))} \\
.\ar[r]_-{\delta_{j-1}(\delta^+_i(A))}&.
}
\]
is commutative.
\end{lemma}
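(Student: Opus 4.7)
The plan is to reduce the claim to a pointwise verification in $\Cs$ indexed by $T\subseteq n-2$, and then observe that at each such $T$ the square in question is just the image under $A$ of a commutative square of inclusions in $\mathscr{P}(n)^{\mathrm{op}}$.

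First I would unravel the four corners. For a fixed $T\subseteq n-2$, applying the definitions of $\delta_i^\pm$ twice shows that the four objects of the square, evaluated at $T$, are
\[
A_{(T^{j-1}\cup\{j-1\})^i\cup\{i\}},\; A_{(T^{j-1})^i\cup\{i\}},\; A_{(T^{j-1}\cup\{j-1\})^i},\; A_{(T^{j-1})^i}
\]
(top-left, top-right, bottom-left, bottom-right) when read as $\delta_{j-1}(\delta_i^\pm(A))$, and the same four objects with $i$ and $j$ swapped in the order of shifts when read as $\delta_i(\delta_j^\pm(A))$. The matching of the two descriptions reduces to the cosimplicial-type identity
\[
s_i\circ s_{j-1}=s_j\circ s_i\qquad (i<j),
\]
together with the immediate facts $\{j-1\}^i=\{j\}$ and $\{i\}^j=\{i\}$. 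Verifying this identity is a routine case distinction on whether the argument lies in $[0,i)$, $[i,j-1)$, $\{j-1\}$, or $[j,\infty)$. From it one deduces $(S^{j-1})^i=(S^i)^j$ for every $S$, and in particular $(T^{j-1}\cup\{j-1\})^i\cup\{i\}=(T^i\cup\{i\})^j\cup\{j\}$, with analogous equalities for the remaining three corners.

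Next I would identify the four morphisms. Each arrow in the square is, by definition of $\delta_{\bullet}(\delta_{\bullet}^{\pm}(A))$ evaluated at $T$, a component of the structural morphisms of $A$, namely of the form $a_{S}^{S\cup\{k\}}$ for $k\in\{i,j\}$ and $S$ the appropriate subset of $n$. Concretely, after the identifications above, the square at index $T$ becomes
\[
\xymatrix@C=50pt{
A_{U\cup\{i,j\}} \ar[r]^-{a_{U\cup\{i\}}^{U\cup\{i,j\}}} \ar[d]_-{a_{U\cup\{j\}}^{U\cup\{i,j\}}} & A_{U\cup\{i\}} \ar[d]^-{a_{U}^{U\cup\{i\}}} \\
A_{U\cup\{j\}} \ar[r]_-{a_{U}^{U\cup\{j\}}} & A_{U}
}
\]
with $U=(T^i)^j=(T^{j-1})^i$.

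Finally I would invoke functoriality of $A\colon\mathscr{P}(n)^{\mathrm{op}}\to\Cs$: the underlying square of inclusions $U\subset U\cup\{i\},U\cup\{j\}\subset U\cup\{i,j\}$ commutes in $\mathscr{P}(n)^{\mathrm{op}}$, so its image under $A$ commutes in $\Cs$. Since the composites of $(n-2)$-extension morphisms are computed pointwise in $T\subseteq n-2$, this pointwise commutativity implies commutativity of the whole square in $\Ext^{n-2}_\Ec(\Cs)$. The only non-trivial ingredient is the combinatorial identity for $s_i\circ s_{j-1}$, and even this is elementary; the rest is bookkeeping, so I expect no real obstacle beyond keeping the indices straight.
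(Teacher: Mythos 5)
Your proof is correct. The paper states Lemma~\ref{carredeltaij} without any proof (it is treated as routine index bookkeeping), so there is no argument of the author's to compare against; your pointwise verification is exactly the expected one. The key combinatorial input, $s_i\circ s_{j-1}=s_j\circ s_i$ for $i<j$ (the standard cosimplicial identity for the ``skip'' maps), is stated and used correctly, the identifications $\{j-1\}^i=\{j\}$ and $\{i\}^j=\{i\}$ hold, and the resulting square at each $T\subseteq n-2$ is indeed the image under the functor $A\colon\mathscr{P}(n)^{\mathrm{op}}\to\Cs$ of the commuting square of inclusions $U\subseteq U\cup\{i\},U\cup\{j\}\subseteq U\cup\{i,j\}$ with $U=(T^i)^j=(T^{j-1})^i$, so functoriality finishes the argument.
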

Let us also recall that for every $0\leq i <n$, one has an isomorphism
\[
\rho_i \colon (\Ext^n_\Ec(\Cs),\Ec^n) \to (\Ext^{n-1}_{\Ec^1}(\Ext_\Ec(\Cs)) ,(\Ec^1)^{n-1})
\]
which maps a $n$-fold $\Ec$-extension $A$ to
\[
\rho_i(A)=(a_{S^i}^{S^i\cup \{ i\}} \colon A_{S^i\cup \{ i\}} \to A_{S^i})_{S\subseteq n-1}.
\]
Using this, we can also define the isomorphism
\[
(\rho_i,\rho_i) \colon \Ext_{\Ec^n} (\Ext^n_\Ec(\Cs)) \to  \Ext_{(\Ec^1)^{n-1}}(\Ext_{\Ec^1}^{n-1}(\Ext_\Ec(\Cs)))
\] 
which is determined on objects by 
\[
(\rho_i,\rho_i) (f \colon A \to B) =  \rho_i(f) \colon \rho_i(A) \to  \rho_i(B)
\]
The following rules hold (see \cite[Lemma 4.2]{EverHopf}): for $i<j$
\[
\delta_{j-1} \circ \rho_i= (\rho_i,\rho_i) \circ \delta_j  
\]
and for $j<i$
\[
\delta_j\circ \rho_i= (\rho_{i-1},\rho_{i-1})\circ \delta_j.
\]
Let us recall from  \cite{RVdL}  the following definition. Let $n\geq 1$. The \emph{direction} of a $n$-fold $\Ec$-extension $A$ is 
\[
\Ker^n (A)=\bigcap_{i=0}^{n-1} \Ker (a_i).
\]
This defines a functor 
\[
\Ker^n \colon \Ext^n_\Ec(\Cs) \to \Cs
\]
such that 
\[
\Ker^n (A)\cong \Ker (\Ker^{n-1} (\rho_i(A))) \cong \Ker^{n-1}(\Ker (\delta_i(A)))
\]
for every $i=0,\hdots,n-1$. We shall also consider the functor 
\[
\iota^n\colon \Cs \to  \Ext_\Ec^n(\Cs)
\]
that maps an object $A$ in $\Cs$ to the $n$-fold $\Ec$-extension given by 
\[
(\iota^n(A))_S=\left\lbrace \begin{array}{ll}  A &\text{if $S = n$}\\ 0 &\text{if $S \neq n$}
 \end{array}\right.
 \]
The left adjoint of $\iota^n$ is $\Dom^n \colon \Ext_\Ec^n(\Cs)\to \Cs $ given by $\Dom^n(A)=A_n$ and the right adjoint of $\iota^n$ is $\Ker^n$. Note that $\Ker^n(\iota^n(A))\cong A =\Dom^n (\iota^n(A))$ and that 
\[
\Dom^n (A)\cong \Dom (\Dom^{n-1} (\rho_i(A))) \cong \Dom^{n-1}(\Dom(\delta_i(A))).
\]
There also exists a functor $\Cod^n\colon \Ext_\Ec^n(\Cs) \to \Cs$ given by $\Cod^n(A)=A_0$ with properties similar to the properties of the functor $\Dom^n$.

\paragraph{Projective objects and presentations} 
Let $(\Cs,\Ec)$ satisfies \hl{E1} to \hl{E3}. An object $P$ of $\Cs$ is \emph{$\Ec$-projective} if for any morphism $f\colon A \to B $ in $\Ec$ the function
\[
\mathrm{Hom}_\Cs(P,f)\colon \mathrm{Hom}_\Cs(P,A)\to\mathrm{Hom}_\Cs(P,B)
\]
is surjective. One says that $\Cs$ has enough $\Ec$-projective objects if every object $C$ of $\Cs$ has at least a \emph{1-fold $\Ec$-projective presentation}, i.e.\ there exists a morphism $f\colon P \to C$ in $\Ec$ with an $\Ec$-projective domain. Let us recall that, if $\Cs$ has enough $\Ec$-projective objects, $\Ext^n_\Ec(\Cs)$ has enough $\Ec^n$-projective objects and an object $P$ in $\Ext^n_\Ec(\Cs)$ is $\Ec^n$-projective if and only if  $P_S$ is $\Ec$-projective for every $S\subseteq n$. For $C$  in $\Cs$ and $P$ in $\Ext_\Ec^n(\Cs)$ ($n\geq 2$), one says that $P$ is a \emph{$n$-fold $\Ec$-projective presentation} of $C$ if $P_S$ is $\Ec$-projective for every $S\neq 0$ and $\Cod^n(P)=P_0=C$. One denotes the category of $n$-fold $\Ec$-projective presentations by $\Pres_\Ec^n(\Cs)$. For $C$ in $\Cs$, we write $\Pres_\Ec^n(C)$ for the fibre over $C$ of the functor $\Cod^n\colon \Pres_\Ec^n(\Cs) \to \Cs$. Note that there is at least one morphism between any two objects of $\Pres_\Ec^n(C)$.

  \begin{proposition}\label{enough-projectives}
   Let  $(F,U,\eta,\epsilon): \Cs \rightharpoonup \Xs$ be an adjunction, 
   \[
   \Gbb=(G=F\circ U, \epsilon\colon G\to 1_\Cs, \delta \colon G \to G^2)
   \]
   the induced comonad on $\Cs$, and $\Pc$ the class of morphisms in $\Cs$ which are sent by $U$ to split epimorphisms in $\Cs$. Then $\Cs$ has enough $\Pc$-projective objects and $(\Cs,\Pc)$ satisfies \hli{E1} to \hli{E3}.\qed
 \end{proposition}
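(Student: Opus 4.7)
The plan is to verify all four points by translating back and forth across the adjunction $F\dashv U$, exploiting two elementary facts: split epimorphisms are stable under both composition and pullback in any category, and right adjoints preserve pullbacks.

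First I would show that each cofree object $G(C)=FU(C)$ is $\Pc$-projective. Given $f\colon A\to B$ in $\Pc$ with chosen section $s\colon U(B)\to U(A)$ of $U(f)$, and any morphism $g\colon FU(C)\to B$, let $\bar g=U(g)\circ \eta_{U(C)}$ be the adjoint transpose of $g$. Define $h\colon FU(C)\to A$ as the morphism whose transpose is $s\circ \bar g$. Then the transpose of $f\circ h$ equals $U(f)\circ s\circ \bar g = \bar g$, so by the bijectivity of the adjunction hom-set correspondence, $f\circ h=g$. Next I would note that the counit $\epsilon_C\colon FU(C)\to C$ itself lies in $\Pc$: the triangle identity gives $U(\epsilon_C)\circ \eta_{U(C)}=1_{U(C)}$, so $\eta_{U(C)}$ is a section of $U(\epsilon_C)$. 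Combining these two observations produces, for every object $C$ of $\Cs$, a morphism in $\Pc$ with $\Pc$-projective domain, proving that $\Cs$ has enough $\Pc$-projective objects.

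It remains to check axioms \hl{E1}--\hl{E3} for the pair $(\Cs,\Pc)$. For \hl{E1}, $U$ sends any isomorphism to an isomorphism, which is a split epimorphism, so $\Pc$ contains the isomorphisms of $\Cs$. For \hl{E3}, if $f,g\in\Pc$ with sections $s,t$ of $U(f)$ and $U(g)$ respectively, then $s\circ t$ is a section of $U(g)\circ U(f)=U(g\circ f)$, whence $g\circ f\in\Pc$. For \hl{E2}, consider $f\colon A\to B$ in $\Pc$ and an arbitrary morphism $g\colon X\to B$; since $U$ is a right adjoint it preserves all limits that exist, and we may form the pullback of $U(f)$ along $U(g)$ in $\Xs$ (which exists because $U(f)$ is a split epi, hence so is any pullback of it). To obtain the pullback in $\Cs$ and conclude that its projection is again in $\Pc$, I would use the stability of split epimorphisms under pullback in $\Cs$ at the level of $U$-images: assuming the pullback $P=A\times_B X$ exists in $\Cs$ (which is the implicit setting of this part of the paper), the canonical comparison makes $U(P)$ the pullback in $\Xs$, so the projection $U(P)\to U(X)$ is a pullback of the split epi $U(f)$, hence split.

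The only place that requires a little care is the projectivity step, where one must correctly use the naturality of the adjunction bijection to verify that the chosen lift really satisfies $f\circ h=g$; everything else is a direct consequence of the preservation properties of $U$ and the well-known stability of split epimorphisms. No deeper structural hypothesis on $\Cs$ or $\Xs$ is needed beyond the existence of the relevant pullbacks.
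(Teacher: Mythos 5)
Your proof is correct and is the standard argument that the paper leaves implicit (the proposition is stated with a bare \verb|\qed| and no proof): cofree objects $FU(C)$ are $\Pc$-projective by transposing along the adjunction, the counit $\epsilon_C$ supplies the presentations via the triangle identity, and \hl{E1}--\hl{E3} follow from $U$ preserving isomorphisms, composites and pullbacks together with the stability of split epimorphisms under composition and pullback. The only point worth flagging is that the existence of the pullbacks required by \hl{E2} does not follow from the hypotheses as literally stated and must be read off from the ambient assumption that $\Cs$ is finitely complete (as it is in all the paper's applications), exactly as you note; your parenthetical deriving existence of the pullback in $\Xs$ from $U(f)$ being split is superfluous once $U$ is applied to the pullback in $\Cs$.
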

 
 This proposition can be applied to the following categories (for some semi-abelian theory $\Tbb$): $\Tbb(\Set)$ and $\Tbb(\HComp)$ which are monadic over $\Set$ (see \cite{Manes} for the monadicity of the second one) and $\Tbb(\Top)$  which is monadic over $\Top$ (\cite{Borceux-Clementino},\cite{Wyler}). In each of theses cases, the class $\Pc$ sits inside the class of regular epimorphisms (note that a regular epimorphism in $\Tbb(\Top)$ is a surjective (open) homomorphism whose codomain has the quotient topology \cite{Borceux-Clementino}).

\subsection{Higher derived Galois structures}

Using our description of the radical $\rf_{\Gamma_1}$ and using the same kind of principles  as in \cite{EverHopf}, one finds by induction and Theorem \ref{firstderived} the following

\begin{theorem} Given $\Gamma$ of type \ref{GStype1}, one has for all $n\geq 1$ a closed Galois structure 
\[
\gal{\Gamma_n\colon  (\Ext^n_\Ec(\Cs),\Ec^n)}{(I_n,H_n,\eta^n,\epsilon^n)}{(\NExt^n_\Gamma(\Cs),\Fc^n)}
\]
of type \Refi{GStype1} where $I_n$ is defined as the unique functor such that the diagram
\[
\xymatrix{
\Ext^n_\Ec(\Cs) \ar[r]^-{I_n} \ar[r] \ar[d]_-{\delta_i}& \NExt^n_\Gamma(\Cs) \ar[d]^-{\delta_i}\\
  \Ext_{\Ec^{n-1}}(\Ext^{n-1}_\Ec(\Cs))  \ar[r]_-{(I_{n-1})_1} &\NExt_{\Gamma_{n-1}}(\Ext^{n-1}_\Ec(\Cs))
}
\]
commutes for $i=0,\hdots,n-1$. The radical $[-]_{\Gamma_n,0}$ factors as $\iota^n([-]_{\Gamma,n})$ for some functor $[-]_{\Gamma,n}\colon \Ext^n_\Ec(\Cs) \to \Xs$ and one has  $[\delta_i(-)]_{\Gamma_{n-1},1}=\iota^{n-1}[-]_{\Gamma,n}$. The diagram 
\[
\xymatrix{
\Ext^n_\Ec(\Cs) \ar[r]^-{I_n} \ar[r] \ar[d]_-{\rho_i}& \NExt^n_\Gamma(\Cs) \ar[d]^-{\rho_i}\\
  \Ext_{\Ec^1}^{n-1}(\Ext_\Ec(\Cs))  \ar[r]_-{(I_{1})_{n-1}} &\NExt_{\Gamma_1}^{n-1}(\Ext_\Ec(\Cs))
}
\]
also commutes and $[\rho_i(-)]_{\Gamma_1,n-1}=\iota^1 [-]_{\Gamma,n}$  for $i=0,\hdots,n-1$. 

\end{theorem}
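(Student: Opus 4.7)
The plan is to proceed by induction on $n$, using Theorem \ref{firstderived} as the engine that produces $\Gamma_n$ from $\Gamma_{n-1}$. The base case $n=1$ is exactly Theorem \ref{firstderived} applied to $\Gamma$; the compatibility conditions involving $\delta_0$ and $\rho_0$ are tautological since those maps degenerate to identities in this range, and the description $\mu^1_f=(\overline{\mu}^1_f,0)$ recalled above shows that $[-]_{\Gamma_1,0}$ factors as $\iota^1([-]_{\Gamma,1})$ with $[-]_{\Gamma,1}=\overline{\mu}^1$.

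For the inductive step, assume $\Gamma_{n-1}$ has been constructed and is of type \ref{GStype1}. The hypotheses of Theorem \ref{firstderived} then apply to $\Gamma_{n-1}$, yielding a closed Galois structure $(\Gamma_{n-1})_1$ of type \ref{GStype1} on $(\Ext_{\Ec^{n-1}}(\Ext^{n-1}_\Ec(\Cs)),(\Ec^{n-1})^1)$. Transporting this along the isomorphism of pairs $\delta_{n-1}$ defines $\Gamma_n$ on $(\Ext^n_\Ec(\Cs),\Ec^n)$, and tautologically makes the $\delta_{n-1}$-square commute. All the axioms packaged in ``type \ref{GStype1}'' (protomodularity, \hl{E4}, \hl{E5}, \hl{M}, and preservation of pullbacks of the form \ref{special-pullback} by the reflector) transport across this isomorphism, so $\Gamma_n$ has the required shape, and $I_n$ is uniquely determined by commutativity of the $i=n-1$ square since $\delta_{n-1}$ is an isomorphism.

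The substantive work is to verify that the $\delta_i$-square also commutes for $i<n-1$ and to establish the radical factorization. For the first point, I would apply $\delta_{n-1}$ to both composites $\delta_i\circ I_n$ and $(I_{n-1})_1\circ\delta_i$, use Lemma \ref{carredeltaij} to commute $\delta_i$ past $\delta_{n-1}$ componentwise, and then invoke the inductive hypothesis (namely that $I_{n-1}$ intertwines each $\delta_j$ with $(I_{n-2})_1$) to equate the two results; since $\delta_{n-1}$ is an isomorphism, the original square commutes. The radical factorization follows by computing $\mu^n$ through $\delta_{n-1}$ via the base-case shape $\mu^1=(\overline{\mu}^1,0)$ together with the inductive factorization $[-]_{\Gamma_{n-1},0}=\iota^{n-1}[-]_{\Gamma,n-1}$: all components $\mu^n_S$ for $S\neq n$ vanish, yielding a functor $[-]_{\Gamma,n}\colon\Ext^n_\Ec(\Cs)\to\Xs$ that satisfies the defining relation $[\delta_{n-1}(-)]_{\Gamma_{n-1},1}=\iota^{n-1}[-]_{\Gamma,n}$, from which the $i<n-1$ version follows by the same interchange argument.

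Finally, the $\rho_i$-compatibility is deduced from the $\delta_j$-compatibility by exploiting the identities $\delta_{j-1}\circ\rho_i=(\rho_i,\rho_i)\circ\delta_j$ for $i<j$ and $\delta_j\circ\rho_i=(\rho_{i-1},\rho_{i-1})\circ\delta_j$ for $j<i$; a secondary induction peels $\delta$'s one at a time from $\rho_i$ and applies the $\delta$-compatibility at each step, and the radical identity $[\rho_i(-)]_{\Gamma_1,n-1}=\iota^1[-]_{\Gamma,n}$ follows by the same device. The main obstacle I anticipate is the coherence bookkeeping: showing that $\Gamma_n$ (hence $I_n$ and $[-]_{\Gamma,n}$) is genuinely independent of which $i\in\{0,\dots,n-1\}$ is used to characterise it, and that the $\delta$- and $\rho$-families of compatibilities fit together consistently with the radical. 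All the deep categorical substance (special-pullback preservation, monadicity, the split short five lemma, and the axioms \hl{E1}-\hl{E5}) is handled once by Theorem \ref{firstderived} and then simply propagates along the induction.
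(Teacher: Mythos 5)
Your proposal follows essentially the same route as the paper: the paper offers no detailed argument for this theorem, saying only that it ``follows by induction and Theorem \ref{firstderived}'' using the description of the radical of $\Gamma_1$ and the principles of \cite{EverHopf}, and that is precisely the induction you carry out (base case from Theorem \ref{firstderived}, transport of the structure along the isomorphism $\delta_{n-1}$, and propagation of the radical factorisation). Your elaboration of the remaining $\delta_i$- and $\rho_i$-compatibilities via Lemma \ref{carredeltaij} and the interchange identities $\delta_{j-1}\circ\rho_i=(\rho_i,\rho_i)\circ\delta_j$ is consistent with how these verifications are performed in the sources the paper defers to.
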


Similarly to the previous theorem one has

\begin{theorem}
Given a Galois structure $\Gamma''$ of type \ref{GStypeC}, one has for every $n\geq 1$ a composite of closed Galois structures
\[
\xymatrix{
\Gamma''_n \colon (\Ext^n_\Ec(\Cs),\Ec^n) \ar@^{->}[rr]^-{(I_n,H_n,\eta^n,\epsilon^n)}&&(\NExt_\Gamma^n(\Cs),\Fc^n) \ar@^{->}[rr]^-{(F_n,U_n,\theta^n,\zeta^n)}&&(\NExt_{\Gamma''}^n{}(\Cs),\Gc^n)
}
\]
of type \Refi{GStypeC} where $F_n= (F\circ I)_n\circ H_n$ is protoadditive.
\end{theorem}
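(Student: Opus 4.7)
The plan is to proceed by induction on $n\geq 1$, with Theorem \ref{secondderived} serving as both the base case ($n=1$) and as the engine of the inductive step.

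For the inductive step, assume the statement at level $n-1$, so that $\Gamma''_{n-1}$ is a composite closed Galois structure of type \Refi{GStypeC} on $(\Ext^{n-1}_\Ec(\Cs),\Ec^{n-1})$ with $F_{n-1}=(F\circ I)_{n-1}\circ H_{n-1}$ protoadditive. The key observation is that, via the isomorphism of pairs
\[
\delta_{n-1}\colon (\Ext^n_\Ec(\Cs),\Ec^n)\to (\Ext_{\Ec^{n-1}}(\Ext^{n-1}_\Ec(\Cs)),(\Ec^{n-1})^1),
\]
moving from level $n-1$ to level $n$ corresponds precisely to applying the $(-)_1$ construction of Theorem \ref{secondderived} to $\Gamma''_{n-1}$. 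I would therefore first verify the hypotheses of Theorem \ref{secondderived} for $\Gamma''_{n-1}$: pointed protomodularity of $\Ext^{n-1}_\Ec(\Cs)$, the axioms \hli{E4}, \hli{E5} and \hli{M} for $\Ec^{n-1}$, preservation of pullbacks of type \Refi{special-pullback} by $I_{n-1}$, and protoadditivity of $F_{n-1}$. The first four items are furnished by the preceding theorem on the simple derived structures $\Gamma_n$ (applied to the Galois structure $\Gamma$), while the last is the inductive hypothesis. Theorem \ref{secondderived} then produces a composite closed Galois structure $(\Gamma''_{n-1})_1$ whose protoadditive component is $(F_{n-1}\circ I_{n-1})_1\circ H^{(1)}_{n-1}$. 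Transporting this along $\delta_{n-1}$ defines $\Gamma''_n$.

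What remains is to identify the $F$-component so obtained with the expression $F_n=(F\circ I)_n\circ H_n$ in the statement. For this, note that the composite reflector $F\circ I\colon \Cs\to \Ys$ is itself of type \Refi{GStype1}: it preserves pullbacks of the form \Refi{special-pullback} since $I$ does by hypothesis and a protoadditive reflector between homological categories preserves such pullbacks as well. The simple-case theorem therefore applies to $F\circ I$ and yields $(F\circ I)_n$ together with a canonical isomorphism $(F\circ I)_n\cong ((F\circ I)_{n-1})_1$ via $\delta_{n-1}$. Combining this with the inductive identification $F_{n-1}\cong (F\circ I)_{n-1}\circ H_{n-1}$, the matching $(H_{n-1})_1\cong H_n$ under $\delta_{n-1}$, and the observation that $H_{n-1}\circ I_{n-1}$ is absorbed into the normalisation (since the right hand factor in $(-)_1$ lands in normal extensions), one obtains $F_n\cong (F_{n-1}\circ I_{n-1})_1\circ H^{(1)}_{n-1}$; protoadditivity is inherited from Theorem \ref{secondderived}.

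The main obstacle is this last identification: one must check that the three parallel inductive constructions, of $I_n$, of $(F\circ I)_n$, and of $F_n$, are mutually compatible through the isomorphisms $\delta_i$ and the replacement of $H_{n-1}\circ I_{n-1}$ by the normalisation. This is essentially a careful unpacking of the defining commutative squares of $I_n$ and $(F\circ I)_n$ given in the preceding theorem, and requires no new categorical ingredient beyond Theorems \ref{firstderived} and \ref{secondderived}.
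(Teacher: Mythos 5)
Your proposal is correct and follows essentially the same route as the paper: the paper offers no written-out argument here, merely stating that the result follows ``similarly to the previous theorem,'' i.e.\ by induction using Theorem~\ref{secondderived} in place of Theorem~\ref{firstderived}, together with the transport along the isomorphisms $\delta_i$ and the description of the radicals --- which is precisely your strategy. If anything, your proposal is more explicit than the paper about the one genuinely delicate point, namely the compatibility of the three inductive constructions $I_n$, $(F\circ I)_n$ and $F_n$ under $\delta_{n-1}$ and the absorption of $H_{n-1}\circ I_{n-1}$ into the $\Gamma''$-normalisation.
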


\section{Fundamental group functors}
\subsection{Galois groups}
Let us assume that 
\begin{equation}\label{GStypeP}
\Gamma \colon \gal{(\Cs,\Ec)}{(I,H,\eta,\epsilon)}{(\Xs,\Fc)}
 \end{equation}
 is a closed Galois structure such that
 \begin{itemize}
 \item $\Cs$ is pointed, protomodular, and admits intersections 
\[
[\Dom(p)]_{\Gamma,0} \cap \Ker(p)
\]
for any morphism $p\colon E \to B$ in $\Ec$;
 \item $(\Cs,\Ec)$ satisfies \hl{E4}, \hl{E5} and \hl{M};
 \item $I$ preserves pullbacks of type \Ref{special-pullback}.
 \end{itemize} 
\paragraph{Galois groupoid} The \emph{Galois groupoid}  \cite{Janelidze:Hopf} $\Gal_\Gamma(p)$ of a normal extension $p$  is the image under $I$ of the kernel pair of $p$ (viewed as an internal groupoid in $\Cs$)
\[
\xymatrix{
\Eq(p) \times_E \Eq(p) \ar@<1ex>[rr]^-{p_1}  \ar@<-1ex>[rr]_-{p_2}  \ar[rr]|-{\tau} &&\Eq(p) \ar@<1ex>[rr]^-{\pi_1} \ar@<-1ex>[rr]_-{\pi_2} \ar@(ul,ur)[]^-{\sigma} && E\ar@{>}[ll]|-{\delta}.
}
\]
That is, $\Gal_\Gamma(p)$ is the internal groupoid in $\Xs$
\[
\xymatrix{
I(\Eq(p) \times_E \Eq(p)) \ar@<1ex>[rr]^-{I(p_1)}  \ar@<-1ex>[rr]_-{I(p_2)} \ar[rr]|-{I(\tau)}&& I(\Eq(p)) \ar@<1ex>[rr]^-{I(\pi_1)} \ar@<-1ex>[rr]_-{I(\pi_2)} \ar@(ul,ur)[]^-{I(\sigma)} & &I(E)\ar@{>}[ll]|-{I(\delta)}
}
\]
\paragraph{Galois-group} The \emph{Galois group} \cite{Janelidze:Hopf}  of a normal extension $p\colon E\to B$ is defined as the object $\Gal_\Gamma (p,0)$ in the following pullback:
\[
\xymatrix{
\Gal_\Gamma (p,0) 
\ar@{}[rd]|<<<{\pullback} \ar[r] \ar[d]&HI(\Eq(p)) \ar[d]^-{\langle  HI(\pi_1),HI(\pi_2) \rangle}\\
0\ar[r]& HI(E)\times HI(E).
}
\]
The Galois group construction gives in fact a functor
\[
\Gal_\Gamma(-,0)\colon \NExt_\Gamma(\Cs) \to \Ab(\Xs)
\]
which is a Baer invariant with respect to the functor $\Cod \colon \NExt_\Gamma (\Cs)\to \Cs$, i.e.\:
\begin{proposition}\label{Gal1-Baer} \cite{EGoeVdL2} Two morphisms 
\[
f=(f_1,f_0),g=(g_1,g_0)\colon p \to q
\]
in $\NExt_\Gamma(\Cs)$
such that $f_0=\Cod (f) = \Cod(g)=g_0$ have the same image by $\Gal_\Gamma (-,0)$
\begin{equation*}
\begin{split}
\Gal_\Gamma(f,0)=\Gal_\Gamma(g,0)\colon \Gal_\Gamma(p,0)\to \Gal_\Gamma(q,0). 
\end{split}
\end{equation*}\qed
\end{proposition}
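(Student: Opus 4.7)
The plan is to interpret $HI(\langle f_1,g_1\rangle)$ as an internal natural transformation between the ``functors'' of Galois groupoids $\Gal_\Gamma(f),\Gal_\Gamma(g)\colon \Gal_\Gamma(p)\to \Gal_\Gamma(q)$ induced by $f$ and $g$, observe that its component at $0$ coincides with the identity-at-$0$ arrow of $\Gal_\Gamma(q)$, and then deduce the claim from the unit axioms of this internal groupoid.

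Since $f_0=g_0$, one has $q\circ f_1 = f_0\circ p = g_0\circ p = q\circ g_1$, so the universal property of the kernel pair $\Eq(q)$ yields $h := \langle f_1,g_1\rangle\colon E\to \Eq(q)$ with $\pi_1^q\circ h = f_1$ and $\pi_2^q\circ h = g_1$. The first step is to verify in $\Cs$ the naturality equation
\[
\tau^q\circ \langle h\circ \pi_1^p,\, \Eq(g_1)\rangle \;=\; \tau^q\circ \langle \Eq(f_1),\, h\circ \pi_2^p\rangle \colon \Eq(p)\longrightarrow \Eq(q),
\]
both sides representing, on a generic element $(a,b)\in \Eq(p)$, the composite arrow $(f_1(a),g_1(b))$ in $\Eq(q)$; this follows directly from the universal property of $\Eq(q)$, the relations $\pi_i^q\circ \Eq(f_1) = f_1\circ \pi_i^p$ and $\pi_i^q\circ \Eq(g_1) = g_1\circ \pi_i^p$, and the groupoid composition law. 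Applying $I$, and using that it preserves the pullbacks defining $\Eq(q)$ and $\Eq(q)\times_{E'}\Eq(q)$ (both of type \Refi{special-pullback} since the projections involved are split regular epimorphisms in $\Ec$), transports the equation into $\Xs$.

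To conclude, let $j_p\colon \Gal_\Gamma(p,0)\to HI(\Eq(p))$ denote the canonical monomorphism from the pullback definition of the Galois group, and precompose the transported equation with $j_p$. By definition $HI(\pi_i^p)\circ j_p = 0$ for $i=1,2$, and since $h\circ (0\to E) = (0,0) = \sigma^q\circ (0\to E')$ (because $f_1,g_1$ preserve the zero object), each composite $HI(h)\circ HI(\pi_i^p)\circ j_p$ equals $HI(\sigma^q)\circ 0$, the identity-at-$0$ arrow of $\Gal_\Gamma(q)$ precomposed with the zero map $\Gal_\Gamma(p,0)\to HI(E')$. The left and right unit axioms of $\Gal_\Gamma(q)$ then collapse both sides of the transported naturality equation to $HI(\Eq(g_1))\circ j_p$ and $HI(\Eq(f_1))\circ j_p$ respectively; since both factor through the monomorphism $j_q\colon \Gal_\Gamma(q,0)\to HI(\Eq(q))$ as $j_q\circ \Gal_\Gamma(g,0)$ and $j_q\circ \Gal_\Gamma(f,0)$ (by the very construction of $\Gal_\Gamma(-,0)$ on morphisms), one obtains $\Gal_\Gamma(f,0) = \Gal_\Gamma(g,0)$. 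The main obstacle is the internal bookkeeping: one must ensure that the various morphisms produced land in the correct composable-pair object $HI(\Eq(q))\times_{HI(E')}HI(\Eq(q))$ and invoke the unit axioms with matching sources and targets; once the naturality equation is set up in $\Cs$ and transported by $I$, however, the conclusion follows by a clean diagram chase depending only on the internal groupoid structure of $\Eq(q)$ and the preservation by $I$ of the relevant pullbacks.
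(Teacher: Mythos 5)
Your proof is correct. Note first that the paper does not actually prove this proposition: it is imported from \cite{EGoeVdL2} and stated with a qed, so there is no internal argument to compare against; what you give is the standard internalisation of the classical fact that a natural transformation whose component at a point is an identity forces the two functors to agree on the automorphism group of that point, and it goes through. Indeed $f_0=g_0$ gives $q\circ f_1=q\circ g_1$, hence $h=\langle f_1,g_1\rangle\colon E\to\Eq(q)$; the naturality equation holds in $\Cs$ by the universal property of the composable-pairs object; and after applying $I$ and precomposing with $j_p$ the unit laws collapse the two sides to $HI(\Eq(g_1))\circ j_p$ and $HI(\Eq(f_1))\circ j_p$, whence $\Gal_\Gamma(f,0)=\Gal_\Gamma(g,0)$ because $j_q$ is monic. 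One inaccuracy should be corrected, though it does not damage the argument: the pullback defining $\Eq(q)$ itself, namely $E'\times_{B'}E'$, is \emph{not} of type \Refi{special-pullback} in general, since both legs are $q$ and a normal extension need not be split; so you cannot assert that $I$ preserves it. Fortunately you never use this --- $I(h)$ is just the image of a morphism already built in $\Cs$, and the identities $HI(\pi^q_i)\circ HI(h)=HI(f_1)$ resp.\ $HI(g_1)$ follow by functoriality alone. The only pullback whose preservation you genuinely need is the object of composable pairs $\Eq(q)\times_{E'}\Eq(q)$, whose legs $\pi^q_1,\pi^q_2$ are split by the diagonal and lie in $\Ec$ by \hl{E2}, so that one really is of type \Refi{special-pullback}; simply drop the claim about $\Eq(q)$. (An equivalent route, once Theorem \Ref{description1-pi1} is available, is to transport the question along $\Gal_\Gamma(-,0)\cong[\Dom(-)]_{\Gamma,0}\cap\Ker(-)$ and argue that $f_1$ and $g_1$ differ by a ``central'' map into $\Ker(q)$; this is the same argument in different clothing.)
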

Since $\Cod$ commutes with $I_1$, one finds
\begin{corollary}\label{corobaer} The functor
\[
\Gal_\Gamma( I_1(-),0) \colon \Ext_\Ec(\Cs) \to \Ab(\Xs)
\]
is a Baer invariant with respect to $\Cod$.
\end{corollary}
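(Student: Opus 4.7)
The plan is simply to reduce to Proposition \ref{Gal1-Baer} via the observation that the codomain functor is preserved by the normalisation functor $I_1$.

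More precisely, suppose $f,g \colon a \to b$ are two morphisms in $\Ext_\Ec(\Cs)$ with $\Cod(f)=\Cod(g)$. The first step is to apply $I_1$ to obtain two morphisms
\[
I_1(f),\, I_1(g) \colon I_1(a) \to I_1(b)
\]
in $\NExt_\Gamma(\Cs)$. The second step is to verify that the codomain components of these morphisms coincide with the codomain components of $f$ and $g$ respectively, so that $\Cod(I_1(f)) = \Cod(f) = \Cod(g) = \Cod(I_1(g))$. This is immediate from the construction of $I_1$ as recalled in diagram \Ref{magicdiagram}: the normalisation $I_1(a)$ of an extension $a\colon A_1 \to A_0$ is built as a quotient of $A_1$ over the fixed codomain $A_0$, and a morphism of extensions $f=(f_1,f_0)$ induces $I_1(f) = (I_1(f)_1, f_0)$, so the codomain component is unchanged.

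Once this identification is made, Proposition \ref{Gal1-Baer} applies directly to the pair $(I_1(f), I_1(g))$ in $\NExt_\Gamma(\Cs)$ and yields
\[
\Gal_\Gamma(I_1(f),0) = \Gal_\Gamma(I_1(g),0),
\]
which is precisely the Baer-invariance of $\Gal_\Gamma(I_1(-),0)$ with respect to $\Cod \colon \Ext_\Ec(\Cs) \to \Cs$.

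There is no real obstacle here; the whole content has been packaged into the earlier proposition, and the only thing to spell out is the compatibility $\Cod \circ I_1 = \Cod$, which is a direct inspection of the normalisation construction. The statement could equivalently be phrased by saying that $\Gal_\Gamma(I_1(-),0)$ factors through the quotient of $\Ext_\Ec(\Cs)$ by the congruence identifying parallel morphisms with the same codomain component.
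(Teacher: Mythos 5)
Your proof is correct and is exactly the paper's argument: the paper derives the corollary from Proposition \ref{Gal1-Baer} via the single observation that $\Cod$ commutes with $I_1$, which is the compatibility $\Cod\circ I_1=\Cod$ you spell out from diagram \Ref{magicdiagram}. No difference in approach, only in the level of detail.
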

\begin{theorem}\label{description1-pi1}  \cite{Janelidze:Hopf} One has a natural isomorphism
\[
\Gal_\Gamma(-,0)\cong [\Dom(-)]_{\Gamma,0} \cap  \Ker(-).
\]
\end{theorem}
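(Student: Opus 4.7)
The plan is to compute $\Gal_\Gamma(p,0)$ directly from its definition, exploiting the fact that normality of $p$ forces the first projection $\pi_1\colon \Eq(p)\to E$ to be a trivial extension with respect to $\Gamma$. Throughout, $p\colon E\to B$ denotes a normal extension, and we use the description $\Ker(\pi_1)\cong \Ker(p)$ via $\ker(\pi_1)=\langle 0,\iota\rangle$, where $\iota\colon \Ker(p)\hookrightarrow E$ is the kernel inclusion and $\pi_2\circ \ker(\pi_1)=\iota$.

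First, I would unpack the defining pullback of $\Gal_\Gamma(p,0)$: since $\langle HI(\pi_1),HI(\pi_2)\rangle$ splits as the intersection of the kernels of $HI(\pi_1)$ and $HI(\pi_2)$, one has
\[
\Gal_\Gamma(p,0)\;\cong\; \Ker(HI(\pi_1))\cap \Ker(HI(\pi_2))\;\subseteq\; HI(\Eq(p)).
\]
Because the diagonal $\sigma$ splits $\pi_1$ (and hence $HI(\sigma)$ splits $HI(\pi_1)$), this intersection can equivalently be computed as the kernel of the restriction $HI(\pi_2)\circ \ker(HI(\pi_1))\colon \Ker(HI(\pi_1))\to HI(E)$.

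Next, I would exploit the normality of $p$: by definition, $\pi_1$ is a trivial extension, so the naturality square of $\eta$ applied to $\pi_1$ is a pullback. Taking fibres over $0\in HI(E)$ in this pullback yields an isomorphism $\Ker(\pi_1)\cong \Ker(HI(\pi_1))$ realised by $\eta_{\Eq(p)}\circ \ker(\pi_1)$; moreover, as noted right after the description of $\mu^1$, normality of $p$ entails $\Ker(p)\in \Xs$, so $\eta_{\Ker(p)}$ is an isomorphism. Combining this with the identification $\Ker(\pi_1)\cong \Ker(p)$ and the naturality of $\eta$, the map $HI(\pi_2)\circ \ker(HI(\pi_1))$ corresponds under the isomorphism $\Ker(HI(\pi_1))\cong \Ker(p)$ precisely to the composite
\[
\Ker(p)\;\xrightarrow{\;\iota\;}\; E\;\xrightarrow{\;\eta_E\;}\; HI(E).
\]

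Finally, one computes the kernel of this composite: by definition of $[E]_{\Gamma,0}=\Ker(\eta_E)$, it equals $\iota^{-1}([E]_{\Gamma,0})=[E]_{\Gamma,0}\cap \Ker(p)$, as desired. Naturality of the isomorphism is inherited from the naturality of $\eta$, of kernels and of the Galois group construction, and thus requires no extra work. The main technical point where care is needed is the identification in the penultimate step: one must verify that the iso $\Ker(p)\cong \Ker(HI(\pi_1))$ produced from the trivial extension $\pi_1$ converts $HI(\pi_2)\circ\ker(HI(\pi_1))$ into $\eta_E\circ \iota$. This is a naturality diagram chase using that $\pi_2\circ \ker(\pi_1)=\iota$, $\eta_{\Ker(p)}$ is invertible, and the pullback property of the trivial extension square.
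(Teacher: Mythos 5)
The paper does not prove this theorem: it is quoted from \cite{Janelidze:Hopf} and used as a black box, so there is no in-paper argument to compare against. Your reconstruction is correct and is essentially the standard one: $\Gal_\Gamma(p,0)$ is the intersection of the kernels of $HI(\pi_1)$ and $HI(\pi_2)$, normality makes the $\eta$-naturality square at $\pi_1$ a pullback so that $\eta_{\Eq(p)}$ restricts to an isomorphism $\Ker(\pi_1)\cong\Ker(HI(\pi_1))$, and then naturality of $\eta$ at $\pi_2$ together with $\pi_2\circ\ker(\pi_1)=\ker(p)$ turns $HI(\pi_2)\circ\ker(HI(\pi_1))$ into $\eta_E\circ\ker(p)$, whose kernel is $[E]_{\Gamma,0}\cap\Ker(p)$. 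One small remark: the invertibility of $\eta_{\Ker(p)}$, which you invoke in the penultimate step, is not actually needed anywhere in the chase --- the identification follows purely from the pullback property of the trivial-extension square and the naturality of $\eta$ at $\pi_2$ --- so you could drop that hypothesis from the argument without loss.
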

The Galois group $\Gal_\Gamma(p,0)$ inherits an internal group structure from the composition of the groupoid $\Gal_\Gamma(p)$ and can be viewed, internally, as the group of automorphisms of $0$. But  as we have recalled, if $\Cs$ is a finitely complete protomodular category, any of its objects underlies at most one internal group structure so that we do not have to keep track of the group structure: only the object itself will be of interest.

 \subsection{The fundamental group functors as Kan extensions}

We now consider a Galois structure
\begin{equation}\label{GStypePP}
\Gamma \colon \gal{(\Cs,\Ec)}{(I,H,\eta,\epsilon)}{(\Xs,\Fc)}
 \end{equation}
 of type \Ref{GStypeP} with a pair $(\Cs_\Pc,\Pc)$ such that \hl{E1} to \hl{E3} hold and such that $\Cs_\Pc$ is a subcategory of $\Cs$ with sufficiently projective objects with respect to  a subclass $\Pc$ of $\Ec$. Then $\Ext_\Pc^n(\Cs_\Pc)$ is a subcategory of $\Ext_\Ec^n(\Cs)$.

For $n\geq 1$ one defines a functor $\Gs^{\Gamma,\Pc}_{n} \colon \Ext^n_\Pc(\Cs_\Pc) \to \Ab(\Xs)$ by
\[
\Gs^{\Gamma,\Pc}_{n} = \Dom^{n-1}\left( \Gal_{\Gamma_{n-1}}(\delta_{n-1} I_n(-),0)\right)
\]
Note that this functor is well defined. Indeed, for $P$ a $n$-extension, 
\[
\Gal_{\Gamma_{n-1}}(\delta_{n-1} I_n(P),0) = \iota^{n-1}(A)  \in \NExt^{n-1}_\Gamma(\Cs)
\]
for some $A$ in $\Cs$, and one finds $A = \Dom^{n-1} ( \iota^{n-1}(A)) =\Ker^n( \iota^{n-1}(A)) \in \Xs$. One could also use $\Ker^{n-1}$ instead of   $\Dom^{n-1}$ in the definition of $\Gs^{\Gamma,\Pc}_{n}$. Consequently, the internal abelian group structure is also preserved ($\Ker^{n-1}$ preserves limits). Moreover, if one consider $\Gal_{\Gamma_{n-1}}(-,0)$ as a functor of type $\NExt_{\Gamma_{n-1}}(\Ext_\Ec^{n-1}(\Cs))  \to \NExt_\Gamma^{n-1}(\Cs)$, one obtains, using Theorem \ref{description1-pi1}, a mono-morphism 
\[
 \gamma \colon \Gal_{\Gamma_{n-1}}(-,0) \Rightarrow \Ker 
\]
and consequently another monomorphism
\[
\overline {\gamma} =\Ker^{n-1} \gamma \delta_{n-1} I_n  \colon \Gs_{n}^{\Gamma,\Pc}  \Rightarrow \Ker^n\circ I_n.
\]
whose component at $P$ is $\Ker^{n-1}( \ker( \eta^{n-1}_{\delta_{n-1}^- I_n (P)} \circ \ker(\delta_{n-1} I_n(P))))$.

\begin{lemma}\label{invargal2} The functor $\Gs^{\Gamma,\Pc}_{n} $ is an invariant with respect to $\Cod \circ \delta_i=\delta_i^+$ for $0 \leq i <n$.

\end{lemma}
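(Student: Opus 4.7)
The plan is to reduce, for each $i$, to Proposition \ref{Gal1-Baer} applied to the derived Galois structure $\Gamma_{n-1}$. I will use the commutation identity $\delta_j \circ I_n = (I_{n-1})_1 \circ \delta_j$ from the higher derived Galois structure theorem and the fact that normalisation preserves codomains, i.e.\ $\Cod \circ (I_{n-1})_1 = \Cod$.

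\emph{Case $i = n-1$.} Suppose $f, g \colon P \to Q$ in $\Ext^n_\Pc(\Cs_\Pc)$ satisfy $\delta_{n-1}^+(f) = \delta_{n-1}^+(g)$. Combining the two identities above,
\[
\Cod(\delta_{n-1} I_n(f)) = \delta_{n-1}^+(f) = \delta_{n-1}^+(g) = \Cod(\delta_{n-1} I_n(g)),
\]
so $\delta_{n-1} I_n(f)$ and $\delta_{n-1} I_n(g)$ are morphisms in $\NExt_{\Gamma_{n-1}}(\Ext^{n-1}_\Ec(\Cs))$ sharing a codomain. Proposition \ref{Gal1-Baer} applied to $\Gamma_{n-1}$ then yields
\[
\Gal_{\Gamma_{n-1}}(\delta_{n-1}I_n(f), 0) = \Gal_{\Gamma_{n-1}}(\delta_{n-1}I_n(g), 0),
\]
and applying $\Dom^{n-1}$ delivers $\Gs^{\Gamma,\Pc}_n(f) = \Gs^{\Gamma,\Pc}_n(g)$.

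\emph{Case $i < n-1$.} The idea is to re-express $\Gs^{\Gamma,\Pc}_n$ using $\delta_i$ in place of $\delta_{n-1}$. Define
\[
\Gs^{(i)}_n := \Dom^{n-1}\bigl(\Gal_{\Gamma_{n-1}}(\delta_i I_n(-), 0)\bigr).
\]
The argument of the previous case, with $\delta_i$ in place of $\delta_{n-1}$, shows that $\Gs^{(i)}_n$ is an invariant with respect to $\delta_i^+ = \Cod \circ \delta_i$. It therefore suffices to produce a natural isomorphism $\Gs^{(i)}_n \cong \Gs^{\Gamma,\Pc}_n$. This should follow from Theorem \ref{description1-pi1}, which describes the Galois group as $[\Dom(-)]_{\Gamma_{n-1}, 0} \cap \Ker(-)$, together with the coordinate-free identity $[\delta_j(-)]_{\Gamma_{n-1}, 1} = \iota^{n-1}[-]_{\Gamma, n}$ (valid for every $0 \leq j < n$, recorded in the higher Galois structure theorem). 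Both ingredients combine to show that the Galois object is insensitive to the choice of direction $j$, and $\Dom^{n-1}$ then produces the same object in $\Xs$ either way.

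The main obstacle is precisely this coordinate-freeness: one must verify that, even though the formula for $\Gs^{\Gamma,\Pc}_n$ singles out $\delta_{n-1}$, the resulting functor is naturally isomorphic to its analogue built from any $\delta_i$. Everything else in the argument is a routine transfer of Proposition \ref{Gal1-Baer} along $\delta_i$ via the commutation identity $\delta_i \circ I_n = (I_{n-1})_1 \circ \delta_i$.
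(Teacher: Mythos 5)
Your overall strategy is the same as the paper's: reduce to the Baer-invariance of the first Galois group functor (Proposition \Ref{Gal1-Baer}, i.e.\ Corollary \Ref{corobaer} applied to $\Gamma_{n-1}$) by showing that $\Gs_n^{\Gamma,\Pc}$ can be computed through any $\delta_i$, not just $\delta_{n-1}$. Your case $i=n-1$ is correct and is exactly the intended argument. The problem is the case $i<n-1$: you correctly isolate the coordinate-independence of the formula as ``the main obstacle,'' but you do not actually prove it, and the two ingredients you cite do not combine to give it. Theorem \Ref{description1-pi1} expresses the Galois object as $[\Dom(-)]_{\Gamma_{n-1},0}\cap\Ker(-)$ evaluated at the \emph{normal} extension $\delta_j I_n(P)$, whereas the identity $[\delta_j(-)]_{\Gamma_{n-1},1}=\iota^{n-1}[-]_{\Gamma,n}$ concerns the kernel of the unit $\overline{\eta}^1$ of the derived reflection $(I_{n-1})_1$ --- a different object, which in fact vanishes on normal extensions such as $\delta_jI_n(P)$. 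The two quantities would coincide via the formula $[f]_{\Gamma,1}=[\Dom(f)]_{\Gamma,0}\cap\Ker(f)$ only if $\delta_jI_n(P)$ were a split extension, which it is not, since its codomain $\delta_j^+(P)$ contains the non-projective object $P_0$.

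This is precisely the gap that the paper's Lemma \Ref{invargal} fills, and its proof is genuinely more involved than your sketch suggests: one passes to the $(n-2)$-nd level via the interchange square of Lemma \Ref{carredeltaij}, uses the projectivity of $\delta_i^-(P)$ and $\delta_j^-(P)$ to ensure that $\delta_{j-1}(\delta_i^-(P))$ and $\delta_i(\delta_j^-(P))$ lie in $\Split{\Ec^{n-2}}$ (so that the split-extension formula for $[-]_{\Gamma_{n-2},1}$ does apply there), rewrites the relevant intersection of radicals and kernels symmetrically in $i$ and $j$, and only then concludes after applying $\Dom^{n-2}\circ\Dom$. Note two consequences you should build into your argument: the identification $\Gs_n^{(i)}\cong\Gs_n^{\Gamma,\Pc}$ is obtained only after applying $\Dom^{n-1}$ (the Galois objects in $\Ext^{n-1}_\Ec(\Cs)$ are compared through $\delta_{j-1}$ and $\delta_i$, not directly), and it is only established on $n$-fold $\Pc$-projective presentations, not on all of $\Ext^n_\Pc(\Cs_\Pc)$ --- which is also the only generality in which the invariance is subsequently used (Lemma \Ref{Baer-Invariant2}).
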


For this we use the following 

\begin{lemma}\label{invargal} For $P$ any $n$-fold $\Pc$-projective presentation
\[
\Gs_n^{\Gamma,\Pc}(P) = \Dom^{n-1}\left(  \Gs_{1}^{\Gamma_{n-1},\Pc^{n-1}}( \delta_i P)\right)
\]
for $0\leq i <n$.
\end{lemma}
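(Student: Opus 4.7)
The plan is to first rewrite the right-hand side of the claim using the commutativity $\delta_i \circ I_n = (I_{n-1})_1 \circ \delta_i$ established in the preceding theorem on higher derived Galois structures. Unpacking the definition of $\Gs_1^{\Gamma',\Pc'}$ for an arbitrary closed Galois structure of the relevant type (where $\delta_0$ and $\Dom^0$ are the identity on $1$-fold extensions), one has $\Gs_1^{\Gamma',\Pc'}(f) = \Gal_{\Gamma'}(I'_1(f),0)$. Consequently, for each $i \in \{0,\ldots,n-1\}$,
\[
\Gs_1^{\Gamma_{n-1},\Pc^{n-1}}(\delta_i P) = \Gal_{\Gamma_{n-1}}((I_{n-1})_1\delta_i P, 0) = \Gal_{\Gamma_{n-1}}(\delta_i I_n P, 0).
\]
For $i=n-1$ this coincides with the definition of $\Gs_n^{\Gamma,\Pc}(P)$ after applying $\Dom^{n-1}$, so the lemma reduces to showing that $\Dom^{n-1}(\Gal_{\Gamma_{n-1}}(\delta_i I_n P, 0))$ does not depend on $i \in \{0,\ldots,n-1\}$.

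Next, I would apply Theorem \ref{description1-pi1} to the $\Gamma_{n-1}$-normal extension $\delta_i I_n P$, obtaining
\[
\Gal_{\Gamma_{n-1}}(\delta_i I_n P, 0) \cong [\delta_i^- I_n P]_{\Gamma_{n-1}, 0} \cap \Ker(\delta_i I_n P),
\]
and combine this with the factorisation $[-]_{\Gamma_{n-1}, 0} = \iota^{n-1}[-]_{\Gamma, n-1}$ from the first theorem of Section~2.7. Since both intersections and kernels in $\Ext_\Ec^{n-1}(\Cs)$ are computed pointwise and $\iota^{n-1}$-objects are supported only at position $n-1$, applying $\Dom^{n-1}$ (which extracts that position) yields the explicit formula
\[
\Dom^{n-1}\!\left(\Gal_{\Gamma_{n-1}}(\delta_i I_n P, 0)\right) = [\delta_i^- I_n P]_{\Gamma, n-1} \cap \Ker\bigl((a_i)_{I_n P}\bigr),
\]
where $(a_i)_{I_n P}\colon (I_n P)_n \to (I_n P)_{n\setminus\{i\}}$ denotes the $i$-th top structure map of $I_n P$.

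The step I expect to be the main obstacle is showing that the displayed expression does not depend on~$i$. My approach would be to exploit the second half of the higher derived theorem: the commutativity $\rho_i \circ I_n = (I_1)_{n-1} \circ \rho_i$, the compatibility rules $\delta_{j-1}\rho_i = (\rho_i,\rho_i)\delta_j$ for $i<j$ and $\delta_j\rho_i = (\rho_{i-1},\rho_{i-1})\delta_j$ for $j<i$, and the companion identity $[\rho_i(-)]_{\Gamma_1,n-1} = \iota^1[-]_{\Gamma,n}$. Using these, one transports the Galois-group computation from the ``direction $i$'' slicing to the ``direction $n-1$'' slicing of $I_n P$, so that the subobjects $[\delta_i^- I_n P]_{\Gamma, n-1} \cap \Ker((a_i)_{I_n P})$ all coincide, as subobjects of $(I_n P)_n$, with a single intrinsic object equal to $\Gs_n^{\Gamma,\Pc}(P)$. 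The combinatorial bookkeeping of this permutation-invariance argument is the heaviest technical part of the proof.
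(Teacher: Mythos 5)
Your reduction is sound and matches the paper's: you correctly rewrite $\Gs_{1}^{\Gamma_{n-1},\Pc^{n-1}}(\delta_i P)$ as $\Gal_{\Gamma_{n-1}}(\delta_i I_n(P),0)$ via the commutation of $\delta_i$ with $I_n$, invoke Theorem \Ref{description1-pi1} to get $[\delta_i^- I_n(P)]_{\Gamma_{n-1},0}\cap \Ker(\delta_i I_n(P))$, and correctly identify that everything hinges on showing $\Dom^{n-1}$ of this expression is independent of $i$. But that independence is the entire content of the lemma, and your proposal does not prove it: you assert that one can ``transport the computation'' using the $\rho_i$-identities ``so that the subobjects all coincide,'' which restates the claim rather than establishing it. Note in particular that $[\delta_i^- I_n(P)]_{\Gamma,n-1}$ genuinely depends on $i$ a priori (it is the radical of a different $(n-1)$-fold extension for each $i$), so there is real work to do.

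The missing idea is where projectivity of $P$ enters --- and it is conspicuous that projectivity plays no role in your sketch of the key step, even though the lemma is stated only for projective presentations. The paper's argument goes as follows. For $i<j$, since $\delta_i^-(P)$ and $\delta_j^-(P)$ are projective, the arrows $\delta_{j-1}(\delta_i^-(P))$ and $\delta_i(\delta_j^-(P))$ lie in $\Split{\Ec^{n-2}}$, and by Lemma \Ref{carredeltaij} they fit into a commutative square with common domain $\delta_{j-1}^-\delta_i^-(P)=\delta_i^-\delta_j^-(P)$. One may then apply, twice, the identity valid for \emph{split} extensions $f\colon A\to B$,
\[
[f]_{\Gamma,1}=[A]_{\Gamma,0}\cap\Ker(f),
\]
to symmetrise: $[\delta_{j-1}\delta_i^- I_n(P)]_{\Gamma_{n-2},1}\cap\Ker(\delta_i\delta_j^- I_n(P))$ becomes $[\delta_{j-1}^-\delta_i^- I_n(P)]_{\Gamma_{n-2},0}\cap\Ker(\delta_{j-1}\delta_i^- I_n(P))\cap\Ker(\delta_i\delta_j^- I_n(P))$, which is visibly symmetric in the two slicing directions, and hence equals $[\delta_i\delta_j^- I_n(P)]_{\Gamma_{n-2},1}\cap\Ker(\delta_{j-1}\delta_i^- I_n(P))$. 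Applying $\delta_{j-1}$ to the $i$-expression and $\delta_i$ to the $j$-expression, using $[\delta_{j-1}(-)]_{\Gamma_{n-2},1}=\iota^1[-]_{\Gamma,n-1}$, then shows both land on the same subobject, and $\Dom^{n-2}\circ\Dom$ finishes the proof. Without the split-extension identity (or an equivalent substitute), your ``combinatorial bookkeeping'' has no mechanism to make the two radicals $[\delta_i^- I_n(P)]_{\Gamma,n-1}$ and $[\delta_j^- I_n(P)]_{\Gamma,n-1}$ interact, so the gap is genuine. (There is also a mild circularity risk in leaning on Lemma \Ref{invargal3}-style $\rho$-manipulations, since that lemma's proof in turn uses the present one for the case $i=n-1$.)
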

\begin{proof} First, let us note that
\[
\Gs_{1}^{\Gamma_{n-1},\Pc^{n-1}}( \delta_i P) = \Gal_{(\Gamma_{n-1})_0}( (I_1)_{n-1}\delta_i (P),0) = \Gal_{\Gamma_{n-1}}( \delta_i I_n (P),0)
\]
So we are going to prove that
\[
\Dom^{n-1}  \Gal_{\Gamma_{n-1}}(\delta_{i} I_n(P),0) =\Dom^{n-1}  \Gal_{\Gamma_{n-1}}(\delta_{j} I_n(P),0).
\]
for $0\leq i <j<n$. Now, let us remark that, since $\delta^-_i(P)$ and $\delta^-_j(P)$ are both projective, both $\delta_{j-1}(\delta^-_i(P))$ and $\delta_i(\delta^-_j(P))$ are in $\Split {\Ec^{n-2}}$. Consequently, one finds a commutative diagram
\[
 \xymatrix{
. \ar[rrrr]^-{\delta_{j-1}\delta^-_i(P)} \ar[ddd]_-{\delta_{i}\delta^-_j(P)} \ar[rd] &&&&. \ar[ddd]^-{\delta_{i}\delta^+_j(P) = \delta_{i}\delta^+_j I_n(P)} \\
&.\ar[ldd]^-{\delta_i \delta_j^- I_n(P)} \ar[rrru]_-{\delta_{j-1} \delta_i^- I_n (P)}
\\\\
.\ar[rrrr]_-{\delta_{j-1}\delta^+_i(P) =\delta_{j-1}\delta^+_i I_n(P)}&&&&.
}
\]
where  $\delta_i \delta_j^- I_n(P)$ and $\delta_{j-1} \delta_i^- I_n (P)$ are both in $\Split {\Ec^{n-2}}$ and 
\begin{equation*}
\begin{split}
& [\delta_{j-1} \delta_i^-I_n (P)]_{\Gamma_{n-2},1}   \cap \Ker(\delta_i \delta_j^-I_n(P))\\
=& [\delta^-_{j-1} \delta_i^-I_n (P)]_{\Gamma_{n-2},0} \cap \Ker(\delta_{j-1} \delta_i^-I_n (P))  \cap \Ker(\delta_i \delta_j^-I_n(P)) \\
=& [\delta^-_{i} \delta_j^-I_n (P)]_{\Gamma_{n-2},0} \cap \Ker(\delta_{i} \delta_j^-I_n (P))  \cap \Ker(\delta_{j-1} \delta_i^-I_n (P))\\
=& [  \delta_{i} \delta_j^-I_n (P)]_{\Gamma_{n-2},1} \cap \Ker(\delta_{j-1} \delta_i^-I_n (P))
\end{split}
\end{equation*}
Therefore 
\begin{equation*}
\begin{split}
&   \delta_{j-1} \left(  [\delta_i^-I_n (P) ]_{\Gamma_{n-1},0} \cap \Ker( \delta_iI_n (P))   \right) \\
=&  \delta_{j-1} [\delta_i^-I_n (P)]_{\Gamma_{n-1},0} \cap \delta_{j-1} \Ker( \delta_iI_n (P))\\
=&\iota^1 \left(  [\delta_{j-1} \delta_i^-I_n (P)]_{\Gamma_{n-2},1}   \right) \cap \Ker(\delta_{j-1} \delta_iI_n (P)) \\
= & \iota^1 \left([\delta_{j-1} \delta_i^-I_n (P)]_{\Gamma_{n-2},1}   \cap \Ker(\delta_i \delta_j^-I_n(P)) \right)\\
=& \iota^1 \left( [  \delta_{i} \delta_j^-I_n (P)]_{\Gamma_{n-2},1} \cap \Ker(\delta_{j-1} \delta_i^-I_n (P)) \right)\\
=&\iota^1 \left([\delta_{i} \delta_j^-I_n (P)]_{\Gamma_{n-2},1} \right)\cap \Ker(\delta_i \delta_jI_n(P)) \\
=&   \delta_{i} [\delta_j^-I_n (P)]_{\Gamma_{n-1},0} \cap \delta_{i} \Ker( \delta_jI_n (P))     \\
=& \delta_i \left([\delta_j^-I_n (P)]_{\Gamma_{n-1},0}  \cap \Ker(\delta_j I_n (P))\right) 
\end{split}
\end{equation*}
Finally, applying the composite $\Dom^{n-2} \circ \Dom $ to both sides gives the result.
\end{proof}

\begin{lemma}\label{invargal3} For $P$ any $n$-fold $\Pc$-projective presentation
\[
\Gs_n^{\Gamma,\Pc}(P) = \Dom  \left( \Gs_{n-1}^{\Gamma_1, \Pc^1}(\rho_i(P)) \right)
\]
for $0 \leq i <n$.
\end{lemma}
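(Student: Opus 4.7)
The plan is to reduce the claim to Lemma \ref{invargal} by using the isomorphism $\rho_i$ as a bridge between the two Galois-theoretic settings. First, I would apply Lemma \ref{invargal} with $\Gamma_1$ in the role of $\Gamma$ and $n-1$ in place of $n$, giving, for any $0\leq j<n-1$,
\[
\Gs_{n-1}^{\Gamma_1,\Pc^1}(\rho_i(P)) = \Dom^{n-2}\bigl(\Gal_{(\Gamma_1)_{n-2}}(\delta_j(I_1)_{n-1}\rho_i(P),0)\bigr).
\]
The identity $(I_1)_{n-1}\circ\rho_i=\rho_i\circ I_n$ from the theorem on higher derived Galois structures then allows me to replace $(I_1)_{n-1}(\rho_i(P))$ by $\rho_i(I_n(P))$ inside the Galois group.

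Next, I would choose $j$ so as to apply one of the two commutation rules between $\delta$ and $\rho$ recalled in Section 2.6: if $j\geq i$, then $\delta_j\rho_i=(\rho_i,\rho_i)\delta_{j+1}$; if $j<i$, then $\delta_j\rho_i=(\rho_{i-1},\rho_{i-1})\delta_j$. Since $n\geq 2$, a valid $j\in\{0,\dots,n-2\}$ is available regardless of $i$, so after applying the relevant identity the argument of the Galois group takes the uniform form $(\rho_k,\rho_k)(\delta_\ell I_n(P))$ with $k\in\{i-1,i\}$ and $0\leq \ell < n$.

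The key step is to use that $\rho_k$ is an isomorphism of pairs that transports the Galois structure $\Gamma_{n-1}$ to $(\Gamma_1)_{n-2}$, and consequently intertwines the two Galois group constructions:
\[
\Gal_{(\Gamma_1)_{n-2}}\bigl((\rho_k,\rho_k)\alpha,0\bigr) \cong \rho_k\bigl(\Gal_{\Gamma_{n-1}}(\alpha,0)\bigr)
\]
for any $\Gamma_{n-1}$-normal arrow $\alpha$ in $\Ext^{n-1}_\Ec(\Cs)$. This would be justified by expressing each Galois group as the intersection $[\Dom(-)]_{-,0}\cap\Ker(-)$ from Theorem \ref{description1-pi1}, and invoking the radical identity $[\rho_i(-)]_{\Gamma_1,n-1}=\iota^1[-]_{\Gamma,n}$ together with its analogue for $\delta_i$, which together ensure that both derived radicals agree under the isomorphism $\rho_k$ and that all pieces of the intersection transport coherently.

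Applying $\Dom$ and invoking the identity $\Dom^{n-1}(A)\cong\Dom(\Dom^{n-2}(\rho_k(A)))$ from Section 2.6 collapses the resulting expression to
\[
\Dom\bigl(\Gs_{n-1}^{\Gamma_1,\Pc^1}(\rho_i(P))\bigr) = \Dom^{n-1}\bigl(\Gal_{\Gamma_{n-1}}(\delta_\ell I_n(P),0)\bigr),
\]
which equals $\Gs_n^{\Gamma,\Pc}(P)$ by one final application of Lemma \ref{invargal}. The main obstacle is verifying that $\rho_k$ intertwines the two Galois group constructions; once this is established, the remainder is routine bookkeeping with the identities already recorded in Section 2.6.
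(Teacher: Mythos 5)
Your proof is correct and follows essentially the same route as the paper: rewrite $\Gs_{n-1}^{\Gamma_1,\Pc^1}(\rho_i(P))$ via the commutation rules between $\delta_j$ and $\rho_i$ together with $(I_1)_{n-1}\rho_i=\rho_i I_n$, observe that the Galois-group pullback is transported by $\rho$, and collapse the domains using $\Dom^{n-1}\cong\Dom\circ\Dom^{n-2}\circ\rho_k$. The only (harmless) difference is organizational: the paper fixes $j=n-2$ and defers the case $i=n-1$ to Lemma~\ref{invargal}, whereas you invoke Lemma~\ref{invargal} at level $n-1$ up front to choose $j$ freely and treat all $i$ uniformly.
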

\begin{proof} Let $0 \leq i <n-1$. First let us remark that 
\begin{equation*}
\begin{split}
 \Gs_{n-1}^{\Gamma_1, \Pc^1}(\rho_i(P)) &= \Dom^{n-2} \left( \Gal_{(\Gamma_1)_{n-2}} (\delta_{n-2} (I_1)_{n-1} \rho_i P, 0) \right)\\
 &= \Dom^{n-2} \left(  \Gal_{(\Gamma_1)_{n-2}} (  (\rho_i, \rho_i)  \delta_{n-1} I_n (P), 0)   \right)
 \end{split}
\end{equation*}
Now $(\rho_i, \rho_i)  \delta_{n-1} I_n (P)$ is the $\Ec^n$-extension 
\[
\rho_i( \delta_{n-1} I_n (P)) \colon \rho_i( \delta_{n-1}^- I_n (P)) \to \rho_i( \delta_{n-1}^+ I_n (P))
\]
and $\Gal_{(\Gamma_1)_{n-2}} (  (\rho_i, \rho_i)  \delta_{n-1} I_n (P), 0)$  is given by the pullback
\[
\xymatrix{
\Gal_{(\Gamma_1)_{n-2}} (  (\rho_i, \rho_i)  \delta_{n-1} I_n (P), 0) \ar[r] \ar[d]&[\rho_i( \delta_{n-1}^- I_n (P)) ]_{(\Gamma_1)_{n-2},0}  \ar[d]\\
\Ker (\rho_i( \delta_{n-1} I_n (P))) \ar[r]&\rho_i( \delta_{n-1}^- I_n (P)) 
}
\]
or equivalently by the pullback
\[
\xymatrix{
\rho_i (\Gal_{\Gamma_{n-1}} (\delta_{n-1} I_n (P), 0)) \ar[r] \ar[d]&   \rho_i  (  [\delta_{n-1}^- I_n (P)) ]_{\Gamma_{n-1},0} ) \ar[d]\\
\rho_i (\Ker( \delta_{n-1} I_n (P))) \ar[r]&\rho_i( \delta_{n-1}^- I_n (P)) 
}
\]
The case $i=n-1$ is proved in a similar way with the help of the previous lemma.
\end{proof}

\begin{definition} Let $n\geq 1$. The \emph{$n^{\text{th}}$-fundamental group functor} $\pi_n^{\Gamma,\Pc}(-)$, with respect to the Galois structure $\Gamma$ and the class $\Pc$, is the pointwise right Kan extension of $\Gs_{n}^{\Gamma,\Pc}=\Dom^{n-1}\left( \Gal_{\Gamma_{n-1}}(I_n(-),0)\right)$ along the functor $\Cod^n$. 
\[
\xymatrix{
&\Ext^n_\Pc(\Cs_\Pc) \ar[dl]_-{\Cod^n} \ar[dr]^-{\Gs_{n}^{\Gamma,\Pc}}& \\
\Cs_\Pc \ar@{.>}[rr]_-{\pi_n^{\Gamma,\Pc}(-)} \ar@{}[urr]|(0.4){\Rightarrow}&& \Gp(\Xs)
}
\]
\end{definition}

Indeed, we need to prove that the fundamental group functors exists.

\begin{lemma}\label{cofinal} Let $n\geq 0$, $C$ be in $\Cs_\Pc$ and let us consider the comma square  
\[
\xymatrix{
C\downarrow \Cod^n \ar[r]^-{Q_C}   \ar[d]_-{P_C}  & \ar[d]^-{\Cod^n}  \Ext^n_\Pc(\Cs_\Pc) \\
   {\bf 1}  \ar[r]_-C  \ar@{}[ru]|-{\Rightarrow} & \Cs_\Pc
}
\]
The full subcategory $\Ps_C^n$ of $C\downarrow \Cod^n$, whose objects are of type
\[
(1_C \colon C \to C= \Cod^n(P),P)
\]
 with $P$ in $\Pres^n_\Pc(C)$, is initial. In fact, for every object
\[
\Qc =(h \colon C \to \Cod^n (Q),Q)
\]
of $C\downarrow \Cod^n$, there exists a weak terminal object in $ \Ps_C^n\downarrow \Qc$.
\end{lemma}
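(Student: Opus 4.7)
I plan to prove this by induction on $n \geq 0$, using the isomorphism $\delta_{n-1}\colon (\Ext^n_\Pc(\Cs_\Pc),\Pc^n) \to (\Ext_{\Pc^{n-1}}(\Ext^{n-1}_\Pc(\Cs_\Pc)),(\Pc^{n-1})^1)$ to peel off one dimension at a time, together with the characterisation (recalled just before the statement) that an object of $\Ext^{n-1}_\Pc(\Cs_\Pc)$ is $\Pc^{n-1}$-projective precisely when all its components are $\Pc$-projective. The base case $n = 0$ is immediate: $\Ext^0_\Pc(\Cs_\Pc) = \Cs_\Pc$, $\Cod^0 = 1$, and $\Ps_C^0 = \{(1_C, C)\}$, so $\Ps_C^0 \downarrow \Qc$ reduces to the singleton whose unique object is $h\colon (1_C, C) \to (h, A)$.

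For the inductive step, let $\Qc = (h\colon C \to Q_0, Q)$ with $Q$ an $n$-fold $\Pc$-extension. View $Q$ through $\delta_{n-1}$ as a morphism $\delta_{n-1}(Q)\colon \delta_{n-1}^-(Q) \to \delta_{n-1}^+(Q)$ in $\Pc^{n-1}$ between $(n-1)$-fold $\Pc$-extensions, and observe that $\Cod^{n-1}(\delta_{n-1}^+(Q)) = Q_0$. Apply the inductive hypothesis to $(h, \delta_{n-1}^+(Q))$ in $C \downarrow \Cod^{n-1}$ to obtain a weak terminal object $(\bar P, \bar g)$ with $\bar P \in \Pres_\Pc^{n-1}(C)$ and $\bar g\colon \bar P \to \delta_{n-1}^+(Q)$ satisfying $\Cod^{n-1}(\bar g) = h$. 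Form the pullback $R$ of $\delta_{n-1}(Q)$ along $\bar g$ inside $\Ext^{n-1}_\Pc(\Cs_\Pc)$; its projection to $\bar P$ lies in $\Pc^{n-1}$ by \hli{E2}. Since $\Ext^{n-1}_\Pc(\Cs_\Pc)$ has enough $\Pc^{n-1}$-projectives, choose a $\Pc^{n-1}$-projective $\tilde P$ with a $\Pc^{n-1}$-morphism $\tilde P \to R$. Composing gives $e\colon \tilde P \to \bar P$ in $\Pc^{n-1}$ (by \hli{E3}) together with $\tilde g\colon \tilde P \to \delta_{n-1}^-(Q)$ satisfying $\delta_{n-1}(Q) \circ \tilde g = \bar g \circ e$. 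Transporting $e$ through $\delta_{n-1}^{-1}$ yields an $n$-fold $\Pc$-extension $P$ with $P_\emptyset = C$; since $\bar P \in \Pres_\Pc^{n-1}(C)$ and every component of $\tilde P$ is $\Pc$-projective, one has $P_S$ projective for all $S \neq \emptyset$, so $P \in \Pres_\Pc^n(C)$. The compatible pair $(\bar g, \tilde g)$ transports through $\delta_{n-1}^{-1}$ to a morphism $g\colon P \to Q$ in $\Ext^n_\Pc$ with $\Cod^n(g) = h$, providing an object $(P,g) \in \Ps_C^n \downarrow \Qc$.

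To verify that $(P,g)$ is weak terminal, take any $(P', g') \in \Ps_C^n \downarrow \Qc$. Decomposing $P'$ via $\delta_{n-1}$ yields an arrow $e'\colon \tilde P' \to \bar P'$ with $\bar P' \in \Pres_\Pc^{n-1}(C)$ and $\tilde P'$ componentwise $\Pc$-projective, hence $\Pc^{n-1}$-projective, and $g'$ decomposes into a compatible pair $(\bar g', \tilde g')$. The inductive weak terminality of $(\bar P, \bar g)$ gives $\bar\alpha\colon \bar P' \to \bar P$ with $\Cod^{n-1}(\bar\alpha) = 1_C$ and $\bar g \circ \bar\alpha = \bar g'$; then $(\bar\alpha \circ e', \tilde g')$ is compatible with the pullback square defining $R$, so it factors through $R$, and the $\Pc^{n-1}$-projectivity of $\tilde P'$ against the $\Pc^{n-1}$-morphism $\tilde P \to R$ lifts this factorisation to $\tilde\alpha\colon \tilde P' \to \tilde P$. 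Combining $\bar\alpha$ with $\tilde\alpha$ through $\delta_{n-1}^{-1}$ yields the desired $\alpha\colon (P', g') \to (P, g)$. The main subtlety will be in the two ``$\delta_{n-1}^{-1}$-transport'' steps, where one must check that the assembled data genuinely defines an $n$-fold $\Pc$-extension and a morphism therein; this should be formal, following from the fact that $\delta_{n-1}$ is an isomorphism of pairs and from the closure axioms \hli{E1}, \hli{E2} and \hli{E3} for $(\Ext^{n-1}_\Pc(\Cs_\Pc), \Pc^{n-1})$.
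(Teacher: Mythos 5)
Your proposal is correct and follows essentially the same route as the paper: induction on $n$ via $\delta_{n-1}$, applying the inductive hypothesis to the codomain part $\delta_{n-1}^+(Q)$, pulling back $\delta_{n-1}(Q)$ along the resulting weak terminal map, taking a $\Pc^{n-1}$-projective presentation of that pullback, and reassembling through $\delta_{n-1}^{-1}$. The only difference is that you spell out the weak-terminality verification (the lift through the pullback using projectivity of $\delta_{n-1}^-(P')$), which the paper leaves as ``easy to check.''
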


\begin{proof} Let $n\geq 0$ ang $\Qc =(h \colon C \to \Cod^n (Q),Q)$ be in $C\downarrow \Cod^n$. Recall that an object in $\Ps_C^n\downarrow \Qc$ ($n\geq 0$) is a map
\[
\xymatrix{
(1_C \colon C \to C= \Cod^n(P),P) \ar[r]^-f   &  (h \colon C \to \Cod^n (Q),Q)
}
\]
in $C \downarrow \Cod^n$ that is a map $f\colon P\to Q$ in  $\Ext_\Pc^n(\Cs_\Pc)$  such that $  \Cod^n  (f) = h $ and $P \in \Pres^n_\Pc(\Cs_\Pc)$. We shall denote by 
\[
\xymatrix{ (1_C \colon C \to C= \Cod^n(Q^\star),Q^\star) \ar[r]^-{\Qc^\star} & (h \colon C \to \Cod^n (Q),Q)
}
\]
any weak terminal object in $\Ps_C^n\downarrow \Qc$ (if any exists).

With the convention that $\Cod^0=1_{\Cs_\Pc}$, the result is true when $n=0$. Let $n>0$ and let us suppose that for $0\leq k < n$ the result is true. Let us write
\[
\xymatrix{
\delta_{n-1} (Q) \ar@{}[r]|-= & Q_1 \ar[r]^-q &Q_0
}
\]
One consider a weak terminal object 
\[
\Qc_0^\star =(1_C \colon C \to C=\Cod^{n-1}(Q_0^\star),Q_0^\star)
\]
in $\Ps^{n-1}_C \downarrow \Qc_0$ where
\[
\Qc_0=(h\colon C \to \Cod^{n-1}(Q_0),Q_0)
\]
and form a commutative diagram
\[
\xymatrix{
Q_1^\star  \ar@/^/[rrd]^-{\Qc_1^\star}    \ar@/_/[rdd]_-{q^\star}    \ar@{.>}[rd]|-{u}  & \\
&Q_0^\star \times_{Q_0} Q_1   \ar[d]_-{}    \ar[r]^-{}  & Q_1  \ar[d]^-q  \\
&Q_0^\star   \ar[r]_-{\Qc_0^\star}  & Q_0
}
\]
where $u\colon Q_1^\star   \to Q_0^\star \times_{Q_0} Q_1$ is a $1$-fold $\Pc^{n-1}$-projective presentation of $Q_0^\star \times_{Q_0} Q_1$.  Of course $Q^\star$, determined by 
\[
\xymatrix{
\delta_{n-1} (Q^\star) \ar@{}[r]|-= & Q_1^\star \ar[r]^-{q^\star} &Q_0^\star,
}
\]
is in $\Pres^{n}_\Pc(C)$. Finally, it is easy to check that $\Qc^\star \colon Q^\star \to Q $, given by 
\[
\delta_{n-1} (\Qc^\star )=(\Qc_1^\star,\Qc_0^\star) \colon q^\star \to q,
\]
is an object
\[
\xymatrix{
(1_C \colon C\to C=\Cod^{n} (Q^\star),Q^\star) \ar[r]^-{\Qc^\star} & (h \colon C \to \Cod^{n} (Q),Q)
}
\]
which is weakly terminal in $\Ps_C^n \downarrow \Qc$.
\end{proof}
In fact, one has the following situation 
\begin{equation}\label{globalsituation}
\vcenter{
\xymatrix{
\Ps^n_C \ar[r]^-{\tilde Q_C} \ar@{->}[d]_-i \ar@{}[rd]|-= & \Pres^n_\Pc(C) \ar@{->}[d]\\
C\downarrow \Cod^n \ar[r]^-{Q_C}   \ar[d]_-{P_C}  & \ar[d]^-{\Cod^n}  \Ext^n_\Pc(\Cs_\Pc)\ar[r]^-{\Gs^{\Gamma,\Pc}_n}   &  \Gp(\Xs)\\
   {\bf 1}  \ar[r]_-C  \ar@{}[ru]|-{\Rightarrow} & \Cs_\Pc
}
}
\end{equation}
where $\tilde Q_C$ is an isomorphism and the inclusion $i$ initial. We are going to  show that $\Gs^{\Gamma,\Pc}_n$ restricted to $\Pres_\Pc^n(C)$ has a limit. For this, we first prove the following

\begin{lemma}\label{Baer-Invariant2} Let $1\leq n$, $C \in \Cs_\Pc$  and $f,g \colon P\to Q$ be two morphisms in $\Pres_\Pc^n(\Cs)$ such that $\Cod^{n}(f)=\Cod^n(g)$. Then 
\[
 \Gs_{n}^{\Gamma,\Pc} (f)= \Gs_{n}^{\Gamma,\Pc} (g)\colon\Gs_{n}^{\Gamma,\Pc}(P)\to \Gs_{n}^{\Gamma,\Pc}(Q)
\]

\end{lemma}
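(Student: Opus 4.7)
The plan is to induct on $n$, using Corollary \ref{corobaer} as the base case and combining Lemmas \ref{invargal2} and \ref{invargal3} in the inductive step. For $n = 1$, one has $\Gs_1^{\Gamma,\Pc}(-) = \Gal_\Gamma(I_1(-), 0)$, and the assertion is exactly Corollary \ref{corobaer}.

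For the inductive step, assume the result for $n-1$ in every Galois structure of type \ref{GStypeP}; by Theorem \ref{firstderived} it then also applies to the derived structure $\Gamma_1$ with class $\Pc^1$ on the category $\Ext_\Pc(\Cs_\Pc)$. Given $f, g \colon P \to Q$ in $\Pres_\Pc^n(\Cs_\Pc)$ with $f_\emptyset = g_\emptyset$, I would construct an intermediate morphism $f' \colon P \to Q$ that coincides with $f$ on every $S \subseteq n$ not containing $0$ and with $g$ at the positions $\emptyset$ and $\{0\}$. Concretely, put $f'_S := f_S$ whenever $0 \notin S$ and $f'_{\{0\}} := g_{\{0\}}$; this choice is compatible with $f'_\emptyset = g_\emptyset$ since $g$ is a morphism of $n$-fold extensions. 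For $S$ with $0 \in S$ and $|S| \geq 2$, define $f'_S$ by induction on $|S|$: the already-chosen $f'_T$ for $T \subsetneq S$ determine a compatible cone $P_S \to \lim_{T \subsetneq S} Q_T$, which lifts through the morphism $Q_S \to \lim_{T \subsetneq S} Q_T$ (in $\Pc$, since $Q$ is an $n$-fold $\Pc$-extension) using $\Pc$-projectivity of $P_S$.

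Since $S^0 = \{k+1 : k \in S\}$ ranges over exactly the subsets of $n$ not containing $0$ as $S$ runs through $\mathscr{P}(n-1)$, the construction yields $\delta_0^+(f) = \delta_0^+(f')$; Lemma \ref{invargal2} with $i = 0$ then gives $\Gs_n^{\Gamma,\Pc}(f) = \Gs_n^{\Gamma,\Pc}(f')$. At the same time, $f'$ and $g$ agree at $\emptyset$ and $\{0\}$, so $\Cod^{n-1}(\rho_0(f')) = (f'_{\{0\}}, f'_\emptyset) = (g_{\{0\}}, g_\emptyset) = \Cod^{n-1}(\rho_0(g))$. The inductive hypothesis, applied in the derived setting to the morphisms $\rho_0(f'), \rho_0(g) \in \Pres_{\Pc^1}^{n-1}(\Ext_\Pc(\Cs_\Pc))$, yields $\Gs_{n-1}^{\Gamma_1,\Pc^1}(\rho_0(f')) = \Gs_{n-1}^{\Gamma_1,\Pc^1}(\rho_0(g))$, and the naturality of the identification in Lemma \ref{invargal3} upgrades this to $\Gs_n^{\Gamma,\Pc}(f') = \Gs_n^{\Gamma,\Pc}(g)$. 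Chaining the two equalities finishes the induction.

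The main obstacle will be the careful bookkeeping in the construction of $f'$: one must check that the compatible cones used at each step really assemble into a morphism of $n$-fold $\Pc$-extensions, mixing the prescribed values $f'_T = f_T$ (for $0 \notin T$) with the inductively chosen lifts (for $0 \in T$, $|T| < |S|$). The structural idea, however, is clean: projectivity allows a reduction of \emph{invariance under a change at position $\emptyset$ alone} to \emph{invariance under a change at positions $\emptyset$ and $\{0\}$} in the derived Galois structure $\Gamma_1$, where the inductive hypothesis applies.
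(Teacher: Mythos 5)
Your proof is correct and shares the paper's skeleton exactly: induction on $n$, base case Corollary \ref{corobaer}, and an inductive step that combines Lemma \ref{invargal2} (invariance under $\delta_0^+$) with Lemma \ref{invargal3} and the inductive hypothesis applied to the derived structure $\Gamma_1$. Where you genuinely diverge is in the construction at the heart of the inductive step. The paper does not interpolate inside $\Hom(P,Q)$: it builds auxiliary presentations $p\colon P'\to P$ and $q\colon Q'\to Q$ with $\delta_0^+(p)=1=\delta_0^+(q)$ (so that $\Gs_n^{\Gamma,\Pc}(p)$ and $\Gs_n^{\Gamma,\Pc}(q)$ are isomorphisms) and lifts $f$ and $g$ to $f',g'\colon P'\to Q'$ agreeing at the positions $\emptyset$ and $\{0\}$, the new entry at $\{0\}$ being a projective cover of $P_\emptyset$ on which a common lift of $f_\emptyset=g_\emptyset$ is chosen; the conclusion is then reached by cancelling the isomorphism $\Gs_n^{\Gamma,\Pc}(p)$ in a zig-zag. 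You instead keep $P$ and $Q$ fixed and manufacture a single interpolating morphism $f'\colon P\to Q$ with $\delta_0^+(f')=\delta_0^+(f)$ and $\Cod^{n-1}\rho_0(f')=\Cod^{n-1}\rho_0(g)$, by decreeing $f'_{\{0\}}:=g_{\{0\}}$ and re-lifting at the subsets containing $0$ via $\Pc$-projectivity of $P_S$ and the fact that $Q_S\to \lim_{T\subsetneq S}Q_T$ lies in $\Pc$. The compatibility checks you flag are the right ones and they do go through: the cone conditions only ever pit an $f$-component against a $g$-component over $\emptyset$, where $f_\emptyset=g_\emptyset$, so the mixed family is natural and the lifts exist. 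Your route is therefore legitimate and somewhat more economical, at the price of being asymmetric in $f$ and $g$. Two small points to tighten: the induction hypothesis should be quantified over structures of type \Ref{GStypePP} (the variant equipped with the projective class $\Pc$), not \Ref{GStypeP}, together with the fact that $\Ext_\Pc(\Cs_\Pc)$ has enough $\Pc^1$-projectives; and, exactly as the paper does implicitly, you apply Lemma \ref{invargal3} to morphisms, which requires noting that its identification is natural (it is, being induced by the isomorphism $\rho_0$ applied to the defining pullbacks).
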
\label{Baer}
\begin{proof} The case $n=1$ follows from Corollary \ref{corobaer}. Now let $n>1$ and let us suppose that  the result holds  for $0\leq k <n$ and any Galois structure of type \ref{GStypePP}. It is possible to construct a diagram
\begin{equation}\label{thediagram}
\xymatrix{
P' \ar[r]^-p \ar@<0.5ex>[d]^-{f'}  \ar@<-0.5ex>[d]_-{g'} & P \ar@<0.5ex>[d]^-f \ar@<-0.5ex>[d]_-g \\
Q' \ar[r]_q  & Q 
}
\end{equation}
in $\Pres_\Pc(\Cs_\Pc)$ such that 
\[
\begin{aligned}
\Cod^{n-1} \rho_0(f') &= \Cod^{n-1} \rho_0(g') ;& \delta_{0}^+(p)&=1 ; & \delta_{0}^+(q)&=1;
\end{aligned}
\]
\[
\begin{aligned}
\delta_{0}^+ (f\circ p) &= \delta_{0}^+ (q \circ f'); & \delta_{0}^+ (g\circ p)& = \delta_{0}^+ (q \circ g').
\end{aligned}
\]
Consequently  $ \Gs_{n}^{\Gamma,\Pc} (p)$ and $ \Gs_{n}^{\Gamma,\Pc} (q)$ are isomorphisms and one can  conclude using Lemma \ref{invargal2} and Lemma \ref{invargal3}. Let us give some details about the  construction of the diagram above (note that the proof is a modified version of \cite[Theorem 2.3.10]{Tomasthesis}). For $i=0 ,\hdots,n-1$, we define $P^{(i)}=(P_{S^0})_{S\subseteq i}$, $Q^{(i)}=(Q_{S^{0}})_{S\subseteq i}$, $f^{(i)}=(f_{S^0})_{S\subseteq i}$ and $g^{(i)} =(g_{S^0})_{S\subseteq i}$ in  $\Ext^i_\Pc(\Cs_\Pc)$. One easily check that $\delta_i^+(P^{(i+1)}) = P^{(i)}$ and $\delta_i^+(Q^{(i+1)}) = Q^{(i)}$ and so on. Then we construct inductively some commutative diagrams $D_f^{(i)}$ and $D_g^{(i)}$:
\[
\xymatrix{
P'^{(i)} \ar[r]^-{p^{(i)}}  \ar[d]_-{f'^{(i)}}     & P^{(i)} \ar[d]^-{f^{(i)}}\\
Q'^{(i)} \ar[r]_-{q^{(i)}}   &    Q^{(i)}
}
\xymatrix{
P'^{(i)} \ar[r]^-{p^{(i)}}  \ar[d]_-{g'^{(i)}}     & P^{(i)} \ar[d]^-{g^{(i)}}\\
Q'^{(i)} \ar[r]_-{q^{(i)}}   &    Q^{(i)}
}
\]
for $i=0,\hdots, n-1$. First, we set $D_f^{(0)} = D_g^{(0)}$
\[
\xymatrix{
P'^{(0)} \ar[r]^-{p^{(0)}}  \ar@{.>}[d]_-{f'^{(0)}=g'^{(0)}}     & P_0 \ar[d]^-{f_0=g_0}\\
Q'^{(0)} \ar[r]_-{q^{(0)}}   & Q_0
}
\]
where $p^{(0)}$ ($q^{(0)}$) is any $1$-fold $\Ec$-projective presentations of $P_0$ ($Q_0$), and $f'^{(0)}=g'^{(0)}$ is any lifting of $f_0 p^{(0)}=g_0 p^{(0)}$ along $q^{(0)}$. Then having constructed $D_f^{(i)}$, one constructs $D_f^{(i+1)}$ using the diagram:
\[
\xymatrix@=15pt{
A \ar[rrr]^-{a} \ar@/_5ex/[ddrrr]_-{\delta_i P'^{(i+1)}} \ar@{.>}[rd]^-{c} & &&. \ar[dd] \ar[dr] \ar[rr] \ar@{}[dr]|<<{\pullback} &&  . \ar[dr] \ar[dd]^(0.7){\delta_i P^{(i+1)} }\\
&B \ar[rrr]^-{b}  \ar@/_5ex/[ddrrr]_-{\delta_i Q'^{(i+1)}} &&&.  \ar@{}[dr]|<<{\pullback} \ar[dd]  \ar[rr] &&.\ar[dd]^-{\delta_i Q^{(i+1)}} \\
&&& P'^{(i)} \ar[dr]  \ar[rr]&&P^{(i)} \ar[dr] \\
&&&& Q'^{(i)} \ar[rr]& & Q^{(i)}
}
\]
where  $a$ and $b$ are $1$-fold $\Ec^i$-projective presentations and $c$ is a suitable lifting. One constructs $D_g^{(i)}$ similarly. Finally, one obtains the diagram \ref{thediagram} via
\[
\xymatrix{
P'^{(n-1)} \ar@<0.5ex>[d]  \ar@<-0.5ex>[d] \ar[r]_-{\delta_0(P')} \ar@{.>}@/^2ex/[rr]  &  P^{(n-1)}  \ar@<0.5ex>[d] \ar@<-0.5ex>[d]& \delta^-_0 (P) \ar[l]^-{\delta_0(P)} \ar@<0.5ex>[d] \ar@<-0.5ex>[d]\\
Q'^{(n-1)} \ar[r]^-{\delta_0(Q')}  \ar@{.>}@/_2ex/[rr]&  Q^{(n-1)} & \delta^-_0(P) \ar[l]_-{\delta_0(Q) }
}
\]
where the dotted arrows (which determine $p$ and $q$) are obtained as liftings once again. Note that $\Cod^{n-1} \rho_0(f')$ and $\Cod^{n-1} \rho_0(g')$ are given by $D_f^{(0)}=D_g^{(0)}$.
\end{proof}
Looking at diagram \ref{globalsituation}, one sees that 
\[
\xymatrix{
C\downarrow \Cod^n \ar[r]^-{Q_C}&\Ext_\Pc^n(\Cs_\Pc) \ar[rr]^-{\Gs_{n}^{\Gamma,\Pc}}&& \Gp(\Xs)
}
\]
has a limit if and only if 
\begin{equation}\label{diagram}
\xymatrix{
\Pres_\Pc^n(C) \ar@{->}[r]&\Ext_\Pc^n(\Cs_\Pc) \ar[rr]^-{\Gs_{n}^{\Gamma,\Pc}}&& \Gp(\Xs)
}
\end{equation}
has a limit. Then, using Lemma \ref{Baer-Invariant2},  we find that this last limit exists and we can write
\[
\lim \Gs_{n}^{\Gamma,\Pc} \circ Q_C \cong  \Gs_{n}^{\Gamma,\Pc}(P) 
\]
for $P$ any $n$-projective $\Pc$-presentation of $C$. Indeed the image of the diagram \Ref{diagram} can be viewed as a subcategory of $\Gp(\Xs)$ which is an equivalence relation. We have proved:

\begin{theorem} For $n\geq 1$, the $n^{\text{th}}$-fundamental group functor (with respect to the Galois structure $\Gamma$ and the class $\Pc$) exists and, for any object $C$ in $\Cs_\Pc$ and $P$ any $n$-fold $\Pc$-projective presentation of $C$, one has
\[
\pi_n^{\Gamma,\Pc}(C) \cong \Gs_{n}^{\Gamma,\Pc}(P).
\]
\qed
\end{theorem}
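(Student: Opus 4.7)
The plan is to invoke the pointwise formula for right Kan extensions and then simplify the indexing diagram in two stages. By definition, $\pi_n^{\Gamma,\Pc}(C)$ exists precisely when the composite
\[
\Gs_{n}^{\Gamma,\Pc}\circ Q_C\colon (C\downarrow \Cod^n) \to \Gp(\Xs)
\]
admits a limit, and when it does, $\pi_n^{\Gamma,\Pc}(C)$ is canonically isomorphic to that limit. Hence the whole task is to construct this limit and identify it with $\Gs_n^{\Gamma,\Pc}(P)$.

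First, I would cut down the indexing category. By Lemma~\ref{cofinal}, the full subcategory $\Ps_C^n \simeq \Pres_\Pc^n(C)$ is initial in $C\downarrow \Cod^n$, because every object of $C \downarrow \Cod^n$ admits a morphism from an object of $\Ps_C^n$ (one takes a projective resolution of the comma-square data, exactly as spelled out in the inductive construction of the lemma). Consequently the limit over $C\downarrow \Cod^n$ exists if and only if the limit of the restriction
\[
\Pres_\Pc^n(C) \hookrightarrow \Ext_\Pc^n(\Cs_\Pc) \xrightarrow{\Gs_n^{\Gamma,\Pc}} \Gp(\Xs)
\]
exists, and the two limits coincide.

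Second, I would analyse this restricted diagram. Every morphism $f\colon P\to Q$ in $\Pres_\Pc^n(C)$ satisfies $\Cod^n(f) = 1_C$, so by the Baer-invariance Lemma~\ref{Baer-Invariant2} any two parallel morphisms in $\Pres_\Pc^n(C)$ are sent by $\Gs_n^{\Gamma,\Pc}$ to the same morphism in $\Gp(\Xs)$. Since between any two objects of $\Pres_\Pc^n(C)$ there is at least one morphism (as recalled just before the theorem), applying $\Gs_n^{\Gamma,\Pc}$ to a pair of morphisms $P \rightleftarrows Q$ and using that their composites must equal $\Gs_n^{\Gamma,\Pc}(1_P)$ and $\Gs_n^{\Gamma,\Pc}(1_Q)$ (by the same invariance argument), one sees that $\Gs_n^{\Gamma,\Pc}(f)$ is an isomorphism with a canonical inverse. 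The image of the diagram is therefore a connected subcategory of $\Gp(\Xs)$ in which all arrows are canonical isomorphisms; equivalently, it is an equivalence relation, whose limit is any of its objects.

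The third and concluding step is then immediate: choosing any $P \in \Pres_\Pc^n(C)$ yields
\[
\pi_n^{\Gamma,\Pc}(C) \;\cong\; \lim\bigl(\Gs_n^{\Gamma,\Pc}\circ Q_C\bigr) \;\cong\; \lim\bigl(\Gs_n^{\Gamma,\Pc}|_{\Pres_\Pc^n(C)}\bigr) \;\cong\; \Gs_n^{\Gamma,\Pc}(P).
\]
The main obstacle is verifying the cofinality statement of Lemma~\ref{cofinal} and the Baer-invariance of Lemma~\ref{Baer-Invariant2}, both of which have already been established; granted those, the rest of the argument is a formal manipulation of limits. No additional exactness hypotheses on $\Gp(\Xs)$ are needed, since one only ever forms limits of diagrams of isomorphisms.
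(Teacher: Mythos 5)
Your proposal is correct and follows essentially the same route as the paper: restrict along the initial subcategory $\Ps_C^n\cong\Pres_\Pc^n(C)$ of $C\downarrow\Cod^n$ (Lemma \ref{cofinal}), then use Baer invariance (Lemma \ref{Baer-Invariant2}) together with the existence of at least one morphism between any two objects of $\Pres_\Pc^n(C)$ to see that the restricted diagram lands in an equivalence relation of isomorphisms, whose limit is any of its objects. The only gloss to tighten is that initiality requires the comma categories $\Ps_C^n\downarrow\Qc$ to be connected, not merely nonempty --- which is exactly what the weak terminal object constructed in Lemma \ref{cofinal} provides.
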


Note that, for $n = 1$, this result shows that our definition of the first fundamental group perfectly agrees with the definition given in \cite{Janelidze:Hopf}: $\pi_1(C)$ is the Galois group of some \emph{weakly universal normal extension} of $C$ (a weak initial object in $\NExt_\Gamma(C)$). It suffices to note that, for $P$ a projective presentation of $C$,  $I_1(P)$ is a weakly universal normal extension of $C$. Let us also mention that the existence of $\pi_1$ is already proved in \cite{EGoeVdL2} (In a different context and without emphasis on the pointwise character of the Kan extension).

\subsection{The fundamental group functors as satellites}

The fundamental groups functors defined as above coincide with various other homology defined in more restricted contexts. Indeed it is easy to show (using a slight modification of the arguments given in the section 6 of \cite{EGoeVdL2}) that
\[
\pi_n^{\Gamma,\Pc} = \mathrm{Ran}_{\Cod^n}(\Ker^n \circ I_n),
\]
that is $\pi_n^{\Gamma,\Pc}$ is the pointwise right satellite of $I_n$ with respect to $\Cod^n$ and $\Ker^n$:
\[
\xymatrix{
\Ext_\Pc^n(\Cs_\Pc) \ar[r]^-{I_n} \ar[d]_-{\Cod^n}& \NExt_\Gamma^n(\Cs) \ar[d]^-{\Ker^n}\\
\Cs_\Pc \ar[r]_-{\pi_n^{\Gamma,\Pc}} \ar@{}[ru]|-{\Rightarrow}& \Xs,
}
\]
so that $\pi_n^{\Gamma,\Pc}(-) \cong H_{n+1}(-,I)$ where $H_{n+1}(-,I)$ is defined as in \cite{GVdL2}.
For the sake of completeness, we give here the explanation. Let $C$ be in $\Cs_\Pc$. It is sufficient to show that a natural transformation of the form
\[
l\colon \Delta_L \Rightarrow \Ker^n \circ I_n \circ Q_C,
\]
for some $L$ in $\Xs$, factors (uniquely) through $ \overline{\gamma} Q_C\colon \Gs_n^{\Gamma,\Pc}\circ  Q_C \Rightarrow \Ker^n \circ I_n \circ Q_C $. Let $(f,P)=(f\colon C \to \Cod^n(P),P)$ be in $C\downarrow \Cod^n$ and let us consider the following morphisms in $C\downarrow \Cod^n$ determined by :
\[
\xymatrix{
\delta_{n-1}^-(P) \ar[r]^-{\eta_{\delta_{n-1}^-(P)}^{n-1}} \ar[d]_-{\delta_{n-1}(P)} & I_{n-1}(\delta_{n-1}^-(P)) \ar[d]  & 0\ar[d] \ar[l]  \\
\delta_{n-1}^+(P) \ar[r] \ar@{.}[d] & 0\ar@{.}[d]  & 0\ar@{.}[d]\ar[l]  \\
\Cod^n(P) \ar[r] & 0& 0\ar[l] \\
  &C \ar[ul]^-f \ar[u] \ar[ur] &
 }
\]
Since $I_{n-1}\delta_{n-1} I_n(P) = I_{n-1}I_n \delta_{n-1} (P)=I_{n-1}\delta_{n-1} (P)$, the naturality of $l$ gives us a commutative diagram:
\[
\xymatrix{
L \ar@{=}[rrrrr] \ar[d]^-{l_{(f,P)}}&&&&& L\ar[d] & L \ar@{=}[l] \ar[d] \\
\Ker^n(I_n(P)) \ar[rrrrr]_-{\alpha=\Ker^{n-1}(\eta^{n-1}_{\delta_{n-1}^- I_n (P)} \circ \ker(\delta_{n-1} I_n(P)))} &&&&& \Ker^{n-1}(I_{n-1} \delta_{n-1}^- I_n(P)) &0 \ar[l]
}
\]
and  consequently $l_{(f,P)}$ factors through $(\overline{\gamma}Q_C)_{(f,P)} = \overline{\gamma}_P$, the kernel of the morphism $\alpha$.
\subsection{Hopf formulae}
\paragraph{First formulae} Let $\Gamma$ be a structure of the \Ref{GStypePP}.
\begin{theorem}\label{Hopf1} Let $1 \leq n$  and $P$ be a $n$-fold $\Pc$-projective presentation. Then
\[
\Gs_{n}^{\Gamma,\Pc}(P)\cong\dfrac{ [\Dom^n(P)]_{\Gamma,0} \cap \Ker^n(P)}{[P]_{\Gamma,n}}
\]
\end{theorem}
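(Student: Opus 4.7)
The plan is to prove the formula by induction on $n\geq 1$, simultaneously for every Galois structure of type \Ref{GStypePP}.

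For the base case $n=1$, with $p\colon F\to C$ a $\Pc$-projective presentation, the definition gives $\Gs_1^{\Gamma,\Pc}(p)=\Gal_\Gamma(I_1(p),0)$, and Theorem \ref{description1-pi1} identifies this with $[\Dom(I_1(p))]_{\Gamma,0}\cap \Ker(I_1(p))$. The next step is to push this across the quotient $\overline{\eta}^1_p\colon F\to F/[p]_{\Gamma,1}$. Using that $I(\overline{\eta}^1_p)$ is an isomorphism (because $I(I_1(p))=I(p)$, as recalled after the magic diagram \Ref{magicdiagram}) together with naturality of $\eta$, the equality $\Ker(\eta_{F/[p]_{\Gamma,1}}\circ\overline{\eta}^1_p)=\Ker(\eta_F)$ yields $[F/[p]_{\Gamma,1}]_{\Gamma,0}=[F]_{\Gamma,0}/[p]_{\Gamma,1}$; likewise $\Ker(I_1(p))=\Ker(p)/[p]_{\Gamma,1}$. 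Since $[p]_{\Gamma,1}\subseteq [F]_{\Gamma,0}\cap \Ker(p)$ (as read off from the factorisation $\overline{\mu}^1_p=\mu_A\circ[\pi_2]_{\Gamma,0}\circ k$ displayed before Lemma~\ref{radref}), the quotient distributes over the intersection, yielding the desired $\bigl([F]_{\Gamma,0}\cap\Ker(p)\bigr)/[p]_{\Gamma,1}$.

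For the inductive step $n\geq 2$, I would invoke Lemma~\ref{invargal3} with $i=0$ to write $\Gs_n^{\Gamma,\Pc}(P)=\Dom\bigl(\Gs_{n-1}^{\Gamma_1,\Pc^1}(\rho_0 P)\bigr)$; the induction hypothesis, applied to the derived Galois structure $\Gamma_1$ with its projective class $\Pc^1$ (again of type \Ref{GStypePP} by the results of \S 2.4 and \S 2.6), then gives
\[
\Gs_{n-1}^{\Gamma_1,\Pc^1}(\rho_0 P)\cong \frac{[\Dom^{n-1}(\rho_0 P)]_{\Gamma_1,0}\cap \Ker^{n-1}(\rho_0 P)}{[\rho_0 P]_{\Gamma_1,n-1}}.
\]
The remaining task is to unwind each piece as a subobject of the $1$-extension $\Dom^{n-1}(\rho_0 P)=a_0\colon P_n\to P_{n\setminus\{0\}}$ and then take its domain: $[a_0]_{\Gamma_1,0}=\iota^1([a_0]_{\Gamma,1})$ by definition of $[-]_{\Gamma,1}$, $[\rho_0 P]_{\Gamma_1,n-1}=\iota^1([P]_{\Gamma,n})$ by the formula for $[\rho_i(-)]_{\Gamma_1,n-1}$ recalled in \S2.6, and $\Ker^{n-1}(\rho_0 P)$ has domain $\bigcap_{i=1}^{n-1}\Ker(a_i)$ by pointwise computation of limits. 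The crucial observation is that, since $n\setminus\{0\}\neq 0$ for $n\geq 2$, the object $P_{n\setminus\{0\}}$ is $\Pc$-projective, so $a_0$ admits a section; the split-case identity $[f]_{\Gamma,1}=[A]_{\Gamma,0}\cap \Ker(f)$ recalled just before Lemma~\ref{radref} then gives $[a_0]_{\Gamma,1}=[P_n]_{\Gamma,0}\cap \Ker(a_0)$. Combining everything yields
\[
\frac{[P_n]_{\Gamma,0}\cap\bigcap_{i=0}^{n-1}\Ker(a_i)}{[P]_{\Gamma,n}}=\frac{[\Dom^n(P)]_{\Gamma,0}\cap\Ker^n(P)}{[P]_{\Gamma,n}}.
\]

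I expect the main obstacle to be the inductive step, specifically the bookkeeping showing that intersections, quotients, and the functor $\Dom$ on $\Ext_\Ec(\Cs)$ commute as required, and the clean identification $[a_0]_{\Gamma,1}=[P_n]_{\Gamma,0}\cap\Ker(a_0)$; it is precisely this last identity that forces the hypothesis that $P$ is a \emph{projective} presentation (rather than just an $n$-extension), since without projectivity of $P_{n\setminus\{0\}}$ the extension $a_0$ need not split and the radical $[a_0]_{\Gamma,1}$ would be strictly smaller than $[P_n]_{\Gamma,0}\cap\Ker(a_0)$.
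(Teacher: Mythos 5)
Your proof is correct and follows essentially the same route as the paper's: induction via Lemma \ref{invargal3} with $i=0$, the induction hypothesis applied to the derived structure $\Gamma_1$ with $\Pc^1$, the unwinding of $\iota^1$, and the split-case identity $[a_0]_{\Gamma,1}=[P_n]_{\Gamma,0}\cap\Ker(a_0)$ obtained from projectivity of $P_{n\setminus\{0\}}$. The only cosmetic difference is in the base case, where the paper packages your ``quotient distributes over the intersection'' step as a cube of pullbacks along $\overline{\eta}^1_p$ (using $p=I_1(p)\circ\overline{\eta}^1_p$ and $\eta_{P_1}=\eta_{I_1[p]}\circ\overline{\eta}^1_p$), which is the same computation.
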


\begin{proof} By induction. Let $n=1$ and let  $P=p\colon P_1\to P_0$. Then, one can check that all the faces in the following cube are pullbacks
\[
\xymatrix@=15pt{
[P_1]_{\Gamma_0} \cap \Ker (p)  \ar@{.>}[dd]_-{\widehat {\overline{\eta}^1_p}}     \ar[dr] \ar[rr] & &\Ker(p)  \ar[dd] \ar[dr] \\
&[P_1]_{\Gamma,0}\ar[dd] \ar[rr] && P_1\ar[dd]^-{\overline{\eta}^1_p}\\
[I_1[p]]_{\Gamma,0}  \cap   \Ker(I_1(p)) \ar[dr] \ar[rr] &&\Ker(I_1(p)) \ar[dr] \\
&[I_1[p]]_{\Gamma,0}  \ar[rr] & &I_1[p]}
\]
Indeed, the right hand  and front faces are pullbacks, since $p=I_1(p)\circ \overline{\eta}^1_p $ and $\eta_{P_1}=\overline{\eta}^1_p\circ \eta_{I_1[p]}$. It follows that the morphism $\widehat {\overline{\eta}^1_p}$ is in $\Ec$ by \hl{E2}, thus a normal epimorphism, and $\Ker(\widehat {\overline{\eta}^1_p}) = \Ker(\overline{\eta}^1_p)$. Finally, one has
\begin{equation*}
\begin{split}
\Gs^{\Gamma,\Pc}_{1} (P) &= \Gal_\Gamma(I_1(p),0)\\
&\cong  [I_1[p]]_{\Gamma,0}  \cap   \Ker(I_1(p))\\
&\cong \dfrac{[P_1]_{\Gamma,0} \cap \Ker (p)}{\Ker(\widehat {\overline{\eta}^1_p})}\\
&= \dfrac{[\Dom(P)]_{\Gamma,0} \cap \Ker(P)}{[P]_{\Gamma,1}}
\end{split}
\end{equation*}

Let $n\geq 2$ and let us assume that  the results holds for all $1 \leq k < n$ and Galois structures of type \Ref{GStypeP}. Then one has 
\begin{equation*}
\begin{split}
\Gs_{n}^{\Gamma,\Pc}(P)&= \Dom (\Gs_{n-1}^{\Gamma_1,\Pc^1}(\rho_0 (P)))  \\
&\cong \Dom \left( \dfrac{ [\Dom^{n-1}(\rho_0(P))]_{\Gamma_1,0} \cap \Ker^{n-1}(\rho_0(P))}{[\rho_0(P)]_{\Gamma_1,n-1}}\right)\\
&\cong    \dfrac{ \Dom\left( \iota^1 [\Dom^{n-1}(\rho_0(P))]_{\Gamma,1} \cap \Ker^{n-1}(\rho_0(P))\right)}{\Dom(\iota^1 [P]_{\Gamma,n})}\\
&\cong    \dfrac{[\Dom^{n-1}(\rho_0(P))]_{\Gamma,1}   \cap \Dom ( \Ker^{n-1}(\rho_0(P)))}{[P]_{\Gamma,n}}
\end{split}
\end{equation*}

Since $\Dom^{n-1}(\rho_0(P))$ is a morphism in $\Pc$ with a projective codomain, it is a split $\Ec$-extension 
and then
\begin{equation*}
\begin{split}
[\Dom^{n-1}(\rho_0(P))]_{\Gamma,1}  \cong & [\Dom( \Dom^{n-1}(\rho_0 (P)))]_{\Gamma,0}  \cap \Ker(\Dom^{n-1}(\rho_0(P)))\\
\cong & [\Dom^{n}(P)]_{\Gamma,0}  \cap \Ker(\Dom^{n-1}(\rho_0(P)))
\end{split}
\end{equation*}
Finally, one has  
\begin{equation*}
\begin{split}
 & [\Dom^{n-1}(\rho_0(P))]_{\Gamma,1}   \cap \Dom ( \Ker^{n-1}(\rho_0(P)))\\
  &\cong[\Dom^n(P)]_{\Gamma,0}  \cap \Ker(\Dom^{n-1}(\rho_0(P))) \cap  \Dom \left( \Ker^{n-1}(\rho_0(P))\right) \\
  &\cong [\Dom^n(P)]_{\Gamma,0}\cap \Ker^n(P).
 \end{split}
\end{equation*}
\end{proof}

\paragraph{Refined formulae} If the composite Galois structure $\Gamma''$ is as in \Ref{GStypeC} and such that $\Gamma$ is of kind \Ref{GStypePP}, then the descriptions of the fundamental groups can be refined, as shown in the following theorem (the case $n=1$ in a restricted context can be found in \cite{DuckertsEveraertGran}). One first establishes some lemmas.
\begin{lemma}\label{coollemma} Let
\[
\xymatrix{
 0 \ar[r] &\iota (A)  \ar[r]  & P \ar[r]^-f & Q \ar[r]& 0
}
\]
be a short exact sequence with $A$ in $\Cs$, $f$ in $\Ec^1$, $P$ in $\Ext_\Ec(\Cs)$ and $Q$ in $\NExt_\Gamma(\Cs)$.
Then one has a short exact sequence 
\[
\xymatrix{
 0 \ar[r]&A  \ar[r]  & \Ker(P) \ar[r]^-{\Ker(f)} & \Ker(Q) \ar[r]& 0
}
\]
with $\Ker(f)$ in $\Ec$, $\Ker(P)$ in $\Cs$ and $\Ker(Q)$ in $\Xs$. Moreover, one has
\[
\overline{\iota (A)}^{\NExt_\Gamma(\Cs)}_{P} =\iota( \overline{A}^\Ys_{\Ker(P)}).
\]
\end{lemma}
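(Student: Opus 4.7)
The plan is to decompose the sequence componentwise, derive the kernel sequence via a pullback argument, and then observe that the closure identity collapses on both sides because the relevant reflection units are already isomorphisms.

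I will first write $P$ and $Q$ as $\Ec$-extensions $p\colon P_1\to P_0$ and $q\colon Q_1\to Q_0$, so that $f=(f_1,f_0)\colon p\to q$ lies in $\Ec^1$. Since the kernel of $f$ in $\Ext_\Ec(\Cs)$ is the arrow $\iota(A)=(A\to 0)$, we have $\Ker(f_0)=0$ and $\Ker(f_1)=A$. Because $f_0\in\Ec\subseteq\RegEpi(\Cs)$ is also mono, it is an isomorphism; and since $f_1=p_2\circ\langle p,f_1\rangle$ is the composite of two $\Ec$-arrows in the double-extension square, (E3) gives $f_1\in\Ec$.

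Next I would verify that the left square of
\[
\xymatrix{
\Ker(p)\ar[r]^{k_p}\ar[d]_{\Ker(f)}&P_1\ar[r]^p\ar[d]^{f_1}&P_0\ar[d]^{f_0}_{\cong}\\
\Ker(q)\ar[r]_{k_q}&Q_1\ar[r]_q&Q_0
}
\]
is a pullback: given $u\colon X\to\Ker(q)$ and $v\colon X\to P_1$ with $f_1v=k_qu$, applying $q$ yields $f_0\circ p\circ v=q\circ f_1\circ v=0$, and $f_0$ being mono forces $p\circ v=0$, so $v$ factors uniquely through $k_p$. Axiom (E2) then promotes $\Ker(f)$ to $\Ec$ as the pullback of $f_1$, and stability of kernels under pullback along the monomorphism $k_q$ gives $\Ker(\Ker(f))\cong\Ker(f_1)=A$. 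This produces the short exact sequence $0\to A\to\Ker(P)\to\Ker(Q)\to 0$ in $\Cs$ with $\Ker(f)\in\Ec$. That $\Ker(Q)\in\Xs$ is the standard fact about $\Gamma$-normal extensions already recorded earlier in the text.

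For the closure identity, by the very definition of the closure operator one has
\[
\overline{\iota(A)}^{\NExt_\Gamma(\Cs)}_P=\ker\bigl(\eta^1_{\Coker(\iota(A))}\circ\coker(\iota(A))\bigr)=\ker(\eta^1_Q\circ f),
\]
and since $Q\in\NExt_\Gamma(\Cs)$ the unit $\eta^1_Q$ is an isomorphism, so this collapses to $\ker(f)=\iota(A)$. Symmetrically, $\overline{A}^{\Ys}_{\Ker(P)}=\ker(\eta_{\Ker(Q)}\circ\Ker(f))$ reduces to $\ker(\Ker(f))=A$ because $\Ker(Q)\in\Xs$. Under the adjunction $\iota\dashv\Ker$, applying $\iota$ to the mono $A\hookrightarrow\Ker(P)$ and composing with the counit $\iota(\Ker(P))\to P$ yields exactly $\iota(A)\hookrightarrow P$, so $\iota(\overline{A}^\Ys_{\Ker(P)})$ and $\overline{\iota(A)}^{\NExt_\Gamma(\Cs)}_P$ coincide as subobjects of $P$.

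The only genuine work is the pullback verification (and the concomitant kernel-stability that identifies $\Ker(\Ker(f))$ with $A$); the closure identity is essentially formal once one recognises that both $\eta^1_Q$ and $\eta_{\Ker(Q)}$ are isomorphisms by hypothesis.
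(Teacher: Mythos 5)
The first half of your argument is sound and is essentially the paper's: exactness of $0\to 0\to P_0\to Q_0\to 0$ makes $f_0$ an isomorphism, the square with vertices $\Ker(P)$, $P_1$, $\Ker(Q)$, $Q_1$ is then a pullback, and hence $\Ker(f)$ lies in $\Ec$ (by (E2), or directly because $f\in\Ec^1$) with $\Ker(\Ker(f))\cong\Ker(f_1)=A$.

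The closure identity is where the proposal fails. You claim that $\overline{A}^{\Ys}_{\Ker(P)}$ reduces to $A$ ``because $\Ker(Q)\in\Xs$''. This conflates $\Xs$ with $\Ys$: that closure is taken with respect to the subcategory $\Ys$ of the \emph{composite} reflection $F\circ I$, so the unit entering its definition is $\theta_{I(\Ker(Q))}\circ\eta_{\Ker(Q)}$, and the hypothesis $\Ker(Q)\in\Xs$ makes $\eta_{\Ker(Q)}$ invertible but says nothing about $\theta_{\Ker(Q)}$. Hence
\[
\overline{A}^{\Ys}_{\Ker(P)}=(\Ker(f))^{-1}\bigl([\Ker(Q)]_{\Gamma',0}\bigr)
\]
is in general strictly larger than $A$ (in the group example of Section 4 it adjoins every element of $\Ker(P)$ having a power in $A$). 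Relatedly, the superscript $\NExt_\Gamma(\Cs)$ on the left-hand side is a slip for $\NExt_{\Gamma''}(\Cs)$: under the literal reading the left-hand side does collapse to $\iota(A)$ exactly as you compute, but then the asserted identity would be false, and if both sides collapsed Theorem \ref{Hopf2} would say nothing beyond Theorem \ref{Hopf1}. The intended argument is a genuine computation: since $Q$ is $\Gamma$-normal, Lemma \ref{Shape-protoadditive-radical} identifies the radical of $Q$ for the derived composite structure as $\iota\bigl(\overline{0}^{\Ys}_{\Ker(Q)}\bigr)$, so by definition of the closure operator
\[
\overline{\iota(A)}^{\NExt_{\Gamma''}(\Cs)}_{P}=f^{-1}\bigl(\iota(\overline{0}^{\Ys}_{\Ker(Q)})\bigr)=\iota\bigl((\Ker(f))^{-1}(\overline{0}^{\Ys}_{\Ker(Q)})\bigr),
\]
and one concludes with the ``openness'' property $(\Ker(f))^{-1}\bigl(\overline{0}^{\Ys}_{\Ker(Q)}\bigr)=\overline{(\Ker(f))^{-1}(0)}^{\Ys}_{\Ker(P)}=\overline{A}^{\Ys}_{\Ker(P)}$, which applies precisely because $\Ker(f)\in\Ec$ --- this is why the first half of the lemma is needed. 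Both ingredients (the description of the radical of a normal extension and the openness of $\Ec$-morphisms for the closure operator) are absent from your proposal.
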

\begin{proof} Let us consider the following commutative diagram
\[
\xymatrix{
&( \Ker(f))^{-1}(\overline{0}_{\Ker(Q)}^{\Ys}) \ar[r]  \ar[d] \ar@{}[rd]|<<<<{\pullback}&  \overline{0}_{\Ker(Q)}^{\Ys}   \ar[d]\\ 
\Ker^2(f) \ar[r] \ar[d]^-i  \ar@/^1pc/[ur]  & \Ker (P) \ar[r]^{\Ker(f)}  \ar[d]  \ar@{}[rd]|-{(*)} & \Ker (Q) \ar[d] \\
 A \ar[r] \ar[d]   & P_1 \ar[d]_{p}  \ar[r]^-{f_1} & Q_1 \ar[d]^q \\ 
0\ar[r]&P_0 \ar[r]_-{f_0}  &Q_0 
}
\]
Since $f$ is in $\Ec^1$, its restriction $\Ker(f)$ to the kernels of $P$ and $Q$ must be in $\Ec$ and, since $q$ is $\Gamma$-normal, $\Ker(Q)$ is in $\Xs$.  But the last row being exact, $f_0$ is an isomorphism and the square $(*)$ is a pullback. This implies that $i$ is an isomorphism. To check the second assertion is easy. It suffices to note that, since $q$ is $\Gamma$-normal, one has $\br_{\Gamma,1}(q)=\iota (\overline{0}_{\Ker(Q)}^{\Ys})$ and
\begin{equation*}
\begin{split}
\overline{\iota(A)}_{P}^{\NExt_{\Gamma''}(\Cs)}&= f^{-1}(\iota (\overline{0}_{\Ker(Q)}^{\Ys})) \\
&= \iota(f_1^{-1}(\overline{0}_{\Ker(Q)}^{\Ys}))\\
& = \iota (( \Ker(f))^{-1}(\overline{0}_{\Ker(Q)}^{\Ys}))\\
&=\iota (\overline{(( \Ker(f))^{-1}(0))}_{\Ker(P)}^{\Ys}) \\
&= \iota ( \overline{A}_{\Ker(P)}^{\Ys}) .
\end{split}
\end{equation*}

\end{proof}

\begin{corollary}\label{coollemma2} Let
\[
\xymatrix{
 0 \ar[r] &\iota^n (A)  \ar[r]  & P \ar[r]^-f & Q \ar[r]& 0
}
\]
be a short exact sequence with $A$ in $\Cs$, $f$ in $\Ec^n$, $P$ in $\Ext_\Ec^n(\Cs)$ and $Q$ in $\NExt_\Gamma^n(\Cs)$.
Then one has a short exact sequence 
\[
\xymatrix{
 0 \ar[r]&A  \ar[r]  & \Ker^n(P) \ar[r]^-{\Ker^n(f)} & \Ker^n(Q) \ar[r]& 0
}
\]
with $\Ker^n(f)$ in $\Ec$, $\Ker^n(P)$ in $\Cs$ and $\Ker^n(Q)$ in $\Xs$.
\end{corollary}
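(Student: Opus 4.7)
I would prove the corollary by induction on $n$. The base case $n=1$ is exactly the first assertion of Lemma \ref{coollemma}.

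For the inductive step, assuming the result holds for $n-1$ (for any Galois structure of type \ref{GStypePP}), I would transport the given short exact sequence along the isomorphism of pairs
\[
\delta_{n-1} \colon (\Ext^n_\Ec(\Cs),\Ec^n) \to (\Ext_{\Ec^{n-1}}(\Ext^{n-1}_\Ec(\Cs)), (\Ec^{n-1})^1).
\]
A direct inspection of the definitions of $\iota^n$ and $\delta_{n-1}$ shows that $\delta_{n-1}(\iota^n(A))$ is the morphism $\iota^{n-1}(A) \to 0$ of $\Ext^{n-1}_\Ec(\Cs)$, i.e.\ the object denoted by $\iota(\iota^{n-1}(A))$ when Lemma \ref{coollemma} is read inside the derived pair $(\Ext^{n-1}_\Ec(\Cs),\Ec^{n-1})$. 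The derived Galois structure $\Gamma_{n-1}$ is of type \ref{GStypeP} by the higher derived Galois structures theorem, so Lemma \ref{coollemma} applies to $\delta_{n-1}$ of the original sequence and delivers a short exact sequence
\[
0 \to \iota^{n-1}(A) \to \Ker(\delta_{n-1}(P)) \to \Ker(\delta_{n-1}(Q)) \to 0
\]
with arrow in $\Ec^{n-1}$, middle in $\Ext^{n-1}_\Ec(\Cs)$, and right in $\NExt^{n-1}_\Gamma(\Cs)$.

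I would then apply the inductive hypothesis to this new sequence. Its hypotheses are met, so one obtains a short exact sequence
\[
0 \to A \to \Ker^{n-1}(\Ker(\delta_{n-1}(P))) \to \Ker^{n-1}(\Ker(\delta_{n-1}(Q))) \to 0
\]
with arrow in $\Ec$, middle in $\Cs$, and right in $\Xs$. Using the identification $\Ker^n(-) \cong \Ker^{n-1}(\Ker(\delta_{n-1}(-)))$ recalled in the excerpt (and naturality of this isomorphism), this becomes exactly the short exact sequence claimed in the statement.

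The main care needed is the bookkeeping about the ambient categories: one must verify that $\delta_{n-1}(Q) \in \NExt_{\Gamma_{n-1}}(\Ext^{n-1}_\Ec(\Cs))$ and that the normalisation of extensions is preserved under $\delta_{n-1}$, both of which follow from the commutation of $I_n$ with $\delta_{n-1}$ established in the theorem on higher derived Galois structures. The remaining steps are routine diagram chases and the identification of iterated kernels.
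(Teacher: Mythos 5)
Your proof is correct and follows exactly the argument the paper leaves implicit: the corollary is stated without proof precisely because it is the $n$-fold iteration of the first assertion of Lemma~\ref{coollemma}, obtained by induction along the isomorphism $\delta_{n-1}$ together with the identification $\Ker^n(-)\cong\Ker^{n-1}(\Ker(\delta_{n-1}(-)))$. Your closing remarks correctly isolate the only points needing verification (that $\delta_{n-1}$ carries the data into a sequence satisfying the hypotheses of the lemma in the derived pair, and that only the first, closure-free, assertion of Lemma~\ref{coollemma} is being iterated), so nothing is missing.
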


\begin{theorem}\label{Hopf2}  Let $1 \leq n$  and $P$ be a $n$-fold $\Pc$-projective presentation. Then 
\[
\Gs^{\Gamma'',\Pc}_n(P)\cong  \dfrac{\overline{([\Dom^n(P)]_{\Gamma,0})}^{\Ys}_{\Dom^n(P)}\cap \Ker^n(P) }{\overline{([P]_{\Gamma,n})}^{\Ys}_{\Ker^n (P)}}
\] 
\end{theorem}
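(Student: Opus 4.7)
The strategy is to apply Theorem~\ref{Hopf1} directly to the composite Galois structure $\Gamma''$, obtaining
\begin{equation*}
\Gs^{\Gamma'',\Pc}_{n}(P) \cong \dfrac{[\Dom^n(P)]_{\Gamma'',0}\cap\Ker^n(P)}{[P]_{\Gamma'',n}},
\end{equation*}
and then to rewrite both radicals in terms of the closure operator induced by $\Gamma''$. It therefore suffices to prove the two identifications
\begin{equation*}
[\Dom^n(P)]_{\Gamma'',0}=\overline{[\Dom^n(P)]_{\Gamma,0}}^{\,\Ys}_{\Dom^n(P)}\quad\text{and}\quad[P]_{\Gamma'',n}=\overline{[P]_{\Gamma,n}}^{\,\Ys}_{\Ker^n(P)}.
\end{equation*}

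The first identification is immediate. Setting $C=\Dom^n(P)$, the cokernel of $[C]_{\Gamma,0}\hookrightarrow C$ is $HI(C)$, and the unit of $\Gamma''$ at $C$ factors as $C\xrightarrow{\eta_C}HI(C)\xrightarrow{\theta_{I(C)}}UFI(C)$; the defining formula for the closure operator then gives $\overline{[C]_{\Gamma,0}}^{\Ys}_C=\ker(\theta_{I(C)}\circ\eta_C)=[C]_{\Gamma'',0}$.

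For the second identification, which is the main obstacle, I would start from the short exact sequence $0\to\iota^n([P]_{\Gamma,n})\to P\xrightarrow{\eta^n_P}I_n(P)\to 0$ in $\Ext^n_\Ec(\Cs)$, provided by the identity $[P]_{\Gamma_n,0}=\iota^n([P]_{\Gamma,n})$ from the higher radical theorem, in which $\eta^n_P$ lies in $\Ec^n$ and $I_n(P)$ in $\NExt^n_\Gamma(\Cs)$. On the one hand, since the composite reflector of $\Gamma''_n$ fixes $\Gamma$-normal extensions (by Theorem~\ref{Caracterisation} together with the idempotence of $I_n$), the $\Gamma''_n$-unit at $P$ factors through $\eta^n_P$, and the definition of the closure operator yields
\begin{equation*}
\overline{\iota^n([P]_{\Gamma,n})}^{\,\NExt^n_{\Gamma''}(\Cs)}_{P}=\ker(\eta^{n,\Gamma''}_P)=[P]_{\Gamma''_n,0}=\iota^n([P]_{\Gamma'',n}).
\end{equation*}
On the other hand, extending the closure identity of Lemma~\ref{coollemma} to $n$-fold extensions (a strengthening of Corollary~\ref{coollemma2} in the same spirit) gives
\begin{equation*}
\overline{\iota^n([P]_{\Gamma,n})}^{\,\NExt^n_{\Gamma''}(\Cs)}_{P}=\iota^n\bigl(\overline{[P]_{\Gamma,n}}^{\,\Ys}_{\Ker^n(P)}\bigr).
\end{equation*}
Applying $\Dom^n$ to the equality of these two expressions concludes the argument. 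The main technical hurdle is precisely the promotion of the closure identity from $n=1$ to general $n$; I would handle it by induction, transporting the base case through the isomorphisms $\rho_i$ and invoking Lemma~\ref{Shape-protoadditive-radical} at each step.
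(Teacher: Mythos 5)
Your proposal is correct in substance and uses the same ingredients as the paper, but it packages them differently, and the reorganisation is genuinely cleaner. The paper proves the refined formula by induction on $n$: the case $n=1$ is done by hand (factorising the $\Gamma''_1$-unit as $h\circ g$ and chasing a pullback diagram using the openness of $\hat g$ for the closure), and for $n\geq 2$ it applies the inductive hypothesis to $\Gamma''_1$ and $\rho_0(P)$, then transports numerator and denominator back down with Lemma~\Ref{coollemma} and Corollary~\Ref{coollemma2}. You instead invoke Theorem~\Ref{Hopf1} for the composite structure $\Gamma''$ once and for all (legitimate, since $\Gamma''$ is of type \Ref{GStypePP}: protoadditive functors preserve pullbacks of type \Ref{special-pullback}, so the composite reflector does) and isolate all the inductive content in the single identity $[P]_{\Gamma'',n}=\overline{[P]_{\Gamma,n}}^{\,\Ys}_{\Ker^n(P)}$, obtained by computing $\overline{\iota^n([P]_{\Gamma,n})}^{\,\NExt^n_{\Gamma''}(\Cs)}_P$ in two ways. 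This buys modularity: the two radical identifications are statements of independent interest, and your $n=1$ case falls out of Lemma~\Ref{coollemma} instead of being recomputed by hand. Two points of care. First, for the factorisation of the $\Gamma''_n$-unit through $\eta^n_P$ the correct reference is Theorem~\Ref{secondderived} (and its higher analogue), which gives $F_n=(F\circ I)_n\circ H_n$ and hence $\eta^{n,\Gamma''}_P=\theta^n_{I_n(P)}\circ\eta^n_P$; Theorem~\Ref{Caracterisation} characterises $\Gamma''$-normal extensions but does not by itself give the unit factorisation. Second, the $n$-fold closure identity you defer to induction is indeed where the work lies: when you transport along $\rho_0$ and apply Lemma~\Ref{coollemma} at the bottom, you must check that $\Ker^{n-1}(\rho_0(Q))$ is a $\Gamma$-normal extension whenever $Q\in\NExt^n_\Gamma(\Cs)$ (the paper sidesteps this by applying the transport only to the canonical sequence ending in $(I_1)_{n-1}\rho_0(P)$, whose $\Ker^{n-1}$ is $\Gamma$-normal by construction). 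Neither point is an obstruction, but both deserve a line in a full write-up.
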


\begin{proof}
Let $n=1$. One decomposes the formula in Theorem \ref{Hopf1}. Clearly, in our case
\[
[\Dom(P)]_{\Gamma'',0}=\overline{([\Dom(P)]_{\Gamma,0})}^{\Ys}_{\Dom(P)}.
\] 
Furthermore  the arrow $\overline{\eta}_p^1$ in the proof of Theorem \ref{Hopf1}, the cokernel of $[P]_{\Gamma'',1}$, can be factorised as $h\circ g$ where $g$ is the cokernel of $[P]_{\Gamma,1}$ and $h$ the cokernel of  $\overline{0}_{\Ker(I_1 (p))}^{\Ys}$. Let us consider the following commutative diagram
 \[
 \xymatrix{
 \hat{g}^{-1}(\overline{0}^{\Ys}_{\Ker(I_1(p))}) \ar[r] \ar[d] & \Ker(p) \ar[r]^-{\ker(p)} \ar[d]_-{\hat g}& P_1\ar[d]^-g\\
\overline{0}_{\Ker(I_1(p))}^{\Ys} \ar[r] \ar[d]& \Ker(I_1(p)) \ar[r]& I_1[p] \ar[d]^-h\\
 0\ar[rr]& & (F\circ I)_1[p] 
 }
 \]
 in which all rectangles are pullbacks. Then one sees that there is an isomorphism 
 \[
 \Ker(\overline{\eta}_p^1)= \Ker(h\circ g)\cong \hat{g}^{-1}(\overline{0}_{\Ker(I_1(p))}^{\Ys})
 \] 
  between the  domains of the normal monomorphisms 
  \[
  \ker(h\circ g)\quad  \text{and}\quad {\hat g}^{-1}({\ker}({(\theta \circ \eta)_{\Ker(I_1(p))})}).
  \] 
  Since $\hat g$ is in $\Ec$, one has the following equalities:
 \begin{equation*}
 \begin{split}
\hat{g}^{-1}(\overline{0}^{\Ys}_{\Ker(I_1(p))}) &= \overline{(\hat{g}^{-1}(0))}^{\Ys}_{\Ker(p)}\\
 &= \overline{(\Ker(\hat g))}^{\Ys}_{\Ker(p)}\\
  &= \overline{(\Ker (g))}^{\Ys}_{\Ker(p)}\\
  &= \overline{([P]_{\Gamma,1})}^{\Ys}_{\Ker(p)}. 
 \end{split}
  \end{equation*}

 Let $n\geq 2$ and let us assume that  the theorem holds for all $1\leq k\leq n-1$ and all Galois structures of type \Ref{GStypeC}.
Then, in particular,
\begin{equation}\label{berkformula}
\Gs^{\Gamma'',\Pc}_n(P) \cong \dfrac{ \Dom \left(   \overline{([\Dom^{n-1}(\rho_0(P))]_{\Gamma_1,0}])}_{\Dom^{n-1}(\rho_0(P))}^{\NExt_{\Gamma''}(\Cs)}  \right) \cap \Dom (\Ker^{n-1}(\rho_0(P))  ) }{\Dom\left( \overline{([\rho_0(P)]_{\Gamma_1,n-1})}^{\NExt_{\Gamma''}(\Cs)}_{\Ker^{n-1} (\rho_0(P))} \right)}
\end{equation}
One has $[\Dom^{n-1}(\rho_0(P))]_{\Gamma_1,0}=\iota([\Dom^{n-1}(\rho_0(P))]_{\Gamma,1})$. Applying Lemma \ref{coollemma} to the short exact sequence
 \[
 \xymatrix{
 \iota([\Dom^{n-1}(\rho_0(P))]_{\Gamma,1}) \ar[r] &  \Dom^{n-1}(\rho_0(P))  \ar[rr]^-{\eta^1_{\Dom^{n-1}(\rho_0(P))}} & &I_1(\Dom^{n-1}(\rho_0(P)))
}
 \]
gives us
\[
 \overline{([\Dom^{n-1}(\rho_0(P))]_{\Gamma_1,0}])}_{\Dom^{n-1}(\rho_0(P))}^{\NExt_{\Gamma''}(\Cs)}  = \iota (\overline{([\Dom^{n-1}(\rho_0(P))]_{\Gamma,1})}^\Ys_{\Ker(\Dom^{n-1}(\rho_0(P)))}).
\]
Since $\Dom^{n-1}(\rho_0(P))$ has a projective codomain (see the proof of Theorem \ref{Hopf1}), the numerator of \Ref{berkformula} can be rewritten as
\begin{equation*}
\begin{split}
& \Dom \left(   \overline{([\Dom^{n-1}(\rho_0(P))]_{\Gamma_1,0}])}_{\Dom^{n-1}(\rho_0(P))}^{\NExt_{\Gamma''}(\Cs)}  \right) \cap \Dom (\Ker^{n-1}(\rho_0(P))  )\\
&\cong \overline{([\Dom^{n-1}(\rho_0(P))]_{\Gamma,1})}^\Ys_{\Ker(\Dom^{n-1}(\rho_0(P)))}\cap \Dom (\Ker^{n-1}(\rho_0(P))  )\\
&\cong \overline{([\Dom^n(P)]_{\Gamma,0})}^\Ys_{\Dom^n(P)}\cap \Ker(\Dom^{n-1}(\rho_0(P))) \cap  \Dom (\Ker^{n-1}(\rho_0(P))  )\\
& \cong \overline{([\Dom^n(P)]_{\Gamma,0})}^\Ys_{\Dom^n(P)}\cap \Ker^n(P).
\end{split}
\end{equation*}
It remains to rewrite the denominator of \Ref{berkformula}. We have to figure out what is the closure of $[\rho_0(P)]_{\Gamma_1,n-1}=\iota( [P]_{\Gamma,n})$ in $\Ker^{n-1}(\rho_0(P))$ with respect to $\NExt_{\Gamma''}(\Cs)$.  Starting from the short exact sequence
\[
\xymatrix{
 \iota^{n-1}([\rho_0(P)]_{\Gamma_1,n-1} )\ar[r] & \rho_0(P)  \ar[rr]^-{(\eta^1)^{n-1}_{\rho_0(P)}} && (I_1)_{n-1}(\rho_0(P))
}
\] 
and applying Corollary \ref{coollemma2} gives us a short exact sequence
\[
\xymatrix{
  [\rho_0(P)]_{\Gamma_1,n-1}  \ar[r] & \Ker^{n-1}(\rho_0(P))  \ar[rr]^-{\Ker^{n-1}((\eta^{1})^{n-1}_{\rho_0(P)})} &&\Ker^{n-1}( (I_1)_{n-1} \rho_0(P))
}
\]
where  $\Ker^{n-1}( (I_1)_{n-1} \rho_0(P))$ is $\Gamma$-normal. Finally, we get by Lemma \ref{coollemma} that 
\begin{equation*}
\begin{split}
\Dom\left( \overline{([\rho_0(P)]_{\Gamma_1,n-1})}^{\NExt_{\Gamma''}(\Cs)}_{\Ker^{n-1} (\rho_0(P))} \right) &=\Dom(\overline{(\iota[P]_{\Gamma,n})}_{\Ker^{n-1}(\rho_0(P))}^{\NExt_{\Gamma''}(\Cs)})  \\
&=  \overline{([P]_{\Gamma,n})}_{\Ker^n(P)}^{\Ys}.
\end{split}
\end{equation*}
\end{proof}

\section{Examples}

\subsection[Groups]{Groups}


As a first example, we consider a composite adjunction of the form
\[
\xymatrix{
\Gp \ar@/^/[r]^-\ab   \ar@{}[r]|-{\perp} & \Ab \ar@/^/[r]^-F \ar@{}[r]|-{\perp} \ar@/^/[l]^-\supseteq & \Fs \ar@/^/[l]^-\supseteq
}
\]
where $\Gp$ is the category of groups, $\Ab$ the category of abelian groups and $\Fs$ the torsion-free part of a \emph{hereditary} torsion theory $(\Ts,\Fs)$ in $\Ab$ ($\Mr$-hereditary for the class $\Mr=\Mono(\Cs)$ of all monomorphisms in $\Cs$). If we choose $\Pc=\Ec=\RegEpi(\Cs)$ and the classes $\Fc$ and $\Gc$ accordingly, we get a situation in  which Theorem \ref{Hopf2} can be applied. 
Let us recall from \cite{EGVdL} that for, $P$ in $\Ext_\Ec^n(\Cs)$, 
 \[
 [P]_{\Gamma,n}=\prod\limits_{I\subseteq n} [\bigcap\limits_{i\in I}\Ker(p_i),\bigcap\limits_{i\notin I}\Ker(p_i) ].
 \]
where the commutator $[-,-]$ is the classical commutator from group theory. We now need to understand what is the closure associated with the subcategory $\Fs$. First let us recall (from \cite{CDT} for instance) that the hereditary torsion theories in $\Ab$ are completely classified. They are in bijection with radicals of the form
\[
{\bf t}_\mathscr{P}(A)=\bigvee\limits_{p\in \mathscr{P}} {\bf t}_p(A)
\]
where $\mathscr{P}$ is a set of prime numbers and 
\[
{\bf t}_p(A)=\{ a\in A\,|\, \exists n\in \Nbb: \mathrm{ord}(a)=p^n \}
\]
Here $\mathrm{ord}(a)$ denotes the order of $a$. Let us fix a set $\mathscr{P}$ and let $\Fs_\mathscr{P}$ be the associated torsion-free subcategory of $\Ab$.

One can prove that the corresponding closure of a normal subgroup $K$  of a group $A$ such that $K \geq [A,A] = [A]_{\Gamma,0}$ is
 \begin{equation}\label{closure-description}
 \begin{array}{rcl}
 \overline{K}^{\Fs_\mathscr{P}}_A &=&\{a\in A\,|\, \exists m \in \langle \mathscr{P}\rangle : a^m \in K\}.
  \end{array}
  \end{equation}
where $\langle \mathscr{P} \rangle$ is the ideal generated by $\mathscr{P}$ in the (commutative) monoid $(\Nbb_0,\cdot ,1)$. 
\begin{lemma} Let $q \colon A\to B$ be a surjective homomorphism with $B$ abelian. One has
\[
q^{-1}(\bt_\Ps(B))=\{ a\in A\,|\, \exists m\in \langle \Ps \rangle: q(a)^m=1\}.
\]
\[
\xymatrix{
q^{-1}(\bt_\Ps(B)) \ar[d]  \ar[r] \ar@{}[dr]|<<<{\pullback} &\bt_\Ps(B) \ar[d]\\
A \ar[r]_-q &B
}
\]
\end{lemma}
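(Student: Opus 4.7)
The plan is to reduce the statement to an explicit description of $\bt_\Ps(B)$ inside the abelian group $B$, after which the formula for $q^{-1}(\bt_\Ps(B))$ is immediate from the definition of preimage.

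First I would unpack the radical $\bt_\Ps = \bigvee_{p\in\Ps} \bt_p$. Since $B$ is abelian, the join in the lattice of normal subgroups is the subgroup generated by the union of the $\bt_p(B)$, which (because each $\bt_p(B)$ is closed under the group operation) equals their sum. I would then prove the key identity
\[
\bt_\Ps(B) = \{\, b\in B \mid \exists m\in \langle \Ps\rangle : b^m = 1 \,\}.
\]
The inclusion $\subseteq$ follows by writing $b = b_1 \cdots b_k$ with $b_i \in \bt_{p_i}(B)$ for some $p_i \in \Ps$; each $b_i$ has order $p_i^{n_i}$, and then $b^m = 1$ for $m = \prod p_i^{n_i} \in \langle \Ps \rangle$, using commutativity of $B$. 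For $\supseteq$, if $b^m = 1$ with $m = \prod_{p\in\Ps} p^{k_p} \in \langle \Ps \rangle$, the order of $b$ divides $m$, so it factors as a product of prime powers with primes in $\Ps$; the primary decomposition of the finite cyclic group $\langle b \rangle$ then writes $b$ as a product of elements of prime-power order for primes in $\Ps$, each of which lies in the corresponding $\bt_p(B) \subseteq \bt_\Ps(B)$.

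Having established this, the lemma is just a chase of definitions: $a \in q^{-1}(\bt_\Ps(B))$ iff $q(a) \in \bt_\Ps(B)$ iff there exists $m \in \langle \Ps\rangle$ with $q(a)^m = 1$. The accompanying pullback square is simply a restatement of the definition of the preimage and requires no further argument.

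No real obstacle is anticipated; the only point where care is needed is the $\supseteq$ direction of the identity for $\bt_\Ps(B)$, where one must invoke the primary decomposition of the cyclic subgroup $\langle b \rangle$ rather than trying to decompose $b$ directly in $B$. Commutativity of $B$ is used crucially both to make the join into a sum and to guarantee that $(b_1\cdots b_k)^m = b_1^m \cdots b_k^m$.
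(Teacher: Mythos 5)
Your proof is correct, and your $\subseteq$ direction coincides with the paper's. Where you genuinely diverge is in the $\supseteq$ direction: you first isolate the clean characterisation $\bt_\Ps(B)=\{b\in B \mid \exists m\in\langle\Ps\rangle : b^m=1\}$ and prove its hard inclusion by passing to the finite cyclic group $\langle b\rangle$ and invoking its primary decomposition, so that $b$ factors as a product of elements of prime-power order for primes in $\Ps$. The paper instead argues by induction on $m\in\langle\Ps\rangle$ directly at the level of the preimage: it writes $m=m'\cdot p^l$ with $p\in\Ps$ and $m'\in\langle\Ps\setminus\{p\}\rangle$ coprime to $p$, uses Bezout to get $c'm'+cp^l=1$, observes that $q(a)^{m'}\in\bt_p(B)$ and $q(a^{p^l})\in\bt_\Ps(B)$ by the inductive hypothesis, and recombines via $q(a)=(q(a)^{m'})^{c'}(q(a)^{p^l})^{c}$. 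The two arguments are close cousins (primary decomposition of a cyclic group is itself a Bezout/CRT argument), but yours buys a reusable explicit description of $\bt_\Ps(B)$ as the subgroup of elements of $\Ps$-torsion order, after which the lemma and the pullback square are pure definition-chasing, while the paper's stays entirely elementary and self-contained, never appealing to the structure theory of finite abelian groups. One small point of care, which you implicitly get right and which the paper's own decomposition step confirms: $\langle\Ps\rangle$ must be read as the multiplicative submonoid generated by $\Ps$ (products of powers of primes in $\Ps$), not as the set of all multiples of elements of $\Ps$; otherwise your step ``the order of $b$ divides $m$, hence involves only primes in $\Ps$'' would fail.
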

\begin{proof} ($\subseteq$:) Let $a$ be in $q^{-1}(\bt_\Ps(B))$, that is $a\in A$ such that 
\[
q(a)=\prod_{i=1}^k b_i 
\]
for some $b_i \in \bt_{p_i}(B)$ and $p_i\in \langle \Ps \rangle$. Then, there exist $l_i\in \Nbb$ such that $b_i^{p_i^{l_i}}=1$, and, for $m=\prod_{i=1}^{k}p_i^{l_i} (\in \langle \Ps \rangle)$,
\[
q(a)^m=\prod_{i=1}^{k}b_i^m=\prod_{i=1}^{k}(b_i^{p_i^{l_i}} )^{\prod_{j\neq i}p_j^{l_j} }=1. 
\]
($\supseteq$:) We prove by induction that for all $m$ in $\langle \Ps \rangle$,
\[
\{ a\in A \,|\, q(a)^m=1\}\subseteq q^{-1}(\bt_\Ps(B)).
\] 
Since $\bt_\Ps(B)$ is a normal subgroup of $B$, the inclusion holds for $m=1$. Now, let $m>1$ be in $\langle \Ps \rangle$ and let us decompose it as
\[
m=m'\cdot p^l
\]
with $p\in \Ps$, $l\in \Nbb$ and $m'\in \langle \Ps \setminus \{p\} \rangle(\subseteq \langle \Ps \rangle)$ such that $1 \leq m'<m$. One knows from Bezout's theorem that one can find some $c',c \in \Zbb$ such that
\[
c'\cdot m'+c\cdot p^l=1.
\]
For $a\in A$ such that $q(a)^m=1$, one has
\[
(q(a)^{m'})^{p^l}=q(a)^{m'\cdot p^l}=1
\]
and
\[
q(a^{p^l})^{m'}=(q(a)^{p^l})^{m'}=q(a)^{m'\cdot p^l}=q(a)^m=1,
\]
so that, by definition,
\[
q(a)^{m'}\in \bt_p(B)\subseteq \bt_\Ps(B)
\]
and, by induction,
\[
q(a^{p^l}) \in \bt_\Ps(B).
\]
Hence,
\[
q(a)=q(a)^{c'\cdot m'+c\cdot p^l}=\underbrace{(q(a)^{m'})^{c'}}_{\in \, \bt_\Ps(B)}\cdot \underbrace{(q(a)^{p^l})^c}_{\in \, \bt_\Ps(B)} \in \bt_\Ps(B)
\]
and $a$ is in $q^{-1}(\bt_\Ps(B))$. 
\end{proof}

For $A$ a group and $P$ any $n$-fold $\Pc$-projective presentation of $A$, the formulae
\[
\pi_n(A) \cong \dfrac{\overline{([P_n]_{\Gamma,0})}^{\Fs_\Ps}_{P_n}\cap \Ker^n(P) }{\overline{([P]_{\Gamma,n})}^{\Fs_\Ps}_{\Ker^n (P)}}
\]
in Theorem \ref{Hopf2} become 
\[
\pi_n(A)\cong \dfrac{\{ k\in \Ker^n (P) \,|\,  \exists m \in \langle \mathscr{P} \rangle: k^m \in [P_n,P_n]  \}}{\{ k\in \Ker^n( P) \,|\,  \exists m \in \langle \mathscr{P}\rangle : k^m \in \prod\limits_{I\subseteq n} [\bigcap\limits_{i\in I}\Ker(p_i),\bigcap\limits_{i\in n\setminus I}\Ker(p_i) ] \}}.
\]
For $\mathscr{P}=\emptyset$, the Brown-Ellis formulae  \cite{Brown-Ellis} for the integral homology of a group are recovered:
\[
\mathsf{H}_{n+1}(A,\Zbb) \cong \dfrac{[P_n,P_n]\cap \Ker^n (P)}{\prod\limits_{I\subseteq n} [\bigcap\limits_{i\in I}\Ker(p_i),\bigcap\limits_{i\in n\setminus I}\Ker(p_i) ]} \cong \pi_n(A).
\]

\subsection[Topological groups]{Topological groups}


We consider here the adjunctions

\[
\xymatrix@=30pt{
\Gp(\Top)  \ar@/^/[r]^-\ab   \ar@{}[r]|-{\perp} & \Ab(\Top) \ar@/^/[r]^-F \ar@{}[r]|-{\perp} \ar@/^/[l]^-\supseteq & \Ab(\Haus) \ar@/^/[l]^-\supseteq}
\]
where $\Ab(\Haus)$ is the category of Hausdorff abelian topological groups. The functor $\ab$ sends a topological group $G$ on $G/[G,G]$ with the quotient topology and $F$ sends an abelian topological group $G$ on $G/\overline{\{0\}}_G$ where $\overline{\{0\}}_G$ is the topological closure of the trivial subgroup. If we fix $\Ec=\RegEpi(\Gp(\Top))$ and $\Pc$ the class of morphisms in $\Gp(\Top)$ which are split in the category $\Top$, we are in a position to apply our Theorem \ref{Hopf2} . For $(K,k)$ a normal subobject of a topological group $A$ such that $K \geq [A,A]$, one finds that
\[
\overline{K}^{\Ab(\Haus)}_A=\overline{K}^\Top,
\]
the topological closure of $K$ in $A$. Obviously, the inequality 
\[
\overline{K}^\Top \leq {q_K}^{-1}(\overline{0}^{\Ab(\Haus)}_{A/K})=\overline{K}^{\Ab(\Haus)}_A
\]
holds since $\coker(k)=q_K\colon A \to A/K$ is continuous. The converse inequality 
\[
q_K^{-1}(\overline{0}^{\Ab(\Haus)}_{A/K})  \leq \overline{K}^\Top
\]
is also valid since the map $q_K$ is open \cite[Proposition 21]{Borceux-Clementino}. Then, for $P$ a $n$-fold $\Pc$-projective presentation of a topological group $A$, we have
\[
\pi_n(A)\cong \dfrac{\overline{[P_n,P_n]}^{\Top} \cap \Ker^n(P)}{\overline{\prod\limits_{I\subseteq n}[\bigcap\limits_{i\in I} \Ker(p_i),\bigcap\limits_{i\notin I} \Ker(p_i)] }^{\Top}}.
\]

  
\subsection{Torsion theories}


Let us consider an adjunction of the form
\[
\ad{\Cs}{\Fs}{F}{\supseteq}
\]
 where $\Cs$ is an descent-exact homological category and $\Fs$ the torsion free part of a torsion theory in $\Cs$. Let us assume that the torsion theory is $\Mr$-hereditary for a class $\Mr$ that contains the protosplit monomorphisms and, moreover, that $\Cs$ has enough projective objects with respect to a class $\Pc \subseteq \Ec =\RegEpi(\Cs)$. Then, for $P$ a  $n$-fold $\Pc$-projective presentation of an object $A$ of $\Cs$, one obtains: 
 \[
\pi_n(B)\cong  \dfrac{\overline{0}^{\Fc}_{P_n}\cap \Ker^n(P)}{\overline{0}_{\Ker^n(P)}^{\Fc}}   = \dfrac{\br (P_n)\cap \Ker^n(P)}{\br (\Ker^n(P))} 
 \]
This expression becomes trivial when the torsion subcategory is closed under normal subobjects. In fact, if $\Mr$ is a pullback stable class of monomorphisms containing the normal monomorphisms, then one finds that $f^{-1}(\br Y)=\br X$ for every morphism $f\colon X\to Y$ in $\Mr$ (this is a modified version of  \cite[Proposition 3.3 (1)]{CDT}) .
This shows that many torsion theories do not give interesting invariants. However, not all torsion-free subcategories are closed under normal monomorphism. For instance, let us consider an adjunction of the type
 \[
 \xymatrix@=30pt{
\Tbb(\HComp)\ar@/^/[r]^-F  \ar@{}[r]|-{\perp} &  \Tbb(\Prof) \ar@/^/[l]^-\supseteq} 
 \]
where $\Tbb$ is a semi-abelian theory and $\Tbb(\Prof)$ the category of models of this theory in the category of profinite spaces. $\Tbb(\Prof)$ is in fact the torsion-free part of the torsion theory $(\Tbb(\ConnComp),\Tbb(\Prof))$ where $\Tbb(\ConnComp)$ is the category of connected compact Hausdorff $\Tbb$-algebras \cite{BG:Torsion}. The torsion part of the theory is closed under protosplit monomorphisms \cite{Everaert-Gran-TT} so that the reflector $F$ (which sends an algebra $A$ on $A/\Gamma_0(A)$ \cite{Borceux-Clementino}) is protoadditive. Considering the normal subobject $\langle e^{i\frac{\pi}{2}} \rangle$ of $\mathsf{S}^1$ in $\Gp(\HComp)$, one sees that $\Tc=\Gp(\ConnComp)$ is not closed under normal subobjects. So this gives an example of adjunction for  which one obtains (a priori) interesting invariants. The descriptions from Theorem \ref{Hopf2} become in that case
\[
\pi_n(B)\cong \dfrac{\Gamma_0(P_n) \cap \Ker^n(P)}{\Gamma_0(\Ker^n(P))}.
\]
Similar results can be obtained for any regular epi-reflection of an almost abelian category into one of its full replete subcategory.

\section*{Acknowledgments}

The author wishes to thank  Maria Manuel Clementino and Marino Gran for their useful comments on the text.

\bibliography{tim}
\bibliographystyle{amsplain}

\end{document}